\newcommand{\urltilde}{\kern -.15em\lower .7ex\hbox{~}\kern .04em}  
\newcommand{\rk}{\mathrm rk}
\newcommand{\sm}{\mathop{\mathrm{sm}}\nolimits}
\newcommand{\lie}{\mathop{\mathrm{Lie}}\nolimits}
\newcommand{\pr}{\mathop{\mathrm{pr}}\nolimits}
\newcommand{\Sp}{\mathop{\mathrm{Sp}}\nolimits}
\newcommand{\Hilb}{\mathop{\mathrm{Hilb}}\nolimits}
\newcommand{\ind}{\mathop{\mathrm{Ind}}\nolimits}
\newcommand{\Proj}{\mathop{\mathrm{Proj}}\nolimits}
\newcommand{\Ad}{\mathop{\mathrm{Ad}}\nolimits}
\newcommand{\ad}{\mathop{\mathrm{ad}}\nolimits}
\newcommand{\im}{\mathop{\mathrm{Im}}\nolimits}
\newcommand{\spec}{\mathop{\mathrm{Spec}}\nolimits}  
\newcommand{\tr}{\mathop{\mathrm{tr}}\nolimits}
\newcommand{\Sym}{\mathop{\mathrm{Sym}}\nolimits}
\newcommand{\Der}{\mathop{\mathrm{Der}}\nolimits} 
\newcommand{\Id}{\mathop{\mathrm{Id}}\nolimits} 
\newcommand{\Red}{\mathop{\mathrm{Red}}\nolimits} 
\newcommand{\Vect}{\mathop{\mathrm{Vect}}\nolimits}
\newcommand{\gr}{\mathop{\mathrm{gr}}\nolimits}
\newcommand{\Hom}{\mathop{\mathrm{Hom}}\nolimits}
\newif\if@borderstar
   \def\bordermatrix{\@ifnextchar*{%
       \@borderstartrue\@bordermatrix@i}{\@borderstarfalse\@bordermatrix@i*}%
   }
   \def\@bordermatrix@i*{\@ifnextchar[{\@bordermatrix@ii}{\@bordermatrix@ii[()]}}
   \def\@bordermatrix@ii[#1]#2{%
   \begingroup
     \m@th\@tempdima8.75\p@\setbox\z@\vbox{%
       \def\cr{\crcr\noalign{\kern 2\p@\global\let\cr\endline }}%
       \ialign {$##$\hfil\kern 2\p@\kern\@tempdima & \thinspace %
       \hfil $##$\hfil && \quad\hfil $##$\hfil\crcr\omit\strut %
       \hfil\crcr\noalign{\kern -\baselineskip}#2\crcr\omit %
       \strut\cr}}%
     \setbox\tw@\vbox{\unvcopy\z@\global\setbox\@ne\lastbox}%
     \setbox\tw@\hbox{\unhbox\@ne\unskip\global\setbox\@ne\lastbox}%
     \setbox\tw@\hbox{%
       $\kern\wd\@ne\kern -\@tempdima\left\@firstoftwo#1%
         \if@borderstar\kern2pt\else\kern -\wd\@ne\fi%
       \global\setbox\@ne\vbox{\box\@ne\if@borderstar\else\kern 2\p@\fi}%
       \vcenter{\if@borderstar\else\kern -\ht\@ne\fi%
         \unvbox\z@\kern-\if@borderstar2\fi\baselineskip}%
         \if@borderstar\kern-2\@tempdima\kern2\p@\else\,\fi\right\@secondoftwo#1 $%
     }\null \;\vbox{\kern\ht\@ne\box\tw@}%
   \endgroup
   }
\newtheorem{theorem}{Theorem}[section] 
\newtheorem{proposition}[theorem]{Proposition}
\newtheorem{lemma}[theorem]{Lemma}
\newtheorem{example}[theorem]{Example}
\newtheorem{definition}[theorem]{Definition}
\newtheorem{remark}[theorem]{Remark} 
\newtheorem{conjecture}[theorem]{Conjecture} 
\newtheorem{corollary}[theorem]{Corollary}
\newcommand*{\DashedArrow}[1][]{\mathbin{\tikz [baseline=-0.25ex,-latex, dashed,#1] \draw [#1] (0pt,0.5ex) -- (1.3em,0.5ex);}}
\newcommand{\Stab}{\mathop{\mathrm{Stab}}\nolimits}
\newcommand{\I}{\mathop{\mathrm{I}_n}\nolimits}
\begin{document}

\title[Unipotent invariants and the isospectral Hilbert scheme]{Unipotent invariants of filtered representations of quivers and the isospectral Hilbert scheme} 
\author{Mee Seong Im \and Lisa M. Jones}
\address{Department of Mathematical Sciences, United States Military Academy, West Point, NY 10996 USA}
\email{meeseongim@gmail.com}
\address{Department of Mathematical Sciences, United States Military Academy, West Point, NY 10996 USA}
\email{lisa.jones@elmjay.com} 
\address{Department of Pure Mathematics, University of Cambridge, Cambridge, CB3 0WB UK}
\email{lisa.jones@elmjay.com} 
\keywords{Invariants of quiver Grothendieck-Springer resolutions, Hamiltonian reduction of nonreductive groups, isospectral Hilbert scheme}
\date{\today}

\begin{abstract}
Given any finite quiver, we consider a complete flag of vector spaces over each vertex. 
Consider the unipotent invariant subalgebra of the coordinate ring of the filtered quiver representation subspace. 
We prove that the dimension of the algebraic variety of the unipotent invariant subalgebra is finite. 
We also construct an ADHM analog for the Borel subalgebra setting, showing its birationality to the isospectral Hilbert scheme. 
Quiver-graded Steinberg varieties, quantum Hamiltonian reduction, and deformation quantization constructions for the nonreductive setting are discussed, ending with open problems. 
\end{abstract}  
\thanks{Both authors acknowledge Frank D. Grosshans for helpful email communications, and the first author thanks  2016 Knot homologies, Hilbert schemes, and Cherednik algebras workshop at the University of Oregon for discussions on the geometry of the varieties discussed in this manuscript, with a special acknowledgment to Ben Elias, Jacob Rasmussen, Eugene Gorsky, Matt Hogancamp, and Paul Wedrich. The first author is supported by Applications in Computational Engineering Mathematics through Mathematical Sciences Center of Excellence (MSCE) at West Point, NY}

\subjclass[2000]{Primary 14L24, 16G20; Secondary 14C05, 13A50}

\maketitle 

\bibliographystyle{amsalpha} 

\setcounter{tocdepth}{0} 


\section{Introduction}\label{section:intro}  
Parabolic group actions arise naturally in mathematics. For instance, let $B$ be a Borel subgroup of $G = GL_n(\mathbb{C})$ and let $\mathfrak{b}=\lie(B)\subseteq \mathfrak{g} =\lie(G)$ be the Lie algebra of $B$, where $\mathfrak{g}$ is the set of all $n\times n$ matrices over the complex numbers. One could ask to describe the structure of the $B$-orbits on $\mathfrak{b}$, or equivalently, one may be interested in studying the $B$-adjoint action on a complete filtration of an $n$-dimensional complex vector space $V$. 
 
One motivation for our investigation is the connection between $B$-equivariant geometry on $\mathfrak{b}$ and $G$-orbits on the Grothendieck-Springer resolution $\widetilde{\mathfrak{g}}\twoheadrightarrow \mathfrak{g}$. T. Nevins in \cite{Nevins-GSresolutions} (Section 3) shows the isomorphism  between 
$\mathfrak{b}/B\cong \mathfrak{p}/P$ and $\widetilde{\mathfrak{g}}/G$ in terms of Hamiltonian reduction of a parabolic group $P$ acting on $\mathfrak{p}\times V$, where $\mathfrak{p}=\lie(P)$, and of $G\times P$ acting on $G\times \mathfrak{p}\times V$. 
Furthermore, our filtered quiver representations arise as fibers of universal quiver Grassmannians onto quiver Grassmannians or more generally, universal quiver flag varieties onto quiver flags, where the fibers of these projections are homogeneous vector bundles. In fact, M. Reineke in \cite{MR3102955} proved that all projective varieties are quiver Grassmannians. 
This result directly ties into the role that subspaces of quiver representations produce interesting geometric spaces beyond the classical algebraic geometry setting.

Due to the rich geometry interplaying among Hilbert schemes of compactified Jacobians (cf. \cite{Oblomkov-Rasmussen-Shende-HilbScheme-HOMFLY}), 
the representation theory of Cherednik algebras (cf.   \cite{MR3336085}, \cite{MR2210660}, \cite{MR2827838}, \cite{MR2183255}, \cite{MR2219255}),  
Hochschild (co)homology of Soergel bimodules (cf. \cite{MR2339573}), 
the categorification of quiver  
Hecke (Khovanov-Lauda-Rouquier) algebras (cf. \cite{MR2525917}, \cite{MR2763732}, \cite{Khovanov-Lauda-III}, \cite{MR2908731}, \cite{MR2837011}),   to name a few, 
the study of the geometry of the $B$-orbits on $\mathfrak{b}$ are of significant interest in and of itself.  
We also refer the reader to 
\cite{MR1834739}, \cite{MR2838836}, \cite{Ginzburg-Nakajima-quivers}, and Section~\ref{section:geometry} for some background on quivers and the geometry of the (Grothendieck-)Springer resolution. 

We begin the study of the geometry of parabolic subgroups on their subalgebras by generalizing Theorem 1.1 in \cite{Im-filtered-semi-invariants}. Before we restate this theorem, we need to give two definitions. 
Given a quiver $Q$, a quiver path $p$ is a concatenation of arrows in $Q$. If $p$ is a cycle, then $p^m$ is the path composed with itself $m$ times, and we say $p$ is {\em reduced} if $[p]\not=0$ in $\mathbb{C}Q/\langle q^2: q\in \mathbb{C}Q, l(q)\geq 1\rangle$. 
Finally, we define a {\em pathway}  
from vertex $i$ to vertex $j$ as a reduced path from $i$ to $j$.  
Theorem 1.1 in \cite{Im-filtered-semi-invariants} states that a quiver has at most two distinct pathways between any two vertices if and only if the unipotent invariant subalgebra is generated by the corresponding Cartan subalgebra in the filtered quiver representation.  
In this paper, we extend this result to all finite quivers and we refer to Definition~\ref{def:locally-k-Kronecker-quiver} for the definition of local $k$-generalized Kronecker quiver.

\begin{theorem}\label{theorem:finite-dim}
Let $Q$ be a finite quiver with at least one loop or a local $k$-generalized path Kronecker quiver, with $k\geq 2$, at one of the vertices, and let $\beta = (n,n,\ldots,n)\in \mathbb{Z}_{\geq 0}^{Q_0}$ be a dimension vector. For $W\in Rep(Q,\beta)$, let $F^{\bullet}W $ be a complete filtration of vector spaces at each vertex and  
let $\mathbb{U}=U^{\oplus Q_0}$ be the product of maximal unipotent subgroups. 
Then 
\begin{enumerate}
\item\label{item:conclusion-one} the vector subspace $F^{\bullet}Rep(Q,\beta)$ contains a point $p$ such that $\Stab_{\mathbb{U}}(p)=\{ \I \}$, 
\item\label{item:conclusion-two} $\dim_{\mathbb{C}} F^{\bullet}Rep(Q,\beta)/\!\!/ \mathbb{U} = \dim_{\mathbb{C}} F^{\bullet}Rep(Q,\beta) - \dim_{\mathbb{C}} \mathbb{U}$. 
\end{enumerate} 
\end{theorem}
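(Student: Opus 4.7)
The plan is to reduce (2) to (1) by a standard orbit-dimension argument for unipotent groups, and then to prove (1) by constructing an explicit point $p\in F^\bullet Rep(Q,\beta)$ whose $\mathbb{U}$-stabilizer is trivial, using the loop or Kronecker hypothesis at one vertex and propagating along a spanning tree.

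For the implication $(1)\Rightarrow(2)$, observe that $F^\bullet Rep(Q,\beta)$ is an affine space on which the unipotent group $\mathbb{U}$ acts, so every orbit is closed and isomorphic to $\mathbb{U}/\Stab_{\mathbb{U}}(p)$. Given one point with trivial stabilizer, upper semicontinuity of $p\mapsto\dim\Stab_{\mathbb{U}}(p)$ forces the generic stabilizer to be zero-dimensional; since every closed algebraic subgroup of the unipotent group $\mathbb{U}$ is connected in characteristic zero, the generic stabilizer is trivial. Hence the generic orbit has dimension $\dim_\CC\mathbb{U}$, and the Krull dimension of the invariant ring (equivalently, the transcendence degree of its field of fractions over $\CC$) equals $\dim_\CC F^\bullet Rep(Q,\beta)-\dim_\CC\mathbb{U}$.

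To prove (1), fix standard flags at every vertex so that each $W_a$ is upper triangular and $\mathbb{U}$ acts by $u\cdot W_a=u_{t(a)}W_au_{s(a)}^{-1}$, and fix a vertex $v$ witnessing the hypothesis. In the loop case, take $W_\ell=\diag(\lambda_1,\ldots,\lambda_n)$ with pairwise distinct $\lambda_k\in\CC^\times$; the equation $u_vW_\ell=W_\ell u_v$ yields $(u_v)_{ij}(\lambda_j-\lambda_i)=0$ for every $(i,j)$, so $u_v$ is diagonal and, being upper unipotent, equals $\I$. In the local $k$-Kronecker case with $k\ge 2$, pick two distinct pathways from $v$ to some vertex $v'$ (possibly $v'=v$); since these are distinct reduced quiver paths, there is an arrow $a_0$ appearing in one of them but not in the other. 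Assign $I_n$ to every arrow appearing in either pathway except $a_0$, and assign $\diag(\lambda_1,\ldots,\lambda_n)$ (distinct $\lambda_k\in\CC^\times$) to $a_0$. The composite maps along the two pathways are then $I_n$ and $\diag(\lambda_1,\ldots,\lambda_n)$ respectively, so the resulting stabilizer equations at the endpoints reduce to $u_{v'}=u_v$ and $u_{v'}\,\diag(\lambda_k)=\diag(\lambda_k)\,u_v$, forcing $u_v=u_{v'}=\I$ by the entrywise calculation above. Assuming $Q$ is connected (otherwise the hypothesis is applied componentwise), choose a spanning tree $T$ of the underlying undirected graph of $Q$ containing every arrow already assigned, and set $W_a=I_n$ for every remaining arrow of $T$. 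Then $u_{t(a)}=u_{s(a)}$ along each tree edge, so $u_v=\I$ propagates to $u_w=\I$ for every vertex $w$; arrows outside $T$ may be assigned any upper triangular value, giving a point $p$ with $\Stab_{\mathbb{U}}(p)=\{\I\}$.

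The main obstacle I anticipate is the path version of the Kronecker case: two pathways from $v$ to $v'$ may share intermediate vertices and arrows, and one must check that the single exceptional arrow $a_0$ truly distinguishes the two composite maps without conflicting with identity assignments elsewhere along the shared subpath. This is handled above by insisting that $a_0$ lie outside the first pathway; intermediate vertices visited by the pathways are subsequently covered by the spanning-tree propagation, and the propagation equation $u_{t(a)}=u_{s(a)}$ never contradicts the assignments made in the Kronecker step.
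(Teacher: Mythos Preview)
Your strategy is the same as the paper's: anchor $u_v=\I$ at one vertex using the loop or Kronecker data, then push $u_w=\I$ outward along arrows with invertible values; and your $(1)\Rightarrow(2)$ is precisely the content of the Grosshans corollary the paper cites. Your choice of point is in fact tidier than the paper's---you use $I_n$ on most arrows and a single diagonal matrix with distinct nonzero entries, whereas the paper imposes pairwise-distinct nonzero eigenvalue and $2\times 2$ determinant conditions on \emph{every} arrow and then runs a superdiagonal induction (and a separate computation, Proposition~\ref{prop:k-Kronecker-setting}--\ref{prop:k-path-Kronecker-setting}, for the Kronecker step). Your entrywise commutation argument gets to $u_v=\I$ in one line.

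There is, however, a genuine slip in the propagation step. You ask for a spanning tree $T$ \emph{containing every arrow already assigned}, but this is impossible in both cases: a self-loop lies in no spanning tree, and in the Kronecker case the two pathways together contain an undirected cycle, so no tree can contain all of their arrows. The repair is immediate and is essentially how the paper propagates: drop that containment requirement, take any spanning tree, and assign $I_n$ to each tree edge not already assigned. Every value you have placed on an arrow---whether $I_n$ or $\diag(\lambda_1,\dots,\lambda_n)$ with each $\lambda_k\neq 0$---is invertible, so the stabilizer equation $u_{t(a)}W_a u_{s(a)}^{-1}=W_a$ on a tree edge $a$ gives $u_{t(a)}=W_a\,u_{s(a)}\,W_a^{-1}$; hence $u_{s(a)}=\I$ forces $u_{t(a)}=\I$, and you can walk outward from $v$ along $T$. (The paper states exactly this: ``since the eigenvalues of $W(b_j)$ are nonzero, $W(b_j)$ is invertible \ldots\ thus $u^{(\nu_j)}=\I$.'') With this correction your argument goes through. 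One small side remark: your parenthetical ``possibly $v'=v$'' cannot occur, since the local path Kronecker definition requires two distinct vertices and cycle-free paths.
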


\begin{theorem}\label{theorem:fin-dim-all-quivers}
Let $Q$ be a finite quiver and let $\beta =  (n,n,\ldots,n)\in \mathbb{Z}_{\geq 0}^{Q_0}$. For $W\in Rep(Q,\beta)$, let $F^{\bullet}W $ be a complete filtration of vector spaces at each vertex and  
let $\mathbb{U}=U^{\oplus Q_0}$. 
Then  
$\dim_\mathbb{C} \spec(\mathbb{C}[F^{\bullet}Rep(Q,\beta)]^{\mathbb{U}})$ is finite. 
\end{theorem}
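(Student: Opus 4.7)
The plan is to bound the Krull dimension of $R := \mathbb{C}[F^{\bullet}Rep(Q,\beta)]^{\mathbb{U}}$ above by the finite integer $N := \dim_{\mathbb{C}} F^{\bullet}Rep(Q,\beta)$, without requiring that $R$ itself be finitely generated over $\mathbb{C}$. The ambient space $F^{\bullet}Rep(Q,\beta)$ is a product over the arrows $a \in Q_1$ of copies of the upper-triangular matrices $\mathfrak{b}_n$, so the ambient coordinate ring $A := \mathbb{C}[F^{\bullet}Rep(Q,\beta)]$ is a polynomial ring in $N$ variables over $\mathbb{C}$; the linear action of $\mathbb{U}$ realizes $R$ as an integral subdomain of $A$.

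The argument then proceeds in two steps. First, the inclusion $R \hookrightarrow A$ induces an inclusion of fraction fields, yielding
\[
\text{trdeg}_{\mathbb{C}} \mathrm{Frac}(R) \;\leq\; \text{trdeg}_{\mathbb{C}} \mathrm{Frac}(A) \;=\; N.
\]
Second, I will invoke the standard inequality $\dim R \leq \text{trdeg}_{\mathbb{C}} \mathrm{Frac}(R)$, valid for any integral domain $R$ containing $\mathbb{C}$. The proof is a chain-of-primes argument: along any strictly ascending chain $0 = \mathfrak{p}_0 \subsetneq \mathfrak{p}_1 \subsetneq \cdots \subsetneq \mathfrak{p}_m$ in $R$, the transcendence degrees $\text{trdeg}_{\mathbb{C}} \mathrm{Frac}(R/\mathfrak{p}_i)$ strictly decrease in $i$, because a nonzero element of $\mathfrak{p}_{i+1}/\mathfrak{p}_i$ inside the domain $R/\mathfrak{p}_i$ would otherwise satisfy a minimal polynomial over any transcendence basis whose constant term (viewed as a polynomial in that element) both is nonzero and vanishes modulo $\mathfrak{p}_{i+1}/\mathfrak{p}_i$, a contradiction. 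Combining the two steps gives $\dim_{\mathbb{C}} \spec(R) \leq N < \infty$.

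The main expected obstacle is that $F^{\bullet}Rep(Q,\beta)$ is merely a $\mathbb{B}$-variety and not a $\mathbb{G} = GL_n^{\oplus Q_0}$-variety, since the full reductive $\mathbb{G}$-action does not preserve the chosen flag. This blocks a direct appeal to Had\v{z}iev's theorem, which in the $\mathbb{G}$-variety setting would deliver the strictly stronger conclusion that $R$ is finitely generated over $\mathbb{C}$ (an instance of the parabolic Hilbert 14th problem). Theorem~\ref{theorem:fin-dim-all-quivers} sidesteps this by asserting only finiteness of Krull dimension, which the transcendence-degree argument delivers unconditionally, with no hypothesis on loops or local Kronecker structure at a vertex and independently of Theorem~\ref{theorem:finite-dim}.
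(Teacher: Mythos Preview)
Your argument is correct and takes a genuinely different route from the paper. The paper derives Theorem~\ref{theorem:fin-dim-all-quivers} as a corollary of a case split: if $Q$ has a loop or a local $k$-generalized path Kronecker subquiver ($k\ge 2$), Theorem~\ref{theorem:finite-dim} supplies a point with trivial $\mathbb{U}$-stabilizer and hence, via Grosshans' Corollary~\ref{cor:Grosshans-dim-count}, the exact value $\dim F^{\bullet}Rep(Q,\beta)-\dim\mathbb{U}$; otherwise the quiver has at most two pathways between any two vertices, and Theorem~1.1 of \cite{Im-filtered-semi-invariants} identifies the invariant ring with $\mathbb{C}[\mathfrak{t}^{\oplus Q_1}]$. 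Either way the dimension is computed explicitly. Your approach instead bounds $\dim R$ above by $\text{trdeg}_{\mathbb{C}}\mathrm{Frac}(R)\le\text{trdeg}_{\mathbb{C}}\mathrm{Frac}(A)=N$, using only that $R$ is a subdomain of a polynomial ring in finitely many variables; the chain-of-primes lemma you sketch (strict drop of transcendence degree along a strict inclusion of primes in a domain over a field) is standard and correctly applied. What the paper's route buys is the precise dimension formula and, in the Kronecker/loop case, the structural fact that a generic $\mathbb{U}$-orbit is free; what your route buys is a one-paragraph proof of the bare finiteness claim that needs no quiver combinatorics, no stabilizer computations, and no appeal to Theorem~\ref{theorem:finite-dim} or to finite generation of $R$. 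In effect you have shown that, as stated, Theorem~\ref{theorem:fin-dim-all-quivers} is much weaker than the machinery the paper deploys to prove it.
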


If $Q$ has no arrows, then there is nothing to show for Theorem~\ref{theorem:fin-dim-all-quivers}. Thus, assuming $Q$ has one or more arrows,   
Theorem~\ref{theorem:fin-dim-all-quivers} follows from Theorem~\ref{theorem:finite-dim} and Theorem 1.1 in \cite{Im-filtered-semi-invariants} as these two results exhaustively cover cases for all quivers with at least one arrow. 
Theorem~\ref{theorem:fin-dim-all-quivers} also shows that 
$\Proj(\oplus_{i\geq 0} \mathbb{C}[\mu_{\mathbb{P}_{\beta}}^{-1}(0)]^{\mathbb{P}_{\beta},\chi^i})$ is a finite dimensional, projective scheme, where $\mu_{\mathbb{P}_{\beta}}:T^*(F^{\bullet}Rep(Q,\beta))\rightarrow \lie(\mathbb{P}_{\beta})^*$ is the moment map for the filtered representation space. 

Secondly, we restrict to a framed $1$-Jordan quiver and prove the existence between the $B$-Hamiltonian reduction of $\mathfrak{b}\times V$ and the isospectral Hilbert scheme 
(see Theorem~\ref{thm:birationality-Y/B-IsoHilb}).   
We also discuss filtered representations in the setting of quiver-graded Steinberg varieties (Section~\ref{section:quiver-Steinberg-var}), quantum Hamiltonian reduction (Section~\ref{section:quantum-Hamiltonian-reduction}), and quantization deformation (Section~\ref{section:deformation-quantization}), providing motivation and open problems in each section. We review symplectic reflection algebras and their spherical subalgebras in Section~\ref{subsection:rational-Cherednik-algebras}.  We recall the construction of spherical subalgebras of Cherednik algebras,which are related to quantized Hamiltonian reduction in the classical setting. 
Using this, we give a conjecture which relates rational Cherednik algebras of type $A$ to the quantized Hamiltonian reduction for the Borel setting.  We end with future directions in Section~\ref{section:applications-future-work}.

\section{Background}\label{section:background}  
 
Although thorough discussions on the algebraic and geometric aspects of quiver representations are given in   
\cite{Brion-rep-of-quivers},  
\cite{Crawley-Boevey-rep-quivers},  
\cite{Ginzburg-Nakajima-quivers},   
\cite{Im-doctoral-thesis}, and  
\cite{MR1315461},  
we will give a few foundational definitions in this section for completeness of this paper.

A quiver $Q=(Q_0, Q_1)$ is a directed graph with a set $Q_0=\{1,2,\ldots, p\}$ of vertices 
and a set $Q_1=\{ a_1,a_2,\ldots, a_q\}$ of arrows, 
which come equipped with two functions: for each arrow  $\stackrel{i}{\bullet}
            \stackrel{a}{\longrightarrow} \stackrel{j}{\bullet}$ from vertex $i$ to vertex $j$, 
            $t:Q_1\rightarrow Q_0$ maps $t(a)=ta=i$ and $h:Q_1\rightarrow Q_0$ maps $h(a)=ha=j$.  
		We will call $t(a)$ the tail of arrow $a$ and $h(a)$ the head of arrow $a$. We say a quiver $Q=(Q_0,Q_1)$ is nontrivial if $|Q_0|\geq 1$,  finite if $|Q_0|<\infty$ and $|Q_1|< \infty$, and connected if the underlying graph is connected.  
Although infinite quivers play important roles (cf. \cite{rep-theory-infinite-quiver}, \cite{Zelikson-Shmuel-AR-quivers}, \cite{Se-jin-Oh}),  
we will assume our quiver is nontrivial, finite, and connected.

We say a vertex in $Q_0$ is a  sink ($+$-admissible) if it is not the head of some arrow of the quiver and the vertex is a source ($-$-admissible) if it is not the tail of some arrow of the quiver.  
A nontrivial path in $Q$ is a sequence $p=a_k\cdots a_2 a_1$ ($k\geq 1$) of arrows which satisfies $t(a_{i+1})=h(a_i)$ for all $1\leq i\leq k-1$; the path $p$ begins at the tail of $a_1$ and ends at the head of $a_k$ and we will write $h(p)=h(a_k)$ and $t(p)=t(a_1)$. The length $l(p)$ of a path $p$ is the number of arrows in the path. If  $p= a_k\cdots a_2 a_1$ is a nontrivial path, then $l(p)= k$, or else, $l(p)=0$.  

We associate a path $e_i$ to each vertex $i$ called the trivial (empty) path whose head and tail are at $i$. The length of an empty path is $0$.  
 If the tail of a nontrivial path equals the head of the path, then the path is said to be a cycle, and we say a quiver is acyclic if it has no cycles. If the nontrivial path is actually a single arrow whose tail equals its head, then the arrow is said to be a loop. 

A dimension vector $\beta$ for $Q$ is an element of $\mathbb{Z}_{\geq 0}^{Q_0}$. A representation $W$ of a quiver $Q$ assigns a vector space $W(i) = W_i$ to each vertex $i\in Q_0$ and a linear map $W(a):W(ta)\rightarrow W(ha)$ to each arrow $a\in Q_1$. A representation $W=(W(i)_{i\in Q_0}, W(a)_{a\in Q_1})$ of $Q$ is finite dimensional if each vector space $W(i)$ is finite dimensional over $\mathbb{C}$. A subrepresentation of a representation $W$ of $Q$ is a subspace $V\subseteq W$ which is invariant under all operators, i.e., $W(a)(V(ta))\subseteq V(ha)$, where $a\in Q_1$.

Now, let $W$ be a representation of $Q$ of dimension vector $\beta\in \mathbb{Z}_{\geq 0}^{Q_0}$. Upon fixing a basis for each finite-dimensional vector space $W(i)$, each $W(i)$ is identified with $\mathbb{C}^{\beta_{i}}$ and each linear map $\xymatrix{ \stackrel{W(ta)}{\bullet} \ar[r]^{W(a)} & \stackrel{W(ha)}{\bullet}}$ may be identified with a $\beta_{ha}\times \beta_{ta}$ matrix. We will thus define the quiver representation space as 
\[ 
Rep(Q,\beta) := \displaystyle{\bigoplus_{a\in Q_1 }\Hom_{\mathbb{C}}(\mathbb{C}^{\beta_{ta}}, \mathbb{C}^{\beta_{ha} })}. 
\]
Points in $Rep(Q,\beta)$ parameterize finite-dimensional representations of $Q$ of dimension vector $\beta$, and classically, there is a natural 
$\displaystyle{\mathbb{G}_{\beta}=\prod_{i\in Q_0} GL_{\beta_i}(\mathbb{C})}$-action on $Rep(Q,\beta)$  
as a change-of-basis; that is, given 
$(g_{\beta_i})_{i\in Q_0}\in \mathbb{G}_{\beta}$ and $W\in Rep(Q,\beta)$, we have 
\[ (g_{\beta_i})_{i\in Q_0}.(W(a))_{a\in Q_1} = (g_{\beta_{ha}}W(a)g_{\beta_{ta}}^{-1} )_{a\in Q_1}. 
\]  
Whenever the composition $pq$ of paths is defined,  we set $W(pq):=W(p)W(q)$, i.e., the representation of a composition of paths is the product of representations of the paths.

We will now discuss the $B$-adjoint action on $\mathfrak{b}$ further.  Let $F^{\bullet}:\{ 0\}\subseteq \mathbb{C}^1\subseteq \mathbb{C}^2 \subseteq \ldots \subseteq \mathbb{C}^n$ be the complete standard filtration of vector spaces in $\mathbb{C}^n$.  Then $\mathfrak{b}$ can be identified with the subspace of linear maps $\mathbb{C}^n\stackrel{f}{\rightarrow} \mathbb{C}^n$ such that $f|_{\mathbb{C}^k}:\mathbb{C}^k\rightarrow \mathbb{C}^k$ for all $k$. Since one must preserve the filtration of vector spaces while changing the basis, we have the $B$-action on the domain and the codomain.   So points of $\mathfrak{b}/B$ correspond to equivalence classes of linear maps preserving the complete standard filtration of vector spaces, where two maps $f$ and $g$ are equivalent if there exists a change-of-basis that will take $f$ to $g$.

We begin by giving the construction of filtered quiver representations in the general setting. 
Let $Q=(Q_0,Q_1)$ be a quiver and let $\beta=(\beta_1,\ldots, \beta_{Q_0})\in \mathbb{Z}_{\geq 0}^{Q_0}$ be a dimension vector.  
Let 
$F^{\bullet}:0\subseteq \mathbb{C}^{\gamma_1}\subseteq \mathbb{C}^{\gamma_2}\subseteq \ldots \subseteq \mathbb{C}^{\beta}$ 
be a filtration of vector spaces such that we have the filtration 
$F_i^{\bullet}:0\subseteq \mathbb{C}^{\gamma_1^i}\subseteq \mathbb{C}^{\gamma_2^i}\subseteq \ldots \subseteq \mathbb{C}^{\beta_i}$ of vector spaces at vertex $i$. 
Let $Rep(Q,\beta)$ be the quiver representation in the classical sense (without the filtration of vector spaces imposed). 
Then $F^{\bullet}Rep(Q,\beta)$ is a subspace of $Rep(Q,\beta)$ whose linear maps preserve the filtration of vector spaces at every level.  
Let $P_i\subseteq GL_{\beta_i}(\mathbb{C})$ be the maximal parabolic group preserving the filtration of vector spaces at vertex $i$. 
Then the product $\mathbb{P}_{\beta}:=\prod_{i\in Q_0}P_i$ of parabolic groups acts on 
$F^{\bullet}Rep(Q,\beta)$ as a change-of-basis.

Now, given a parabolic Lie algebra $\mathfrak{p}=\lie(P)$, a parabolic matrix described above corresponding to a filtration of vector spaces with respect to the standard basis is block upper triangular. A general parabolic matrix has indeterminates along its block diagonal and upper triangular portion of the matrix and zero below the diagonal blocks. We refer to \cite{MR2772068} for a discussion on quiver Grassmannians and quiver flag varieties, which are related to filtered quiver representations but they are not the same. In the next two sections, we describe quiver flag varieties and filtered quiver representations.

\subsection{Quiver flag varieties}\label{subsection:quiver-flag-variety}

In this section, we will discuss the notion of quiver flag varieties, which also appear in the literature as quiver flag manifolds. 
First fixing a dimension vector $\beta=(\beta_1,\ldots, \beta_{Q_0})$, let $\gamma^{(1)}, \ldots, \gamma^{(l)}\in \mathbb{Z}_{\geq 0}^{Q_0}$
be dimension vectors with coordinates 
$\gamma^{(k)}=(\gamma_1^{(k)},\ldots, \gamma_{Q_0}^{(k)})$ 
satisfying $\sum_{k=1}^l \gamma_i^{(k)}\leq \beta_i$ for each $i\in Q_0$. 
Let $\gamma^{(l+1)}\in \mathbb{Z}_{\geq 0}^{Q_0}$ such that  
$\sum_{k=1}^{l+1} \gamma_i^{(k)} = \beta_i$ for each $i\in Q_0$.  
Define $Fl_{\gamma^{\bullet}}(\beta) := \prod_{i\in Q_0} Fl_{\gamma_i^{\bullet}}(\beta_i)$  
to be the product of flag varieties,  
where each $Fl_{\gamma_i^{\bullet}}(\beta_i)$
is the usual flag variety parametrizing flags of subspaces 
\begin{equation}
0 \subseteq U_i^{(1)}\subseteq U_i^{(2)} \subseteq \ldots \subseteq U_i^{(l)} \subseteq W(i) \mbox{ with } \dim U_i^{(k)} = \sum_{u=1}^k \gamma_i^{(u)}. 
\end{equation}
We define the universal quiver flag to be:  
\begin{equation}\label{equation:universal-quiver-flag}
  \begin{aligned}  
  Fl_{\gamma^{\bullet}}^Q &(\beta) :=    \{ (U^{(1)},\ldots, U^{(l)}, W) \in Fl_{\gamma^{\bullet}}(\beta)\times Rep(Q,\beta): 
    0 \subseteq U^{(1)} \subseteq U^{(2)} \subseteq \ldots \subseteq U^{(l)} \subseteq W \mbox{ is } \\
    &\mbox{ a chain of subrepresentations of }  W   
    \mbox{ and } W(a)U_i^{(k)}\subseteq U_j^{(k)}\:\: \forall \: a:i\rightarrow j   \mbox{ and }  
    \forall \: 1\leq k\leq l    \}.   \\ 
    \end{aligned}
 \end{equation}

Consider the two $\mathbb{G}_{\beta}$-equivariant projections $Fl_{\gamma^{\bullet}}^Q(\beta)\stackrel{p_1}{\longrightarrow} Fl_{\gamma^{\bullet}}(\beta)$  and  $Fl_{\gamma^{\bullet}}^Q(\beta)\stackrel{p_2}{\longrightarrow} Rep(Q,\beta)$. 
First, let us view $U_i^{(k)}$ as subspaces in $W(i)$ for each $i\in Q_0$. 
The fiber of $p_1$ over the tuple  $(U^{(1)},\ldots, U^{(l)})$ is  again a homogeneous vector bundle isomorphic to 
\begin{equation}\label{eq:quiver-flag-vector-bundle}   
\bigoplus_{a\in Q_1}  \left(  \bigoplus_{k=1}^{l}\Hom\left(U_{ta}^{(k)}/U_{ta}^{(k-1)}, U_{ha}^{(k)} \right)   \oplus \Hom\left(W(ta)/U_{ta}^{(l)},W(ha)\right)  \right) 
\end{equation}
where $U_{ta}^{(k)}/U_{ta}^{(k-1)}:=\{v\in U_{ta}^{(k)}:v\perp u \:\:\forall u\in U_{ta}^{(k-1)} \}$, 
$W(ta)/U_{ta}^{(l)} := \{ v\in W(ta): v\perp u \:\:\forall u\in U_{ta}^{(l)} \}$, 
and $U_{ta}^{(0)}:=0$. So $p_1$ is flat. 
On the other hand, 
$p_2^{-1}(W)$ parameterizes all flags 
$0 \subseteq U^{(1)} \subseteq U^{(2)} \subseteq \ldots \subseteq U^{(l)} \subseteq W$ of subspaces with prescribed dimension vectors $\gamma^{(k)}$ with each $U^{(k)}$ being a subrepresentation of $U^{(k+1)}$ and $U^{(l)}$ being a subrepresentation of $W$. So $p_2$ is projective.  
The fiber $p_2^{-1}(W)=Fl_{\gamma^{\bullet}}(W)$ of $p_2$ over $W$ is called the {\em quiver flag variety}.

\begin{example}\label{example:quiver-flag-A1}
Consider the $A_1$-Dynkin quiver (this is the quiver whose underlying graph is an $A_1$-Dynkin graph). Let $\beta=n$, $l=n-1$, 
 and $\gamma^{(i)}=i$ for $1\leq i\leq l$. Then the quiver flag variety $Fl_{\gamma^{\bullet}}(\mathbb{C}^n)$ is isomorphic to the complete flag variety of $\mathbb{C}^n$. 
\end{example}  
 
\begin{example}\label{example:quiver-flag-generalizing-grassmannian}
For $Q$ any quiver and $l=1$, we obtain a quiver Grassmannian. 
\end{example}

\begin{example}\label{example:quiver-grassmannian-A1}
Consider the $A_1$-quiver and let $\beta=n$. Then $Rep(Q,\beta)\cong \mathbb{C}^n$, an $n$-dimensional vector space. Now let the dimension vector $\gamma$ be $m \leq n$. We obtain $Fl_{\gamma^{\bullet}}^Q(\beta) = Gr_{m}^{Q}(n) \cong Gr_m(n)\times \mathbb{C}^n$. This means the quiver Grassmannian $Gr_m(\mathbb{C}^n) = Gr_m(n)$ coincides with the classical Grassmannian.  
\end{example}

Now we will investigate the fibers of $p_1$ in Section~\ref{subsection:filtered-quiver-varieties}.

\subsection{Filtered quiver representations}\label{subsection:filtered-quiver-varieties}

Filtered quiver representations are precisely the fibers of $p_1$ over a flag of vector spaces in Section~\ref{subsection:quiver-flag-variety}. 
A more straight-forward construction is as follows:  let $Q$ be any quiver. 
Fix a set of dimension vectors $\gamma^{(1)},\ldots, \gamma^{(l)}, \beta \in \mathbb{Z}_{\geq 0}^{Q_0}$ such that $\sum_{k=1}^{l} \gamma_{i}^{(k)} \leq \beta_{i}$ for each $i\in Q_0$.  Let $F^{\bullet}: 0 \subseteq U^{(1)} \subseteq U^{(2)} \subseteq \ldots \subseteq U^{(l)} \subseteq W$ be a flag of subspaces such that $\dim U_i^{(k)} = \sum_{u=1}^k \gamma_i^{(u)}$ for each $i\in Q_0$.

\begin{definition}\label{definition:filtered-quiver-representation}   
The filtered quiver representation is a vector space (an affine variety) defined as 
\[ 
F^{\bullet}Rep(Q,\beta):= \{ W\in Rep(Q,\beta):
W(U_{ta}^{(k)})\subseteq U_{ha}^{(k)}\:\: \forall \: 1\leq k\leq l, \:\:\forall \:a\in Q_1
\}. 
\] 
If $P_i\subseteq GL_{\beta_i}(\mathbb{C})$ is a parabolic subgroup acting as a change-of-basis while preserving the filtration of vector spaces at vertex $i\in Q_0$, 
then $\mathbb{P}_{\beta}:= \prod_{i\in Q_0}P_i$ naturally acts on $F^{\bullet}Rep(Q,\beta)$.   
\end{definition}  

A filtered quiver representation is a representation space where the filtration is a structure on a representation, not on the quiver itself. We write $\mathbb{U}:= \mathbb{U}_{\beta}$, the unipotent radical of $\mathbb{P}_{\beta}$.  

We define the map from the set in~\eqref{equation:universal-quiver-flag} to the projection onto its second component  
as quiver Grothendieck-Springer resolution. Generalized Grothendieck-Springer resolution is known in the literature as follows:  
$\widetilde{G}_H := G\times_{H,\Ad}H = \ind_H^G(H)$, 
where $H$ is a closed subgroup of a linear algebraic group $G$ over a field $k$ and $H$ acts on $G\times H$ via $h.(g,x) = (gh^{-1},\ad_h(x))$. This implies that $\widetilde{G}_H$ has a $G$-action by $g'.([g,h])=[g'g,h]$. There is an embedding $\pr\times a: \widetilde{G}_H\hookrightarrow G/H\times G$, where  
$[g,h]\mapsto ([g], ghg^{-1})$. The image of $\pr\times a$ consists of $\im(\pr\times a)=\{ ([g],x): \forall g\in G \mbox{ of } [g], x\in H_g := gHg^{-1}\}$. 
The two projections 
\[
\xymatrix@-1pc{ 
& & \ar[lldd]_{\pr} \widetilde{G}_H \ar[rrdd]^a & & \\ 
& & & &  \\ 
G/H& & & & G \\ 
}
\]  
show that for all $[g]\in G/H$, the isomorphism $\pr^{-1}([g])\stackrel{\cong}{\longrightarrow}H_g$ is given through the map $a$ 
(cf. \cite{Kazhdan-Varshavsky}, Section 1.4). 
On the other hand, generalized Springer resolution is known as 
$T^*(G/P)\stackrel{f_{\mathfrak{p}}}{\rightarrow} \overline{\mathcal{O}_{\mathfrak{p}}}$, where $\mathcal{O}_p$ is the Richardson orbit associated to $\mathfrak{p}=\lie(P)$ and $P$ is a parabolic subgroup of a semisimple complex algebraic group $G$; 
 Richardson orbit for $P$ is an open, dense orbit in the nilradical of $\mathfrak{p}$ (cf. \cite{MR732546}).   
 Thus, in this paper, we will often use the terminology quiver (Grothendieck)-Springer resolutions.  
We also refer the reader to Section~\ref{section:quiver-Steinberg-var} for a further discussion of this topic.

If one assumes $Q$ to be the $1$-Jordan quiver, $\beta=n$, and assume a complete filtration of vector spaces on the representation space of $Q$, then we obtain $F^{\bullet}Rep(Q,n)\cong \mathfrak{b}$ under the $U_n$ or $B$ action, where $B$ is the Borel subgroup of $GL_n(\mathbb{C})$, and we have the identification: 
\[ 
F^{\bullet}Rep(Q,n)/\!\!/_{\chi} B \cong \widetilde{\mathfrak{g}}/\!\!/_{\chi}G, \mbox{ where } \widetilde{\mathfrak{g}} = \{ (x,\mathfrak{b})\in \mathfrak{g}\times G/B: x\in \mathfrak{b} \}.  
\]

In this paper, we fix the components of the quiver dimension vector to be a nonnegative integer $n$: $\beta = (n,n,\ldots, n)$ 
and we will also fix the standard basis $\mathbf{e}_1,\ldots, \mathbf{e}_n$ for the filtered representation space all throughout the paper. 
We will assume $F^{\bullet}$ to mean the complete standard filtration of vector spaces at each vertex. 
So without loss of generality, we will make the identifications: 
\begin{equation} 
F^{\bullet}Rep(Q,\beta) \cong \mathfrak{b}^{\oplus Q_1}, \hspace{4mm} 
\mathbb{U}_{\beta} \cong U^{\oplus Q_0}, \hspace{4mm} \mathbb{P}_{\beta} \cong B^{\oplus Q_0},  
\end{equation}
where $\mathfrak{b}$ is the set of upper triangular matrices in $\mathfrak{gl}_n$ and $\mathfrak{b}=\lie(B)$. 
Note when rewriting the direct sum of vector bundles in (\ref{eq:quiver-flag-vector-bundle}) as a product of matrices (one matrix for each arrow $a\in Q_1$), each matrix has the form of an upper triangular matrix.

Next, we give definitions which are critical in the proof of our main result. 
\begin{definition}\label{defn:superdiag-subdiag}
The main diagonal of an $n\times n$ matrix is called {\em level $0$}.   
{\em Level $k$-diagonal} or {\em $k$-superdiagonal} of an $n\times n$ matrix are those entries that are $k$ entries to the right of the main diagonal entries, and 
{\em level $(-k)$-diagonal} or {\em $k$-subdiagonal} of an $n\times n$ matrix are those entries  that are $k$ entries to the left of the main diagonal entries.  
\end{definition}

Note that level $1$-diagonal or $1$-superdiagonal are the matrix entries immediately above the diagonal entries, 
while level $(-1)$-diagonal or $1$-subdiagonal consists of matrix entries immediately below the main diagonal entries. 


The distance between two vertices in a graph is the number of edges in a shortest path connecting them. The notion of the distance is also known as graph geodesic or geodesic distance. If there is no path connecting two vertices (for example, the vertices belong to two different connected components), then we define the distance between them as infinite. 
In the case of a directed graph, the distance $d(i,j)$ between vertices $i$ and $j$ is the length of a shortest path from $i$ to $j$ consisting of arrows or arcs. We note that $d(i,j)$ need not equal $d(j,i)$ and it is possible for only one of the two to be defined. 

\begin{definition}\label{def:uni-matrix-locally-near-vertex}
We say a unipotent matrix representation at vertex $j$ is {\em locally near vertex $i$} if $d(i,j)\leq 1$ or $d(j,i)\leq 1$.  
\end{definition}

\begin{definition}\label{def:locally-k-Kronecker-quiver}   
A {\em $k$-generalized Kronecker quiver} consists of two vertices $i$ and $j$ and $k$ arrows: $a_1,\ldots, a_m:i\rightarrow j$ 
and  $a_{m+1},\ldots, a_k:j\rightarrow i$.  
A {\em $k$-generalized path Kronecker quiver} consists of two vertices $i$ and $j$ and nontrivial paths 
\[ 
\begin{aligned}
a_1^{(1)}\cdots a_{p_1}^{(1)}, \: a_1^{(2)}\cdots a_{p_2}^{(2)},    \ldots, a_1^{(m)}\cdots a_{p_m}^{(m)}&: i\rightarrow j \mbox{ and }    \\ 
b_1^{(1)}\cdots b_{q_1}^{(1)}, \:  b_1^{(2)}\cdots b_{q_2}^{(2)},    \ldots, b_1^{(n)}\cdots b_{q_n}^{(n)}&: j\rightarrow i     \\ 
\end{aligned}
\] 
such that $m + n = k$ and the paths  
$a_1^{(\iota)}\cdots a_{p_{\iota}}^{(\iota)}$  
and $b_1^{(\gamma)}\cdots b_{q_{\gamma}}^{(\gamma)}$  
do not contain a cycle. 
 
A {\em local $k$-generalized Kronecker quiver} at vertex $i$ is a subquiver consisting of two vertices, one vertex $j\not=i$ and vertex $i$, and $k$ arrows $a_1,\ldots, a_m : i\rightarrow j$ and $a_{m+1},\ldots, a_k:j\rightarrow i$.  
A {\em local $k$-generalized path Kronecker quiver} is a subquiver consisting of two vertices $i$ and $j$ and nontrivial paths 
\[ 
\begin{aligned}
a_1^{(1)}\cdots a_{p_1}^{(1)}, \: a_1^{(2)}\cdots a_{p_2}^{(2)},    \ldots, a_1^{(m)}\cdots a_{p_m}^{(m)}&: i\rightarrow j \mbox{ and }    \\ 
b_1^{(1)}\cdots b_{q_1}^{(1)},\:  b_1^{(2)}\cdots b_{q_2}^{(2)},    \ldots, b_1^{(n)}\cdots b_{q_n}^{(n)}&: j\rightarrow i     \\ 
\end{aligned}
\] 
such that $m + n = k$ and the paths  
$a_1^{(\iota)}\cdots a_{p_{\iota}}^{(\iota)}$  
and $b_1^{(\gamma)}\cdots b_{q_{\gamma}}^{(\gamma)}$  
do not contain a cycle. 
\end{definition}

In literature, a star-shaped quiver (of any orientation) has $k$-legs, each of length $s_k$, with no loops on the central vertex, where the underlying graph of a leg of a quiver is an $A_{s_k}$-Dynkin diagram.  

\begin{definition}\label{def:star-shaped-quiver} 
A {\em $1$-step star-shaped quiver} (of any orientation) has $k$-legs, each of length $1$, with no loops on the central vertex. 
A {\em local $1$-step star-shaped quiver} (of any orientation) at vertex $i$ is a subquiver consisting of $k$-legs emanating to or from $i$, each of length $1$, with no loops on the central vertex.  
\end{definition}  
 



\begin{example}\label{example:1-step-star-shaped-quiver}
A $1$-step star-shaped quiver (of any orientation) with $8$ legs is: 
\[ 
\xymatrix@-1pc{
\bullet \ar[rrdd] & \bullet \ar[rdd] & \bullet  & \bullet  \ar[ddl] & \\ 
\bullet  \ar[rrd]& & & &\bullet  \\
\bullet   & &\ar[ll] \bullet \ar[uu]^{} \ar[rru]^{}\ar[rr]^{} & &\bullet.  \\ 
}
\] 
\end{example}

 Finally, we state Corollary 19.6 from \cite{MR1489234}, which will be applied in the proof of Theorem~\ref{theorem:finite-dim}:  
\begin{corollary}[Grosshans]\label{cor:Grosshans-dim-count} 
Let $G$ act linearly on a vector space $V$ and let $v\in V$ such that $\Stab_G(v)=\{ e\}$. 
Then 
\[ 
\dim V/\!\!/G = \dim V - \dim G. 
\] 
\end{corollary}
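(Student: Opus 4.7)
The plan is to prove the equality as two opposite inequalities, starting from the elementary orbit-stabilizer computation. Since $\Stab_G(v) = \{e\}$, the orbit map $G \to G \cdot v$, $g \mapsto g \cdot v$, is an injective morphism of algebraic varieties and induces a bijection $G/\Stab_G(v) = G \to G \cdot v$. Consequently $\dim(G \cdot v) = \dim G$, and the differential at the identity identifies $\lie(G)$ with the tangent subspace $T_v(G \cdot v) \subseteq T_v V = V$.

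For the upper bound $\dim V /\!\!/ G \leq \dim V - \dim G$ I would pass to transcendence degrees over $\mathbb{C}$. The function $w \mapsto \dim \Stab_G(w)$ is upper semicontinuous on $V$, so the locus where the stabilizer is trivial is a nonempty open subset, and every point of that locus has a $G$-orbit of dimension $\dim G$; hence $\max_{w \in V} \dim(G \cdot w) = \dim G$. By Rosenlicht's theorem, the field of rational invariants $\mathbb{C}(V)^G$ has transcendence degree $\dim V - \dim G$ over $\mathbb{C}$. Because $\mathbb{C}[V]^G \hookrightarrow \mathbb{C}(V)^G$, the Krull dimension of $\mathbb{C}[V]^G$ is bounded above by the transcendence degree of its fraction field, which in turn is bounded by $\dim V - \dim G$.

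For the reverse inequality $\dim V /\!\!/ G \geq \dim V - \dim G$ I would construct a slice transverse to the free orbit through $v$. Choosing a linear complement $W \subseteq V$ to $\lie(G) \cdot v$, the action morphism $G \times W \to V$, $(g, w) \mapsto g \cdot (v + w)$, has surjective differential at $(e, 0)$ and is therefore \'etale on an open neighborhood; its image is an open $G$-saturated subset of $V$ on which distinct $G$-orbits through points of $W$ are generically separated. Composition with the quotient morphism $\pi : V \to V /\!\!/ G$ then yields a map $W \to V /\!\!/ G$ that is quasi-finite on a nonempty open subset, forcing $\dim V /\!\!/ G \geq \dim W = \dim V - \dim G$. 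In the unipotent case actually used in the paper, the Kostant--Rosenlicht theorem furnishes the cleaner alternative that $G \cdot v$ is closed in $V$ and coincides with the fiber $\pi^{-1}(\pi(v))$, after which the fiber-dimension theorem closes the argument.

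The main obstacle is the reverse inequality: ensuring that the polynomial, rather than merely rational, $G$-invariants suffice to detect the transverse directions to the orbit. For non-reductive $G$ the ring $\mathbb{C}[V]^G$ need not be finitely generated and the categorical quotient need not separate distinct orbits, so the slice argument requires care; Grosshans' treatment in \cite{MR1489234} handles these subtleties through observable-subgroup techniques. In the unipotent setting the Kostant--Rosenlicht closedness of orbits shortens the argument considerably, and combining the resulting lower bound with the transcendence-degree upper bound yields the desired equality.
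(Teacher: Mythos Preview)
The paper does not prove this statement at all: it is quoted as Corollary~19.6 from Grosshans~\cite{MR1489234} and used as a black box in the proof of Theorem~\ref{theorem:finite-dim}. There is therefore no in-paper argument to compare your proposal against.

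Taken on its own, your sketch follows the standard route---Rosenlicht for the upper bound, an \'etale slice (or Kostant--Rosenlicht in the unipotent case) for the lower bound---and you correctly identify the delicate point, namely that for non-reductive $G$ the ring $\mathbb{C}[V]^G$ need not be finitely generated, so passing from the transcendence degree of $\mathbb{C}(V)^G$ to the Krull dimension of $\mathbb{C}[V]^G$ requires justification, as does the claim that polynomial invariants separate enough orbits for the fiber-dimension argument to go through. Grosshans' own treatment handles these issues via his machinery of observable subgroups and the transfer principle, which is heavier than your direct approach but is what the citation is actually pointing to. In the application the paper cares about, $G=\mathbb{U}$ is unipotent, and there your Kostant--Rosenlicht shortcut is the right instinct.
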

 

\section{Results}\label{section:results} 
Without loss of generality, we will assume $Q$ is connected and has at least one arrow. 

\begin{proposition}\label{prop:A1-Dynkin-quiver}
Let $Q$ be the $A_1$-Dynkin quiver and let $\beta= (n,n)$. 
Let $F^{\bullet}W$ be a complete filtration of vector spaces at the two vertices. Let $\mathbb{U}=U^{\oplus 2}$.   
Then $F^{\bullet}Rep(Q,\beta)/\!\!/\mathbb{U} \cong \mathbb{C}^n$. 
\end{proposition}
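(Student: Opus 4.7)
Using the identifications from the end of Section~\ref{section:background}, the space $F^{\bullet}Rep(Q,\beta)$ becomes a single copy of $\mathfrak{b}$ (corresponding to the unique arrow of $Q$), and $\mathbb{U}=U\times U$ acts by $(u_1,u_2)\cdot M = u_2 M u_1^{-1}$. The plan is to construct a $\mathbb{U}$-invariant morphism $\pi\colon \mathfrak{b}\to \mathbb{C}^n$, $M\mapsto(m_{11},\dots,m_{nn})$, show that its generic fibers are single $\mathbb{U}$-orbits, and then conclude that $\pi$ induces the categorical quotient $\mathfrak{b}/\!\!/\mathbb{U}\cong \mathbb{C}^n$.

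\textbf{Diagonals are invariant; generic orbits equal fibers.} A direct calculation of the $(i,i)$-entry of $u_2 M u_1^{-1}$ shows that the upper triangularity of the three factors forces the only nonzero contribution to be $(u_2)_{ii}\cdot m_{ii}\cdot (u_1^{-1})_{ii} = m_{ii}$, so $\pi$ is $\mathbb{U}$-invariant and $\mathbb{C}[x_1,\dots,x_n]\hookrightarrow \mathbb{C}[\mathfrak{b}]^{\mathbb{U}}$ via $\pi^{\ast}$. Let $V\subset \mathfrak{b}$ be the open subset where the diagonal entries are pairwise distinct. I claim every $M\in V$ lies in the $\mathbb{U}$-orbit of $D:=\mathrm{diag}(m_{11},\dots,m_{nn})$. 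Rewriting $u_2 M u_1^{-1}=D$ as $u_2 M = D u_1$ and comparing entries on the super-diagonal of level $\ell=j-i\geq 1$ yields
\[
d_i (u_1)_{ij} - d_j (u_2)_{ij} \;=\; m_{ij} + \sum_{i<k<j}(u_2)_{ik}\,m_{kj}, \qquad d_i := m_{ii},
\]
whose right-hand side involves only entries $(u_2)_{ik}$ already determined on strictly smaller super-diagonals. Solving inductively in $\ell=1,2,\dots,n-1$, each equation has a solution because distinctness of $d_1,\dots,d_n$ forces $(d_i,d_j)\neq(0,0)$ for $i\neq j$. Hence $\mathbb{U}\cdot M = \pi^{-1}(\pi(M))$ for every $M\in V$.

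\textbf{Conclusion and main obstacle.} Given any $\mathbb{U}$-invariant polynomial $f\in \mathbb{C}[\mathfrak{b}]$, the previous step shows that $f$ is constant on each fiber of $\pi$ over the dense open $\pi(V)\subset \mathbb{C}^n$; therefore $\partial f/\partial m_{ij}$ vanishes on a Zariski-dense subset of $\mathfrak{b}$ for every $i<j$ and, being itself a polynomial, vanishes identically. This forces $f\in \mathbb{C}[m_{11},\dots,m_{nn}]$, so $\mathbb{C}[\mathfrak{b}]^{\mathbb{U}} = \mathbb{C}[m_{11},\dots,m_{nn}]$ and $F^{\bullet}Rep(Q,\beta)/\!\!/\mathbb{U}\cong \mathbb{C}^n$. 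The crux is the fiber-equals-orbit step: Corollary~\ref{cor:Grosshans-dim-count} cannot be invoked directly here, because the generic $\mathbb{U}$-stabilizer is positive-dimensional (of dimension $n(n-1)/2$, as $(u_1,u_2)\cdot D=D$ gives $u_2 = D u_1 D^{-1}$), so the necessary dimension count must be produced by the explicit diagonalization rather than by exhibiting a point with trivial stabilizer.
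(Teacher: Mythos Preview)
Your argument is correct and reaches the same conclusion as the paper, but the route is genuinely different. The paper's proof is purely algebraic: it imposes a total order on the off-diagonal coordinates $x_{ij}$, picks the least one appearing nontrivially in an invariant $f$, and then acts by a one-parameter unipotent subgroup supported in that single $(i,j)$ slot to force a contradiction via a leading-term analysis. No geometric input is used, and no genericity hypothesis on the diagonal is ever invoked.

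Your approach is geometric: you show that over the open locus of pairwise distinct diagonals the $\pi$-fibers coincide with $\mathbb{U}$-orbits (by an explicit inductive diagonalization), and then pass from ``constant on generic fibers'' to ``polynomial in the diagonal entries'' by a density argument on the partial derivatives. This has the advantage of explaining conceptually why only the diagonal survives---generically one can diagonalize---and of exhibiting the orbit structure concretely; the paper's approach, in turn, avoids having to single out a good open set and would adapt more directly to situations where no such transparent normal form is available. Your closing remark that Corollary~\ref{cor:Grosshans-dim-count} is unavailable here because the generic stabilizer has dimension $\binom{n}{2}$ is correct and worth keeping, since it clarifies why this proposition is handled separately from the main theorem.
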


Also see Theorem 5.1.2 in \cite{Im-doctoral-thesis} for a generalization of Proposition~\ref{prop:A1-Dynkin-quiver} to $ADE$-Dynkin quivers.

\begin{proof}
Let $a$ be the arrow in $Q$ connecting the two vertices $1$ and $2$. 
If $1=ta$ and $2=ha$, we will call such orientation the preferred orientation. 
Define 
\begin{equation}
\epsilon(a) = 
\begin{cases} 
1 & \mbox{ if } a \mbox{ is in the preferred orientation}, \\   
0 &\mbox{ otherwise}.     \\    
\end{cases}    
\end{equation}
We identify $F^{\bullet}Rep(Q,\beta)$ with $\mathfrak{b}$. Then for $(u_1,u_2)\in \mathbb{U}$ and $x \in \mathfrak{b}$, 
suppose $(u_1,u_2).x = u_{1+\epsilon(a)} x \:  u_{2-\epsilon(a)}^{-1}$. 
At the level of functions, we have 
$(u_1, u_2).f(x)=f(u_{1+\epsilon(a)}^{-1}  x \: u_{2-\epsilon(a)})$. 
We will prove that $\mathbb{C}[\mathfrak{b}]^{\mathbb{U}} \cong \mathbb{C}[\mathfrak{t}]$, where $\mathfrak{t}$ is the Cartan subalgebra of $\mathfrak{b}$. The inclusion $\mathbb{C}[\mathfrak{t}]\subseteq \mathbb{C}[\mathfrak{b}]^{\mathbb{U}}$ is clear so we will prove the other inclusion. 

Fix a total ordering $\leq$ on pairs $(i,j)$, where $1\leq i\leq j\leq n$, by defining 
\begin{center} 
$(i,j)\leq (i',j')$ if either 
$i < i'$ or $i=i'$ and $j>j'$ 
\end{center}
and let us write $x=(x_{ij}) \in \mathfrak{b}$. 
Let $f\in \mathbb{C}[\mathfrak{b}]^{\mathbb{U}}$. 
Then for each $(i,j)$, the function $f$ can be rewritten as: 
\begin{equation}
f  = \sum_{k\geq 0}^{} x_{ij}^k f_{ij,k}, \mbox{ where }f_{ij,k} \in \mathbb{C}[\{ x_{st}: (s,t)\not=(i,j)\}]. 
\end{equation}
 Fix the least pair $(i,j)$ under the inclusion $\leq$ with $i<j$ for which there exists $k\not=0$ with $f_{ij,k}\not=0$. Continue to denote it by $(i,j)$. 
If no such $(i,j)$ exists, then $f\in\mathbb{C}[x_{ii}: 1\leq i\leq n]$ and we are done. 
Let $\widehat{u}_2$ be an $n\times n$ matrix with $1$ along the diagonal, the variable $u$ in the $(i,j)$-entry, and $0$ elsewhere. Consider $(\I,\widehat{u}_2)$.    Then 
\begin{equation}
u_{ij}.x_{st} = 
\begin{cases} 
x_{st} + x_{jt} u &\mbox{ if } s=i  \mbox{ and } \epsilon(a) = 1, \\ 
x_{st} - x_{si} u & \mbox{ if } t=j \mbox{ and } \epsilon(a) = 0, \\ 
x_{st} &\mbox{ otherwise. }  \\ 
\end{cases} 
\end{equation}
Now we rewrite $f$ as 
\[ 
f = \sum_{k\geq 0} x_{ij}^k F_k, \mbox{ where } F_k \in \mathbb{C}[\{ x_{st}: (s,t)\geq (i,j)    \} ]=: R_0. 
\] 
If $\epsilon(a)=1$, then 
\[ 
0 = u_{ij}.f -f = \sum_{k\geq 1} \sum_{1\leq l\leq k}  x_{ij}^{k-l} x_{jj}^l u^l \binom{k}{l} F_k.  
\] 
Since $\{ x_{ij}^{k-l}u^l: 1\leq l\leq k, k\geq 0 \}$ is linearly independent over $R_0$, $F_k=0$ for $k\geq 1$, contradicting the choices of $(i,j)$. Similar argument follows if $\epsilon(a)=0$. It follows that $f\in \mathbb{C}[x_{ii}]$ as claimed. 
\end{proof}

\begin{proposition}\label{prop:k-Kronecker-setting}
Let $Q$ be a $k$-generalized Kronecker quiver, where $Q$ has more than $1$ arrow. Then there is a point in the filtered representation space of $Q$ such that its stabilizer subgroup is trivial.  
\end{proposition}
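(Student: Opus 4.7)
The plan is to exhibit a single explicit point of $F^{\bullet}Rep(Q,\beta)\cong \mathfrak{b}^{\oplus k}$ with trivial $\mathbb{U}$-stabilizer, where $\mathbb{U}=U\times U$. First I would record the $\mathbb{U}$-action arrow by arrow: for any arrow $a$ with tail $ta\in\{1,2\}$ and head $ha\in\{1,2\}$ the action is $(u_1,u_2)\cdot x_a = u_{ha}\, x_a\, u_{ta}^{-1}$, which is simply a repackaging of the formula used in the proof of Proposition~\ref{prop:A1-Dynkin-quiver}. This is the only way in which the quiver orientation enters the problem, and because $k\geq 2$ I can single out two specific arrows $a_1, a_2\in Q_1$ and fill in the remaining coordinates with $0$.

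The candidate point is
\[
x_{a_1}=\I, \qquad x_{a_2}=D:=\diag(1,2,\ldots,n), \qquad x_{a}=0 \text{ for all other arrows } a.
\]
Both $\I$ and $D$ lie in $\mathfrak{b}$, so this is a legitimate point of the filtered representation space. The key observation is that $\I$ is central in $GL_n(\mathbb{C})$, so the fixed-point equation at $a_1$, namely $u_{ha_1}\,\I\,u_{ta_1}^{-1}=\I$, forces $u_1=u_2$ regardless of the orientation of $a_1$. Writing $u:=u_1=u_2$, the equation at $a_2$ becomes $uDu^{-1}=D$; since $D$ has pairwise distinct eigenvalues, its centralizer in $\mathfrak{gl}_n(\mathbb{C})$ is the diagonal subalgebra, and a unipotent upper triangular matrix that is also diagonal must equal $\I$. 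Hence $u_1=u_2=\I$, so $\Stab_{\mathbb{U}}(x_{a_1},\ldots,x_{a_k})=\{(\I,\I)\}$.

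I do not expect any serious obstacle. The argument is dimension-free and avoids case analysis: the first arrow's scalar matrix synchronizes the two factors of $\mathbb{U}$, and the second arrow's regular semisimple upper triangular matrix kills the remaining diagonal ambiguity. The only subtle point is the orientation-insensitivity of the $a_1$-equation, which is automatic precisely because $\I$ is central; this is why placing a scalar matrix at $x_{a_1}$ is cleaner than attempting to perform both jobs with a single regular element. The construction uniformly handles every orientation pattern of the $k\geq 2$ arrows (both arrows in one direction, one in each direction, together with arbitrary additional arrows).
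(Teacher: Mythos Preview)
Your proof is correct and is genuinely different from---and considerably more economical than---the paper's argument. The paper works with a generic point: it imposes open conditions on the diagonals of two chosen arrow representations (the $2\times 2$ minors $x_{\iota\iota}^{(j')}x_{\gamma\gamma}^{(j)}-x_{\iota\iota}^{(j)}x_{\gamma\gamma}^{(j')}\not=0$, etc.), writes out the stabilizer equations entrywise, and then runs an induction up the superdiagonals, at each level solving a $2\times 2$ linear system whose coefficient matrix is invertible by the genericity hypotheses. This requires separate bookkeeping for the three orientation patterns (two arrows $i\to\mu$, two arrows $\mu\to i$, or one of each). By contrast, your trick of placing $\I$ at one arrow collapses the two unipotent factors into a single $u$ in one stroke and is orientation-blind precisely because $\I$ is central; the regular diagonal at the second arrow then finishes via the standard centralizer argument, with no induction or case analysis.

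What the paper's approach buys is that its point has \emph{all} arrow representations invertible, which is exactly what is exploited in the proof of Theorem~\ref{theorem:finite-dim} to propagate the identity outward along the quiver (``since the eigenvalues of $W(b_j)$ are nonzero, $W(b_j)$ is invertible''). Your point, with zeros on the remaining arrows, would not serve that purpose directly---but of course nothing stops you from replacing those zeros by invertible upper-triangular matrices without disturbing the stabilizer computation, so this is a cosmetic rather than a substantive difference for the proposition as stated.
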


\begin{proof}
Since $Q$ is a $k$-generalized Kronecker quiver, $Q$ has finite number of arrows. Label one of the vertices as $i$ and the other as $\mu$. Let us denote $a_1,\ldots, a_{p}$ as the arrows whose head is at vertex $i$ and let us denote $b_1,\ldots, b_q$ as the arrows whose tail is at vertex $i$. 
Let us write $u=(u^{(i)},u^{(\mu)})$ to be an element in the unipotent group $U^{2}:= U\times U$, where  
$u^{(i)}$ denotes the unipotent matrix representation at vertex $i$ with entries: 
\[ 
(u^{(i)})_{\iota\gamma} = 
\begin{cases} 
u_{\iota\gamma}^{(i)} &\mbox{ if }\iota < \gamma,  \\ 
1 & \mbox{ if } \iota = \gamma, \\ 
0 & \mbox{ otherwise. } \\ 
\end{cases}
\] 
For $W\in F^{\bullet}Rep(Q,\beta)$, we have the following group actions: 
\begin{enumerate}
\item $u.W(a_j) = u^{(i)}W(a_j)(u^{(\mu)})^{-1}$, 
\item  $u.W(b_j) = u^{(\mu)}W(b_j)(u^{(i)})^{-1}$.  
\end{enumerate}
Since $F^{\bullet}Rep(Q,\beta)$ is a filtered representation space, 
\[ 
\begin{aligned}  
W(a_j)_{\iota\gamma} &= 
\begin{cases} 
x_{\iota\gamma}^{(j)} &\mbox{ if }\iota\leq \gamma, \\ 
0 &\mbox{ otherwise. } \\ 
\end{cases}  \hspace{8mm}
W(b_j)_{\iota\gamma} &= 
\begin{cases} 
y_{\iota\gamma}^{(j)} &\mbox{ if }\iota\leq \gamma, \\ 
0 &\mbox{ otherwise. } \\ 
\end{cases} 
\end{aligned} 
\] 
Next, consider the map 
\begin{center}
$U^{2}\times F^{\bullet}Rep(Q,\beta)|_{\mathfrak{b}_{a_j}}\rightarrow F^{\bullet}Rep(Q,\beta)|_{\mathfrak{b}_{a_j}}$, where  $F^{\bullet}Rep(Q,\beta)|_{\mathfrak{b}_{a_j}}$ $:=$ $F^{\bullet}Rep((\{i, \mu \}, \{ a_j\} ), (n,n)) \cong \mathfrak{b}$, 
\end{center}
sending: 

\[ 
\begin{aligned} 
&(u,W)\mapsto u.W(a_j) = u^{(i)} W(a_j) (u^{(\mu)})^{-1}\\ 
&=  
\left(  
\begin{smallmatrix} 
x_{11}^{(j)} & \cdots & \displaystyle{\sum_{\stackrel{1\leq \iota \leq \gamma_1<\cdots < \gamma_m =\gamma}{1\leq m \leq \gamma}}} (-1)^{\deg(v)} u_{1\iota}^{(i)} x_{\iota \gamma_1}^{(j)} 
u_{\gamma_1\gamma_2}^{(\mu)} \cdots u_{\gamma_{m-1},\gamma}^{(\mu)} & \cdots &   \displaystyle{\sum_{
\stackrel{1\leq \iota \leq \gamma_1<\cdots < \gamma_m =n}{1 \leq m \leq n}}}  (-1)^{\deg(v)}  
u_{1\iota}^{(i)} x_{\iota \gamma_1}^{(j)} u_{\gamma_1\gamma_2}^{(\mu)} \cdots u_{\gamma_{m-1}n}^{(\mu)}  \\  
\vdots & \cdots & \vdots & \cdots & \vdots \\  
0 & \cdots & x_{\gamma\gamma}^{(j)} &  \cdots & 
 \displaystyle{\sum_{\stackrel{\gamma \leq \iota \leq \gamma_1<\cdots < \gamma_m = n }{1 \leq m \leq \gamma }}} 
(-1)^{\deg(v)} u_{\gamma\iota}^{(i)} x_{\iota \gamma_1}^{(j)} u_{\gamma_1\gamma_2}^{(\mu)} \cdots u_{\gamma_{m-1},n}^{(\mu)} \\ 
\vdots & \cdots & \vdots & \cdots & \vdots \\  
0 & \cdots & 0 & \cdots & x_{nn}^{(j)} \\  
\end{smallmatrix}\right),  
\end{aligned} 
\] 
where $v = u_{\gamma_1\gamma_2}^{(\mu)} \cdots u_{\gamma_{m-1}\gamma_m}^{(\mu)}$, 
the product of $u_{\alpha\beta}^{(\mu)}$ obtained from the group representation at vertex $\mu$. 
In the case when 
$m=1$, then we have 
$u_{\gamma\iota}^{(i)} x_{\iota \gamma_1}^{(j)} u_{\gamma_1\gamma_2}^{(\mu)}\cdots u_{\gamma_{m-1},\gamma_m}^{(\mu)}:= 
u_{\gamma\iota}^{(i)} x_{\iota \gamma_1}^{(j)}$. Thus the degree of $v$ in $u_{\gamma\iota}^{(i)} x_{\iota \gamma_1}^{(j)} u_{\gamma_1\gamma_2}^{(\mu)}\cdots u_{\gamma_{m-1},\gamma_m}^{(\mu)}$ is zero when $m=1$.

Secondly, consider the map 
$U^2 \times F^{\bullet}Rep(Q,\beta)|_{\mathfrak{b}_{b_j}}\rightarrow F^{\bullet}Rep(Q,\beta)|_{\mathfrak{b}_{b_j}}$, 
where $F^{\bullet}Rep(Q,\beta)|_{\mathfrak{b}_{b_j}}:= F^{\bullet}Rep((\{\mu, i \}, \{ b_j\} ), (n,n)) \cong \mathfrak{b}$, sending  
\[ 
\begin{aligned} 
&(u,W)\mapsto u.W(b_j) =  u^{(\mu)} W(b_j) (u^{(i)})^{-1}   \\ 
&= 
\begin{psmallmatrix} 
y_{11}^{(j)} & \cdots & \displaystyle{\sum_{\stackrel{1\leq \iota \leq \gamma_1<\cdots < \gamma_m =\gamma}{1\leq m \leq \gamma}}} (-1)^{\deg(w)} u_{1\iota}^{(\mu)} y_{\iota \gamma_1}^{(j)} u_{\gamma_1\gamma_2}^{(i)} \cdots u_{\gamma_{m-1},\gamma}^{(i)} & \cdots &   \displaystyle{\sum_{
\stackrel{1\leq \iota \leq \gamma_1<\cdots < \gamma_m =n}{1 \leq m \leq n}}} (-1)^{\deg(w)} u_{1\iota}^{(\mu)} y_{\iota \gamma_1}^{(j)} u_{\gamma_1\gamma_2}^{(i)} \cdots u_{\gamma_{m-1}n}^{(i)}  \\  
\vdots & \cdots & \vdots & \cdots & \vdots \\  
0 & \cdots & y_{\gamma\gamma}^{(j)} &  \cdots & 
 \displaystyle{\sum_{\stackrel{\gamma \leq \iota \leq \gamma_1<\cdots < \gamma_m = n }{1 \leq m \leq \gamma }}} 
(-1)^{\deg(w)} u_{\gamma\iota}^{(\mu)} y_{\iota \gamma_1}^{(j)} u_{\gamma_1\gamma_2}^{(i)} \cdots u_{\gamma_{m-1},n}^{(i)} \\ 
\vdots & \cdots & \vdots & \cdots & \vdots \\  
0 & \cdots & 0 & \cdots & y_{nn}^{(j)} \\  
\end{psmallmatrix},   \\ 
\end{aligned} 
\]  
where 
$w = u_{\gamma_1\gamma_2}^{(i)} \cdots u_{\gamma_{m-1}\gamma_m}^{(i)}$, the product of $u_{\alpha\beta}^{(i)}$ appearing in the group representation at vertex $i$. 
Similar as before, 
the degree of $w$ in 
$u_{\gamma\iota}^{(\mu)} x_{\iota \gamma_1}^{(j)} u_{\gamma_1\gamma_2}^{(i)}\cdots u_{\gamma_{m-1},\gamma_m}^{(i)}$ 
is zero if $m=1$.

Now we will restrict to the representation $W$ in $F^{\bullet}Rep(Q,\beta)$ satisfying the following conditions: for each arrow $d\in Q_1$, 
$x_{\iota\iota}^{(j')}x_{\iota+1,\iota+1}^{(j)}-x_{\iota\iota}^{(j)}x_{\iota+1,\iota+1}^{(j')}\not=0$, 
$y_{\iota\iota}^{(j')}y_{\iota+1,\iota+1}^{(j)}-y_{\iota\iota}^{(j)}y_{\iota+1,\iota+1}^{(j')}\not=0$, 
and 
$x_{\iota\iota}^{(j')}y_{\iota\iota}^{(j)} - y_{\nu\nu}^{(j)}x_{\nu\nu}^{(j')}\not=0$  
for all $j\not=j'$ 
and $1\leq \iota <\nu \leq  n$. 
  
First, suppose $Q$ has two arrows $a_j$ and $a_j'$ where $j\not=j'$. They have matrix representations 
$W( a_j )$ and $W(a_{ j' })$, where $j\not=j'$.   
So for $l\in \{ j,j'\} $,  
$u^{(i)} W(a_l) (u^{(\mu)})^{-1}$ is   
\[  
\left(  
\begin{smallmatrix} 
x_{11}^{(l)} & \cdots & \displaystyle{\sum_{\stackrel{1\leq \iota \leq \gamma_1<\cdots < \gamma_m =\gamma}{1\leq m \leq \gamma}}} (-1)^{\deg(v)} u_{1\iota}^{(i)} x_{\iota \gamma_1}^{(l)} u_{\gamma_1\gamma_2}^{(\mu)} \cdots u_{\gamma_{m-1},\gamma}^{(\mu)} & \cdots &   \displaystyle{\sum_{
\stackrel{1\leq \iota \leq \gamma_1<\cdots < \gamma_m =n}{1 \leq m \leq n}}}  (-1)^{\deg(v)}  
u_{1\iota}^{(i)} x_{\iota \gamma_1}^{(l)} u_{\gamma_1\gamma_2}^{(\mu)} \cdots u_{\gamma_{m-1}n}^{(\mu)}  \\  
\vdots & \cdots & \vdots & \cdots & \vdots \\  
0 & \cdots & x_{\gamma\gamma}^{(l)} &  \cdots & 
 \displaystyle{\sum_{\stackrel{\gamma \leq \iota \leq \gamma_1<\cdots < \gamma_m = n }{1 \leq m \leq \gamma }}} 
(-1)^{\deg(v)} u_{\gamma\iota}^{(i)} x_{\iota \gamma_1}^{(l)} u_{\gamma_1\gamma_2}^{(\mu)} \cdots u_{\gamma_{m-1},n}^{(\mu)} \\ 
\vdots & \cdots & \vdots & \cdots & \vdots \\  
0 & \cdots & 0 & \cdots & x_{nn}^{(l)} \\  
\end{smallmatrix}\right). 
\] 
The level $1$-diagonal entries of $u^{(i)} W(a_l) (u^{(\mu)})^{-1}=W(a_l)$, where $l=j,j'$, are: 
\begin{equation}\label{eq:level-1-diag-Kronecker}  
\begin{aligned}
x_{\iota, \iota+1}^{(j)} + u_{\iota,\iota+1}^{(i)} x_{\iota+1, \iota+1}^{(j)} -  x_{\iota\iota}^{(j)}  u_{\iota,\iota+1}^{(\mu)} &= x_{\iota, \iota+1}^{(j)},   \\ 
x_{\iota, \iota+1}^{(j')} + u_{\iota,\iota+1}^{(i)} x_{\iota+1, \iota+1}^{(j')} -  x_{\iota\iota}^{(j')}  u_{\iota,\iota+1}^{(\mu)} &= x_{\iota, \iota+1}^{(j')}.  \\ 
\end{aligned}
\end{equation}  
These simplify as   
\[ 
\begin{aligned}
 u_{\iota,\iota+1}^{(i)} x_{\iota+1, \iota+1}^{(j)} -  x_{\iota\iota}^{(j)}  u_{\iota,\iota+1}^{(\mu)} &= 0,   \\ 
u_{\iota,\iota+1}^{(i)} x_{\iota+1, \iota+1}^{(j')} -  x_{\iota\iota}^{(j')}  u_{\iota,\iota+1}^{(\mu)} &= 0, \\ 
\end{aligned} 
\] 
which reduce to solving a system of linear equations: 
\[ 
\left( 
\begin{matrix}
x_{\iota+1, \iota+1}^{(j)} & -  x_{\iota\iota}^{(j)}  \\ 
x_{\iota+1, \iota+1}^{(j')} &  -  x_{\iota\iota}^{(j')}  \\ 
\end{matrix} 
\right)  
\left( 
\begin{matrix} 
u_{\iota,\iota+1}^{(i)} \\   
u_{\iota,\iota+1}^{(\mu)}  \\   
\end{matrix}  
\right)  
= 
\left( 
\begin{matrix}
0 \\ 
0 \\ 
\end{matrix}  
\right).   
\]   
Since the coefficient matrix is invertible, $u_{\iota,\iota+1}^{(i)}=u_{\iota,\iota+1}^{(\mu)} =0$.  
Since $\iota$ is arbitary, we conclude that $u_{\iota,\iota+1}^{(i)}=u_{\iota,\iota+1}^{(\mu)} =0$ for all $1\leq \iota < n$. 
Now assume complete induction on $k$-superdiagonal entries; that is,  
$u_{\iota, \iota+\gamma}^{(i)} = u_{\iota,\iota+\gamma}^{(\mu)} = 0$ for all $1\leq \iota \leq n-k$ and $1\leq \gamma\leq k$. 
The following sets of equations are on the $(k+1)$-superdiagonal:   
\begin{equation}  
\begin{aligned}
  \displaystyle{\sum_{
\stackrel{\iota \leq \iota' \leq \gamma_1<\cdots < \gamma_m =\iota+k+1}{1 \leq m \leq  \iota}}}  (-1)^{\deg(v)}  
u_{\iota \iota'}^{(i)} x_{\iota' \gamma_1}^{(j)} u_{\gamma_1\gamma_2}^{(\mu)} \cdots u_{\gamma_{m-1}, \iota+k+1 }^{(\mu)}  
&= x_{\iota, \iota+k+1 }^{(j)}, \\ 
 \displaystyle{\sum_{
\stackrel{\iota \leq \iota' \leq \gamma_1<\cdots < \gamma_m =\iota+k+1}{1 \leq m \leq  \iota}}}  (-1)^{\deg(v)}  
u_{\iota \iota'}^{(i)} x_{\iota' \gamma_1}^{(j')} u_{\gamma_1\gamma_2}^{(\mu)} \cdots u_{\gamma_{m-1}, \iota+k+1 }^{(\mu)}   
&= x_{\iota, \iota+k+1 }^{(j')}. \\   
\end{aligned}   
\end{equation}   
If $m\geq 3$,   
then the difference of the indices $\gamma'$ and $\gamma''$ for the variables  
$u_{\gamma'\gamma''}^{(i)}$ and $u_{\gamma'\gamma''}^{(\mu)}$ are strictly less than $k+1$. 
Since all such monomials vanish by complete induction, we are left with those terms where $m$ in the $\gamma$ partition is strictly less than $3$: 
\begin{equation}\label{eq:2-or-more-Kronecker-setting}
\begin{aligned}
 \displaystyle{\sum_{   \iota \leq \iota' \leq \gamma_1 = \iota + k + 1 }} 
(-1)^{\deg(v)} u_{\iota\iota'}^{(i)} x_{\iota' , \iota+ k+1}^{(j)}      
+  
 \displaystyle{\sum_{   \iota \leq \iota' \leq \gamma_1 < \gamma_2 = \iota + k + 1 }} 
(-1)^{\deg(v)} u_{\iota\iota'}^{(i)} x_{\iota' \gamma_1}^{(j)} u_{\gamma_{1},\iota+k+1}^{(\mu)}  
&= x_{\iota, \iota+k+1}^{(j)},   \\    
 \displaystyle{\sum_{   \iota \leq \iota' \leq \gamma_1 = \iota + k + 1 }}   
(-1)^{\deg(v)} u_{\iota\iota'}^{(i)} x_{\iota' , \iota+ k+1}^{(j')}   
+  
 \displaystyle{\sum_{   \iota \leq \iota' \leq \gamma_1 < \gamma_2 = \iota + k + 1 }} 
(-1)^{\deg(v)} u_{\iota\iota'}^{(i)} x_{\iota' \gamma_1}^{(j')} u_{\gamma_{1},\iota+k+1}^{(\mu)}  
&= x_{\iota, \iota+k+1}^{(j')}.    \\   
\end{aligned}   
\end{equation}
From the first sum in the first equation, two cases when $u_{\iota\iota'}^{(i)}$ does not equal zero are when $\iota=\iota'$ and when $\iota'=\iota+k+1$. From the second sum in the first equation, the only case when $u_{\gamma_1,\iota+k+1}^{(\mu)}$ does not equal zero is when $\gamma_1=\iota$ since $\iota+k+1-\gamma_1$ would be strictly greater than $k$. Putting this together, we have 
\[   
\bcancel{u_{\iota\iota}^{(i)} x_{\iota, \iota+ k+1}^{(j)}}
 + u_{\iota,\iota+k+1}^{(i)} x_{\iota+k+1, \iota+ k+1}^{(j)}  
-      
u_{\iota\iota}^{(i)} x_{\iota \iota}^{(j)} u_{\iota ,\iota+k+1}^{( \mu )}    
=\bcancel{ x_{\iota, \iota+k+1}^{(j)}}, 
\] 
or 
\begin{equation}\label{eq:2-Kron-setting-towards-inv-matrix-j}
u_{\iota,\iota+k+1}^{(i)} x_{\iota+k+1, \iota+ k+1}^{(j)}  
-      
  x_{\iota \iota}^{(j)} u_{\iota ,\iota+k+1}^{( \mu )}    
=0.   
\end{equation}  
Similarly we obtain 
\begin{equation}\label{eq:2-Kron-setting-towards-inv-matrix-jprime} 
u_{\iota,\iota+k+1}^{(i)} x_{\iota+k+1, \iota+ k+1}^{(j')}  
-       
  x_{\iota \iota}^{(j')} u_{\iota ,\iota+k+1}^{( \mu )}     
= 0   
\end{equation}
when simplifying the second equation in \eqref{eq:2-or-more-Kronecker-setting}. 
For a fixed $\iota$ and forming a system of linear equations using the two equations in \eqref{eq:2-Kron-setting-towards-inv-matrix-j} and \eqref{eq:2-Kron-setting-towards-inv-matrix-jprime}, 
we obtain: 
\[  
\left( 
\begin{matrix} 
x_{\iota+k+1, \iota+ k+1}^{(j)}   &   -x_{\iota \iota}^{(j)} \\ 
x_{\iota+k+1, \iota+ k+1}^{(j')}   &  -x_{\iota \iota}^{(j')} \\ 
\end{matrix} 
\right) 
\left( 
\begin{matrix} 
u_{\iota,\iota+k+1}^{(i)} \\ 
u_{\iota ,\iota+k+1}^{(\mu)}    \\ 
\end{matrix} 
\right) 
=  
\left( 
\begin{matrix} 
0 \\   
0 \\   
\end{matrix} 
\right) 
\]  
 Since the $2\times 2$ matrix on the left is invertible, we see that  
$u_{\iota,\iota+k+1}^{(i)} = 
u_{\iota ,\iota+k+1}^{(\mu)} = 0$. 
Since $\iota$ is arbitrary, we conclude that 
$u_{\iota,\iota+k+1}^{(i)} = 
u_{\iota ,\iota+k+1}^{(\mu)} = 0$ for all $1\leq \iota < n-k$. 

Since $Q$ is a $k$-generalized Kronecker quiver, similar arguments hold for $W(b_j)$ and $W(b_{j'})$.   
Now suppose $Q$ has two arrows, one of which is $a$ whose $ha=i$ and one of which is $b$ whose $tb=i$. We will combine the representation of these two arrows to show that the stabilizer subgroup is trivial. 
For simplicity, let us suppress the subscript and write $a$ and $b$ to denote the two arrows and let us write the entries of $W(a)$ and $W(b)$ as: 
\[ 
W(a)_{\iota\gamma} 
= 
\begin{cases}  
x_{\iota\gamma} &\mbox{ if } \iota \leq \gamma, \\  
0 &\mbox{ otherwise, } \\  
\end{cases} 
\hspace{4mm} 
\mbox{ and } 
\hspace{4mm}  
W(b)_{\iota\gamma} 
= 
\begin{cases}  
y_{\iota\gamma} &\mbox{ if } \iota \leq \gamma, \\  
0 &\mbox{ otherwise. } \\  
\end{cases} 
\] 
For $1\leq \iota<n$,
consider the $(\iota,\iota+1)$-entries of the equations:  
 \begin{equation}\label{eq:one-a-one-b-Kronecker-quiver}
u^{(i)}W(a)(u^{(\mu)})^{-1}=W(a) \hspace{4mm}  \mbox{ and } \hspace{4mm} 
u^{(\mu)}W(b)(u^{(i)})^{-1}=W(b).  
\end{equation}
They are 
\[
\begin{aligned} 
x_{\iota,\iota+1}+ u_{\iota,\iota+1}^{(i)}x_{\iota+1,\iota+1} - x_{\iota\iota}u_{\iota,\iota+1}^{(\mu)} &= x_{\iota,\iota+1}  \mbox{ and } \\ 
y_{\iota,\iota+1}+ u_{\iota,\iota+1}^{(\mu)}y_{\iota+1,\iota+1} - y_{\iota\iota}u_{\iota,\iota+1}^{(i)} &= y_{\iota,\iota+1}, \\ 
\end{aligned} 
\] 
which simplify as 
\[
\begin{aligned} 
 u_{\iota,\iota+1}^{(i)}x_{\iota+1,\iota+1} - x_{\iota\iota}u_{\iota,\iota+1}^{(\mu)} &= 0  \\ 
u_{\iota,\iota+1}^{(\mu)}y_{\iota+1,\iota+1} - y_{\iota\iota}u_{\iota,\iota+1}^{(i)} &= 0 \\ 
\end{aligned} 
\] 
or 
\[ 
\left( 
\begin{matrix} 
x_{\iota+1,\iota+1} & - x_{\iota\iota} \\  
-y_{\iota\iota} & y_{\iota+1,\iota+1} \\  
\end{matrix}
\right) 
\left(  
\begin{matrix}
u_{\iota,\iota+1}^{(i)} \\ 
u_{\iota,\iota+1}^{(\mu)} \\ 
\end{matrix}
\right) 
= 
\left( 
\begin{matrix} 
0 \\ 
0 \\ 
\end{matrix}
\right).  
\] 
Since the matrix on the left is invertible by assumption, we have that $u_{\iota,\iota+1}^{(i)} = 
u_{\iota,\iota+1}^{(\mu)} = 0$.  
Now, assume complete induction on $k$-superdiagonal, which means $u_{\iota,\iota+\gamma}^{(i)}
= u_{\iota,\iota+\gamma}^{(\mu)} =0
$ for all $1\leq \iota \leq n-k$ and $1\leq \gamma \leq k$. On the $(k+1)$-superdiagonal, we have 
\begin{equation}\label{eq:Kronecker-quiver-setting-induction-xs}
 \displaystyle{\sum_{\stackrel{\iota \leq \iota' \leq \gamma_1<\cdots < \gamma_m = \iota + k + 1 }{1 \leq m \leq \iota }}} 
(-1)^{\deg(\omega')} u_{\iota\iota'}^{(i)} x_{\iota' \gamma_1}  u_{\gamma_1\gamma_2}^{(\mu)} \cdots u_{\gamma_{m-1},\iota+k+1}^{(\mu)}  
= x_{\iota, \iota+k+1},   
\end{equation} 
\begin{equation}\label{eq:Kronecker-quiver-setting-induction-ys}
 \displaystyle{\sum_{\stackrel{\iota \leq \iota' \leq \gamma_1<\cdots < \gamma_m = \iota + k + 1 }{1 \leq m \leq \iota }}} 
(-1)^{\deg(\omega'')} u_{\iota\iota'}^{(\mu)} y_{\iota' \gamma_1}  u_{\gamma_1\gamma_2}^{(i)} \cdots u_{\gamma_{m-1},\iota+k+1}^{(i)}  
= y_{\iota, \iota+k+1},   
\end{equation} 
where $\omega' = u_{\gamma_1\gamma_2}^{(\mu)} \cdots u_{\gamma_{m-1},\iota+k+1}^{(\mu)}$ in Equation~\eqref{eq:Kronecker-quiver-setting-induction-xs},   
the product of $u_{\alpha\beta}^{(\mu)}$ obtained from the group representation at vertex $\mu$,   
and  
$\omega'' = u_{\gamma_1\gamma_2}^{(i)} \cdots u_{\gamma_{m-1},\iota+k+1}^{(i)}$ in Equation~\eqref{eq:Kronecker-quiver-setting-induction-ys},  
the product of $u_{\alpha\beta}^{(i)}$ obtained from the group representation at vertex $i$.  

First, consider Equation~\eqref{eq:Kronecker-quiver-setting-induction-xs}. 
For $m\geq 3$, the difference of the indices $\gamma'$ and $\gamma''$ for the variables $u_{\gamma'\gamma''}^{(\mu)}$ must be strictly less than $k+1$. Since all such monomials vanish by complete induction, we are left with those terms where $m$ in the $\gamma$ partition is less than $3$: 
\begin{equation}\label{eq:k-Kronecker-quiver-a-b-setting}
 \displaystyle{\sum_{   \iota \leq \iota' \leq \gamma_1 = \iota + k + 1 }} 
(-1)^{\deg(v)} u_{\iota\iota'}^{(i)} x_{\iota' , \iota+ k+1}    
+  
 \displaystyle{\sum_{   \iota \leq \iota' \leq \gamma_1 < \gamma_2 = \iota + k + 1 }} 
(-1)^{\deg(v)} u_{\iota\iota'}^{(i)} x_{\iota' \gamma_1} u_{\gamma_{1},\iota+k+1}^{(\mu)}  
= x_{\iota, \iota+k+1},  
\end{equation}
From the first sum in Equation~\eqref{eq:k-Kronecker-quiver-a-b-setting}, two cases when $u_{\iota\iota'}^{(i)}$ does not equal zero are when $\iota=\iota'$ 
and when $\iota'=\iota+k+1$. 
From the second sum, the only case when $u_{\gamma_1,\iota+k+1}^{(\mu)} \not=0$ is when $\gamma_1=\iota$ 
since then $\iota+k+1-\gamma_1>k$. Putting this together, we have   
\[   
\bcancel{u_{\iota\iota}^{(i)} x_{\iota, \iota+ k+1}} 
 + u_{\iota,\iota+k+1}^{(i)} x_{\iota+k+1, \iota+ k+1} 
-      
u_{\iota\iota}^{(i)} x_{\iota \iota}  u_{\iota ,\iota+k+1}^{( \mu )}    
=\bcancel{ x_{\iota, \iota+k+1} }, 
\] 
or 
\begin{equation}\label{eq:Kronecker-final-inductive-step-xs} 
u_{\iota,\iota+k+1}^{(i)} x_{\iota+k+1, \iota+ k+1} 
-      
  x_{\iota \iota}  u_{\iota ,\iota+k+1}^{( \mu )}    
=0.   
\end{equation}  
Using a similar argument on Equation~\eqref{eq:Kronecker-quiver-setting-induction-ys},
we obtain 
\begin{equation}\label{eq:Kronecker-final-inductive-step-ys}
u_{\iota,\iota+k+1}^{(\mu)}y_{\iota+k+1,\iota+k+1} - y_{\iota\iota}u_{\iota,\iota+k+1}^{(i)} = 0. 
\end{equation}
Put Equations~\eqref{eq:Kronecker-final-inductive-step-xs} and \eqref{eq:Kronecker-final-inductive-step-ys} 
together to obtain: 
\[ 
\left(  
\begin{matrix} 
x_{\iota+k+1,\iota+k+1} & -x_{\iota\iota} \\ 
-y_{\iota\iota} & y_{\iota+k+1,\iota+k+1} \\ 
\end{matrix} 
\right) 
\left( 
\begin{matrix}  
u_{\iota,\iota+k+1}^{(i)} \\ 
u_{\iota,\iota+k+1}^{(\mu)} \\ 
\end{matrix} 
\right) 
= 
\left( 
\begin{matrix}
0 \\ 
0 \\ 
\end{matrix} 
\right).  
\]  
Since the determinant of the matrix on the left is nonzero by assumption, we conclude that   
$u_{\iota,\iota+k+1}^{(i)} = u_{\iota,\iota+k+1}^{(\mu)} =0$ for all $1\leq \iota < n-k$.   
Thus there is a point in a $k$-generalized Kronecker quiver (where $Q$ has at least two arrows) whose stabilizer subgroup is trivial. 
\end{proof}

\begin{proposition}\label{prop:k-path-Kronecker-setting}
Let $Q$ be a $k$-generalized path Kronecker quiver, where $Q$ has more than $1$ arrow. Then there is a point in the filtered representation space of $Q$ such that its stabilizer subgroup is trivial.  
\end{proposition}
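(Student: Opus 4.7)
The plan is to reduce the problem to Proposition~\ref{prop:k-Kronecker-setting} by exploiting that the $\mathbb{U}$-action on the matrix of a composed path factors through only the unipotents at the path's two endpoints. Explicitly, for a path $p = a_{p_r}^{(r)}\cdots a_1^{(r)}\colon i \to j$ with intermediate vertices $v_1,\ldots,v_{p_r-1}$, the intermediate unipotents telescope:
\[
u.W(p) \;=\; \bigl(u^{(j)}W(a_{p_r}^{(r)})(u^{(v_{p_r-1})})^{-1}\bigr)\cdots\bigl(u^{(v_1)}W(a_1^{(r)})(u^{(i)})^{-1}\bigr) \;=\; u^{(j)}\,W(p)\,(u^{(i)})^{-1},
\]
and analogously for each $q_s\colon j \to i$. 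Hence, on the tuple of composed path matrices $\{W(p_r), W(q_s)\}$, the $\mathbb{U}$-action coincides with the $U^{2}$-action appearing in the ordinary $k$-generalized Kronecker setting at the two vertices $i$ and $j$.

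First I would choose each individual arrow matrix $W(a)$ to be an invertible upper triangular matrix (all diagonal entries nonzero), with enough genericity to ensure that the composed path matrices $W(p_r)$ and $W(q_s)$ satisfy the nondegeneracy conditions used in the proof of Proposition~\ref{prop:k-Kronecker-setting}, namely that the relevant $2\times 2$ determinants formed from their diagonal entries are all nonzero. Because $W(p_r)_{\ell\ell} = \prod_{s} W(a_s^{(r)})_{\ell\ell}$ (and similarly for $W(q_s)$), these conditions translate into a finite list of polynomial inequalities in the diagonal entries of the individual arrow matrices; each defines a nonempty Zariski-open subset, so they can be satisfied simultaneously.

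With such a point chosen, applying the argument of Proposition~\ref{prop:k-Kronecker-setting} verbatim to $\{W(p_r), W(q_s)\}$ forces $u^{(i)} = u^{(j)} = \I$. To propagate triviality to the intermediate vertices, I would walk along each path: for any arrow $a\colon v\to v'$ with $u^{(v)}=\I$ already known, the stabilizer equation $u^{(v')}W(a)(u^{(v)})^{-1} = W(a)$ becomes $u^{(v')}W(a) = W(a)$, and invertibility of $W(a)$ yields $u^{(v')} = \I$; the symmetric statement starting from $u^{(v')}=\I$ handles propagation against the arrow orientation. Since each path is acyclic with endpoints in $\{i,j\}$, iterating arrow-by-arrow from the (already trivial) endpoints reaches every intermediate vertex, so $u^{(v)} = \I$ everywhere.

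The main obstacle will be the genericity step: one must simultaneously achieve individual-arrow invertibility \emph{and} the composed-path nondegeneracy required by Proposition~\ref{prop:k-Kronecker-setting}. This is a finite intersection of nonempty Zariski-open conditions on the diagonal entries of the arrow matrices, which remains nonempty; the off-diagonal entries may then be chosen arbitrarily. Once such a point is produced, the proof is a two-step reduction: Proposition~\ref{prop:k-Kronecker-setting} handles the endpoints and arrow-wise invertibility handles the interior vertices.
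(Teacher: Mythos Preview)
Your proposal is correct and follows essentially the same approach as the paper: replace arrows by paths, observe that the $\mathbb{U}$-action on a composed path telescopes to depend only on the endpoint unipotents, and then invoke the argument of Proposition~\ref{prop:k-Kronecker-setting} at the two distinguished vertices. The paper's own proof is in fact little more than a one-sentence sketch (``make the following substitutions\ldots proof is similar, omitted''), so your write-up is considerably more explicit---in particular, you spell out the propagation step to the intermediate vertices via arrow-wise invertibility, which the paper defers to the surrounding proof of Theorem~\ref{theorem:finite-dim} rather than stating here.
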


To prove Proposition~\ref{prop:k-path-Kronecker-setting}, we make the following substitutions in the proof of Proposition~\ref{prop:k-Kronecker-setting}:  
first, suppose the head of the paths $a_1^{(\iota)}\cdots a_{p_{\iota}}^{(\iota)}$ is $i$ and the tail is $\mu$ and suppose 
the head of the paths $b_1^{(\iota)}\cdots b_{q_{\iota}}^{(\iota)}$ is $\mu$ and the tail is $i$. 
Replace each arrow $a_{\iota}$ in the proof of Proposition~\ref{prop:k-Kronecker-setting} with the path $a_1^{(\iota)}\cdots a_{p_{\iota}}^{(\iota)}$
and replace each arrow $b_{\gamma}$ in the proof with the path $b_1^{(\gamma)}\cdots b_{q_{\gamma}}^{(\gamma)}$. 
Since the proof of Proposition~\ref{prop:k-path-Kronecker-setting} is similar to the proof of Proposition~\ref{prop:k-Kronecker-setting}, 
we will omit the proof of Proposition~\ref{prop:k-path-Kronecker-setting}.

\begin{lemma}\label{lemma:k-star-shaped-setting}
Let $Q$ be a star-shaped quiver, where $Q$ has at least two arrows. Then  
\[ 
\dim F^{\bullet}Rep(Q,\beta)/\!\!/\mathbb{U} = \dim F^{\bullet}Rep(Q,\beta)-\dim \mathbb{U}. 
\]   
\end{lemma}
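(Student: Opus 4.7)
The plan is to reduce the dimension equality to Corollary~\ref{cor:Grosshans-dim-count} by exhibiting a representation $W\in F^{\bullet}Rep(Q,\beta)$ whose $\mathbb{U}$-stabilizer is trivial. Denote by $c$ the central vertex of $Q$ and let $L_1,\ldots,L_k$ be its legs, with vertices $v_{j,0}=c,v_{j,1},\ldots,v_{j,s_j}$ and arrows $e_{j,1},\ldots,e_{j,s_j}$ along $L_j$. Since $Q$ is star-shaped with at least two arrows we have $k\ge 2$, so $c$ is incident to at least two arrows, one entering or leaving along each leg (the degenerate case $k=1$ is an oriented $A_{s_1+1}$-Dynkin quiver, covered by the generalization of Proposition~\ref{prop:A1-Dynkin-quiver} recorded in \cite{Im-doctoral-thesis}).

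I would choose $W$ by representing each arrow $e_{j,m}$ by an invertible upper-triangular matrix with algebraically generic diagonal entries; then the matrix product
\[
P_j \;=\; W(e_{j,s_j})\cdots W(e_{j,1})
\]
obtained by traversing the $j$-th leg (reordered according to its orientation) is again invertible upper-triangular with algebraically generic diagonal entries, and for distinct $j\neq j'$ the pair of diagonals of $P_j$ and $P_{j'}$ satisfies the genericity hypothesis $(P_j)_{\iota\iota}(P_{j'})_{\iota+1,\iota+1}-(P_j)_{\iota+1,\iota+1}(P_{j'})_{\iota\iota}\neq 0$ required in the proof of Proposition~\ref{prop:k-path-Kronecker-setting}. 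Step 1 of the argument is to interpret $c$ together with the collection of paths obtained by composing arrows along the legs as a local $k$-generalized path Kronecker configuration at $c$, and then adapt the superdiagonal induction of Proposition~\ref{prop:k-path-Kronecker-setting} to conclude $u^{(c)}=I_n$. Step 2 is to propagate triviality outward along each leg: assuming inductively that $u^{(v_{j,m-1})}=I_n$, the stabilizer condition for $e_{j,m}$ collapses (up to orientation) to $u^{(v_{j,m})}W(e_{j,m})=W(e_{j,m})$ or $W(e_{j,m})(u^{(v_{j,m})})^{-1}=W(e_{j,m})$, and invertibility of $W(e_{j,m})$ forces $u^{(v_{j,m})}=I_n$. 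Iterating along every leg yields $u^{(v)}=I_n$ at every vertex $v$, so $\Stab_{\mathbb{U}}(W)=\{I_n^{\oplus Q_0}\}$ and Corollary~\ref{cor:Grosshans-dim-count} gives the claimed dimension equality.

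The main obstacle is Step 1. Unlike the pure generalized Kronecker setting, the $k$ legs of the star meet $c$ at distinct leaf vertices, so the linearized system extracted at each superdiagonal level of the stabilizer equations carries one more unknown (namely $u^{(c)}_{\iota,\iota+\gamma}$) than it does equations coming from a single level. One must therefore combine the superdiagonal analysis at $c$ with the inductive propagation along the legs in Step 2, feeding the triviality of $u$ at interior vertices back into the central equations, in order to assemble a genuinely overdetermined linear system whose only solution is the trivial one. The verification of this overdetermination is a bookkeeping computation entirely analogous in spirit to---but more intricate than---the induction in the proof of Proposition~\ref{prop:k-path-Kronecker-setting}, and reduces at each step to the nonvanishing of a $2\times 2$ determinant in the generic diagonal entries, which holds by the choice of $W$ on a Zariski-dense open subset of $F^{\bullet}Rep(Q,\beta)$.
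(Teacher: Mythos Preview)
Your approach has a genuine gap: for a star-shaped quiver there is \emph{no} point $W\in F^{\bullet}Rep(Q,\beta)$ with $\Stab_{\mathbb{U}}(W)=\{\I\}$, so Corollary~\ref{cor:Grosshans-dim-count} is not available. The paper says exactly this in the paragraph immediately preceding the proof: ``It is trivial to show the nonexistence of a point in a filtered representation space of a star-shaped quiver ($|Q_1|>1$) such that it has a trivial stabilizer subgroup.'' The reason is structural. A star-shaped quiver is a tree, so $|Q_0|=|Q_1|+1$. At each fixed superdiagonal level the stabilizer equations for $u.W=W$ contribute one linear relation per arrow among the $|Q_0|$ unknowns $u^{(v)}_{\iota,\iota+\gamma}$; since $|Q_1|<|Q_0|$, this homogeneous system is underdetermined for \emph{every} choice of $W$, and nontrivial solutions always exist. (Already for $A_3$ with $n=2$ one checks directly that the stabilizer is one-dimensional for generic $W$.)

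Your Step~1 is where the argument breaks: the $k$ legs of a star connect $c$ to $k$ \emph{distinct} leaf vertices, so they do not form a local $k$-generalized path Kronecker configuration in the sense of Definition~\ref{def:locally-k-Kronecker-quiver}, which requires all paths to run between the \emph{same} pair of vertices $i$ and $j$. Proposition~\ref{prop:k-path-Kronecker-setting} therefore does not apply, and the overdetermination you hope to assemble by ``feeding back'' the leg inductions never materializes---the count above shows the combined system remains underdetermined at every level.

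The paper's proof proceeds along an entirely different route: it invokes Theorem~1.1 of \cite{Im-filtered-semi-invariants}, which for quivers with at most two pathways between any two vertices (in particular, star-shaped quivers) gives the explicit description $\mathbb{C}[F^{\bullet}Rep(Q,\beta)]^{\mathbb{U}}\cong\mathbb{C}[\mathfrak{t}^{\oplus Q_1}]$, and then compares the Krull dimension of this ring directly with $\dim F^{\bullet}Rep(Q,\beta)-\dim\mathbb{U}$.
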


It is trivial to show the nonexistence of a point in a filtered representation space of a star-shaped quiver   
($|Q_1| > 1$) such that it has a trivial stabilizer subgroup; this is easily shown by proving that every point has a stabilizer subgroup of dimension greater than $0$. 
However for Lemma~\ref{lemma:k-star-shaped-setting}, we will use a fact from Im's dissertation (cf. Theorem 5.3.1 in \cite{Im-doctoral-thesis}):   
only the diagonal entries of the filtered representation space of a star-shaped quiver  produce unipotent invariants.

\begin{proof}
A star-shaped quiver is a quiver with at most two distinct pathways between any two vertices. 
By Theorem 1.1 in \cite{Im-filtered-semi-invariants}, 
$\mathbb{C}[F^{\bullet}Rep(Q,\beta)]^{\mathbb{U}}\cong \mathbb{C}[\mathfrak{t}^{\oplus Q_1}]$. 
This means $\dim \mathbb{C}[F^{\bullet}Rep(Q,\beta)]^{\mathbb{U}} = \dim \mathbb{C}[\mathfrak{t}^{\oplus Q_1}] = n|Q_1|$, 
and on the other hand,  
$\dim F^{\bullet}Rep(Q,\beta)-\dim \mathbb{U} = n|Q_1|$. 
\end{proof}

We now give the proof of Theorem~\ref{theorem:finite-dim}
\begin{proof}
Assume that $Q$ has at least one arrow and by assumption, $Q$ is not the $A_2$-Jordan quiver.  
We will first prove that there is a point $v$ in $F^{\bullet}Rep(Q,\beta)$ such that the stabilizer group of $v$ is trivial. 
Let $i\in Q_0$ be a vertex of $Q$. Since $Q$ is finite, the vertex $i$ is connected to a finite number of the following types of arrows: 
\begin{enumerate} 
\item arrows $a_1,\ldots, a_p$ such that only $ha_j = i$ for all $1\leq j\leq p$, 
\item arrows $b_1, \ldots, b_q$ such that only $tb_j = i$ for all $1\leq j \leq q$,  and 
\item arrows $c_1,\ldots, c_r$ such that $hc_j = t c_j =i$ for all $1\leq j\leq r$. 
\end{enumerate} 
Let $\mu_1,\ldots, \mu_p$ be the tail of $a_1,\ldots, a_p$, respectively. 
If $\mu_j=\mu_k$ for some $a_j\not= a_k$, then we replace $\mu_1,\ldots, \mu_p$ with 
the set $\mu_1,\ldots, \widehat{\mu_k}, \ldots, \mu_p$, where the hat over $\mu_k$ denotes that it has been omitted. 
Since $p$ is finite, repeat this procedure finitely-many times so that  
$\mu_1,\ldots , \mu_{p'}$ are pairwise distinct vertices. 

Now let $\nu_1,\ldots, \nu_q$ be the head of the arrows $b_1,\ldots, b_q$, respectively. 
Similar as before, replace the set $\nu_1,\ldots, \nu_q$ of vertices with $\nu_1,\ldots, \nu_{q'}$ such that the latter forms pairwise distinct vertices of $Q$. 
Now, combine $\mu_1,\ldots, \mu_{p'}, \nu_1,\ldots, \nu_{q'}$ and replace this set with 
$\mu_1,\ldots, \mu_{p''}, \nu_1,\ldots, \nu_{q''}$  
so that no vertex is listed more than once. 
 
Fix a basis in $F^{\bullet}Rep(Q,\beta)$ such that we have the identification: $F^{\bullet}Rep(Q,\beta) \cong \mathfrak{b}^{\oplus Q_1}$.
Let us write the $Q_0$-tuple in $\mathbb{U}$ as: 
\begin{equation}
u=(u^{(1)}, \ldots, u^{(i)}, \ldots, u^{(\mu_1)},\ldots,  u^{(\mu_{p''})}, \ldots, u^{(\nu_1)},\ldots, u^{(\nu_{q''})}, \ldots, u^{(Q_0)})\in \mathbb{U} \cong U^{\oplus Q_0}, 
\end{equation}  
where $u^{(i)}$ is the $n\times n$ unipotent matrix at vertex $i$ whose entries are: 
\[ 
(u^{(i)})_{\iota\gamma} = 
\begin{cases} 
u_{\iota \gamma}^{(i)} &\mbox{ if } \iota < \gamma, \\ 
1 &\mbox{ if }\iota = \gamma, \\ 
0 &\mbox{ otherwise}. 
\end{cases} 
\]  
 
For $W\in F^{\bullet}Rep(Q,\beta)$, 
we have the following group actions: since arrows $a_j$, $b_j$, and $c_j$ are connected to vertex $i$, we have 
\begin{enumerate}
\item $u.W(a_j) = u^{(i)} W(a_j) (u^{(\mu_j)})^{-1}$, 
\item $u.W(b_j) = u^{(\nu_j)} W(b_j) (u^{(i)})^{-1}$, 
\item $u.W(c_j) = u^{(i)}W(c_j) (u^{(i)})^{-1}$. 
\end{enumerate}
Here, we used the notation that $\mu_j$ is the tail of the arrow $a_j$ and 
$\nu_j$ is the head of the arrow $b_j$ (thus, $\mu_j$'s and 
$\nu_j$'s are not necessarily pairwise distinct). 
Since $F^{\bullet}Rep(Q,\beta)$ is a filtered representation space, 
\[ 
\begin{aligned}
W(a_j)_{\iota \gamma} 
	&= 
	\begin{cases} 
	x_{\iota \gamma}^{(j)} &\mbox{ if } \iota\leq \gamma, \\ 
	0 &\mbox{ otherwise}, \\ 
	\end{cases} 		\\  
W(b_j)_{\iota \gamma}  
	&=  
	\begin{cases} 
	y_{\iota \gamma}^{(j)}  &\mbox{ if } \iota\leq \gamma, \\ 
	0 &\mbox{ otherwise}, \\ 
	\end{cases} 		\\  
W(c_j)_{\iota \gamma} 
	&=  
	\begin{cases} 
	z_{\iota \gamma}^{(j)}  &\mbox{ if } \iota\leq \gamma, \\ 
	0 &\mbox{ otherwise}. \\ 
	\end{cases} 		\\  
\end{aligned}  
\]   
Consider $\mathbb{U}\times F^{\bullet}Rep(Q,\beta)|_{\mathfrak{b}_{a_j}}\rightarrow F^{\bullet}Rep(Q,\beta)|_{\mathfrak{b}_{a_j}}$, 
where $F^{\bullet}Rep(Q,\beta)|_{\mathfrak{b}_{a_j}}$ $:=$ 
$F^{\bullet}Rep((\{i,\mu_j \}, \{ a_j\} ), (n,n)) 
\cong \mathfrak{b}$, sending  
\[ 
\begin{aligned} 
&(u,W)\mapsto u.W(a_j) = u^{(i)} W(a_j) (u^{(\mu_j)})^{-1}\\ 
&=  
\left(  
\begin{smallmatrix} 
x_{11}^{(j)} & \cdots & \displaystyle{\sum_{\stackrel{1\leq \iota \leq \gamma_1<\cdots < \gamma_m =\gamma}{1\leq m \leq \gamma}}} (-1)^{\deg(v)} u_{1\iota}^{(i)} x_{\iota \gamma_1}^{(j)} u_{\gamma_1\gamma_2}^{(\mu_j)} \cdots u_{\gamma_{m-1},\gamma}^{(\mu_j)} & \cdots &   \displaystyle{\sum_{
\stackrel{1\leq \iota \leq \gamma_1<\cdots < \gamma_m =n}{1 \leq m \leq n}}}  (-1)^{\deg(v)}  
u_{1\iota}^{(i)} x_{\iota \gamma_1}^{(j)} u_{\gamma_1\gamma_2}^{(\mu_j)} \cdots u_{\gamma_{m-1}n}^{(\mu_j)}  \\  
\vdots & \cdots & \vdots & \cdots & \vdots \\  
0 & \cdots & x_{\gamma\gamma}^{(j)} &  \cdots & 
 \displaystyle{\sum_{\stackrel{\gamma \leq \iota \leq \gamma_1<\cdots < \gamma_m = n }{1 \leq m \leq \gamma }}} 
(-1)^{\deg(v)} u_{\gamma\iota}^{(i)} x_{\iota \gamma_1}^{(j)} u_{\gamma_1\gamma_2}^{(\mu_j)} \cdots u_{\gamma_{m-1},n}^{(\mu_j)} \\ 
\vdots & \cdots & \vdots & \cdots & \vdots \\  
0 & \cdots & 0 & \cdots & x_{nn}^{(j)} \\  
\end{smallmatrix}\right),  
\end{aligned} 
\]   
where $v = u_{\gamma_1\gamma_2}^{(\mu_j)} \cdots u_{\gamma_{m-1}\gamma_m}^{(\mu_j)}$, the product of $u_{\alpha\beta}^{(\mu_j)}$ obtained from the group representation at vertex $\mu_j$. 
In the case when 
$m=1$, then we have 
$u_{\gamma\iota}^{(i)} x_{\iota \gamma_1}^{(j)} u_{\gamma_1\gamma_2}^{(\mu_j)}\cdots u_{\gamma_{m-1},\gamma_m}^{(\mu_j)}:= 
u_{\gamma\iota}^{(i)} x_{\iota \gamma_1}^{(j)}$; thus the degree of $v$ in $u_{\gamma\iota}^{(i)} x_{\iota \gamma_1}^{(j)} u_{\gamma_1\gamma_2}^{(\mu_j)}\cdots u_{\gamma_{m-1},\gamma_m}^{(\mu_j)}$ is zero when $m=1$.

Secondly, consider the map 
$\mathbb{U}\times F^{\bullet}Rep(Q,\beta)|_{\mathfrak{b}_{b_j}}\rightarrow F^{\bullet}Rep(Q,\beta)|_{\mathfrak{b}_{b_j}}$, 
where $F^{\bullet}Rep(Q,\beta)|_{\mathfrak{b}_{b_j}}:= F^{\bullet}Rep((\{\nu_j, i \}, \{ b_j\} ), (n,n)) \cong \mathfrak{b}$, sending  
\[ 
\begin{aligned} 
&(u,W)\mapsto u.W(b_j) =  u^{(\nu_j)} W(b_j) (u^{(i)})^{-1}   \\ 
&= 
\begin{psmallmatrix} 
y_{11}^{(j)} & \cdots & \displaystyle{\sum_{\stackrel{1\leq \iota \leq \gamma_1<\cdots < \gamma_m =\gamma}{1\leq m \leq \gamma}}} (-1)^{\deg(w)} u_{1\iota}^{(\nu_j)} y_{\iota \gamma_1}^{(j)} u_{\gamma_1\gamma_2}^{(i)} \cdots u_{\gamma_{m-1},\gamma}^{(i)} & \cdots &   \displaystyle{\sum_{
\stackrel{1\leq \iota \leq \gamma_1<\cdots < \gamma_m =n}{1 \leq m \leq n}}} (-1)^{\deg(w)} u_{1\iota}^{(\nu_j)} y_{\iota \gamma_1}^{(j)} u_{\gamma_1\gamma_2}^{(i)} \cdots u_{\gamma_{m-1}n}^{(i)}  \\  
\vdots & \cdots & \vdots & \cdots & \vdots \\  
0 & \cdots & y_{\gamma\gamma}^{(j)} &  \cdots & 
 \displaystyle{\sum_{\stackrel{\gamma \leq \iota \leq \gamma_1<\cdots < \gamma_m = n }{1 \leq m \leq \gamma }}} 
(-1)^{\deg(w)} u_{\gamma\iota}^{(\nu_j)} y_{\iota \gamma_1}^{(j)} u_{\gamma_1\gamma_2}^{(i)} \cdots u_{\gamma_{m-1},n}^{(i)} \\ 
\vdots & \cdots & \vdots & \cdots & \vdots \\  
0 & \cdots & 0 & \cdots & y_{nn}^{(j)} \\  
\end{psmallmatrix},   \\ 
\end{aligned} 
\]  
where 
$w = u_{\gamma_1\gamma_2}^{(i)} \cdots u_{\gamma_{m-1}\gamma_m}^{(i)}$, the product of $u_{\alpha\beta}^{(i)}$ appearing in the group representation at vertex $i$. 
Similar as before, 
the degree of $w$ in 
$u_{\gamma\iota}^{(\nu_j)} x_{\iota \gamma_1}^{(j)} u_{\gamma_1\gamma_2}^{(i)}\cdots u_{\gamma_{m-1},\gamma_m}^{(i)}$ 
is zero if $m=1$.

Finally, the map 
$\mathbb{U}\times F^{\bullet}Rep(Q,\beta)|_{\mathfrak{b}_{c_j}}\rightarrow F^{\bullet}Rep(Q,\beta)|_{\mathfrak{b}_{c_j}}$, 
where $F^{\bullet}Rep(Q,\beta)|_{\mathfrak{b}_{c_j}}:=F^{\bullet}Rep((\{  i \}, \{  c_j\} ), n)\cong \mathfrak{b}$, sends 
\[  
\begin{aligned}  
&(u,W)\mapsto u.W(c_j) = u^{(i)}W(c_j) (u^{(i)})^{-1} \\   
&=  
\begin{psmallmatrix} 
z_{11}^{(j)} & \cdots & \displaystyle{\sum_{\stackrel{1\leq \iota \leq \gamma_1<\cdots < \gamma_m =\gamma}{1\leq m \leq \gamma}}} (-1)^{\deg(\omega)} u_{1\iota}^{(i)} z_{\iota \gamma_1}^{(j)} u_{\gamma_1\gamma_2}^{(i)}\cdots u_{\gamma_{m-1},\gamma}^{(i)} & \cdots &   \displaystyle{\sum_{
\stackrel{1\leq \iota \leq \gamma_1<\cdots < \gamma_m =n}{1 \leq m \leq n}}} (-1)^{\deg(\omega)} 
u_{1\iota}^{(i)} z_{\iota \gamma_1}^{(j)} u_{\gamma_1\gamma_2}^{(i)} \cdots u_{\gamma_{m-1}n}^{(i)}  \\  
\vdots & \cdots & \vdots & \cdots & \vdots \\  
0 & \cdots & z_{\gamma\gamma}^{(j)} &  \cdots & 
 \displaystyle{\sum_{\stackrel{\gamma \leq \iota \leq \gamma_1<\cdots < \gamma_m = n }{1 \leq m \leq \gamma }}} 
(-1)^{\deg(\omega)} u_{\gamma\iota}^{(i)} z_{\iota \gamma_1}^{(j)} u_{\gamma_1\gamma_2}^{(i)} \cdots u_{\gamma_{m-1},n}^{(i)} \\ 
\vdots & \cdots & \vdots & \cdots & \vdots \\  
0 & \cdots & 0 & \cdots & z_{nn}^{(j)} \\  
\end{psmallmatrix},    \\ 
\end{aligned}   
\]   
where $\omega = u_{\gamma_1\gamma_2}^{(i)} \cdots u_{\gamma_{m-1}\gamma_m }^{(i)}$, 
the product of $u_{\alpha\beta}^{(i)}$ appearing in the group representation at vertex $i$ under the condition that the second index $\beta$ of $u_{\alpha\beta}^{(i)}$ must be greater than $\gamma_1$.

Now suppose vertex $i$ has a loop or is one of the vertices of a local $k$-generalized Kronecker quiver.   
Let $W$ be a representation in $F^{\bullet}Rep(Q,\beta)$ satisfying the following conditions: 
for each arrow $d_j\in Q_1$, $W(d_j)$ has pairwise distinct, nonzero eigenvalues, and 
$x_{\iota\iota}^{(j')}x_{\iota+1,\iota+1}^{(j)}-x_{\iota\iota}^{(j)}x_{\iota+1,\iota+1}^{(j')}\not=0$, 
$y_{\iota\iota}^{(j')}y_{\iota+1,\iota+1}^{(j)}-y_{\iota\iota}^{(j)}y_{\iota+1,\iota+1}^{(j')}\not=0$, 
and  
$x_{\iota\iota}^{(j')}y_{\iota\iota}^{(j)} - y_{\nu\nu}^{(j)}x_{\nu\nu}^{(j')}\not=0$    
for all $j\not=j'$   
and $1\leq \iota <\nu \leq  n$.

Consider one of the loops at vertex $i$, say $c_j$.  
We will analyze both sides of the equation: 
\begin{equation}\label{eq:stabilizer-subgroup-loops}
u^{(i)}W(c_j)(u^{(i)})^{-1} = W(c_j).   
\end{equation}

First consider level $0$ diagonal entries of the left hand side in \eqref{eq:stabilizer-subgroup-loops}. Since these entries are invariant under conjugation by a unipotent subgroup, 
we are done since the diagonal entries of $W(c_j)$ are also $z_{\iota\iota}^{(j)}$.   
Thus, consider $1$-superdiagonal entries, i.e., $(\iota,\iota+1)$-entries, which are of the form:   
\begin{equation}\label{eq:level1-superdiagonal} 
z_{\iota, \iota+1}^{(j)} + u_{\iota, \iota+1}^{(i)} (z_{\iota+1,\iota+1}^{(j)} - z_{\iota \iota}^{(j)}).  
\end{equation}   
Since we want to find the stabilizer subgroup of $W$, 
we set the expression in \eqref{eq:level1-superdiagonal} equal to $z_{\iota, \iota+1}^{(j)}$. 
Since  
$z_{\iota \iota}^{(j)}\not= z_{\iota+1,\iota+1}^{(j)}$, we see that $u_{\iota, \iota+1}^{(i)} =0$ for each $1\leq \iota\leq n-1$.  
Next, assume complete induction on $k$-superdiagonal:  
this means $u_{\iota, \iota+\gamma}^{(i)}=0$ for all $1\leq \iota \leq n-k$ and $1\leq \gamma\leq k$.   
On the $(k+1)$-superdiagonal,   
we have  
\begin{equation}\label{eq:loops-level-kplus1-inductive-step}  
 \displaystyle{\sum_{\stackrel{\iota \leq \iota' \leq \gamma_1<\cdots < \gamma_m = \iota + k + 1 }{1 \leq m \leq \iota }}} 
(-1)^{\deg(\omega)} u_{\iota\iota'}^{(i)} z_{\iota' \gamma_1}^{(j)} u_{\gamma_1\gamma_2}^{(i)} \cdots u_{\gamma_{m-1},\iota+k+1}^{(i)}  
= z_{\iota, \iota+k+1}^{(j)}.  
\end{equation}  
If $m\geq 3$, then the difference between the indices $\gamma'$ and $\gamma''$ for $u_{\gamma'\gamma''}^{(i)}$ must be strictly less than $k+1$; all such terms vanish by complete induction. Thus we are left with those terms when $m\leq 2$, i.e., 
\[ 
 \displaystyle{\sum_{   \iota \leq \iota' \leq \gamma_1 = \iota + k + 1 }} 
(-1)^{\deg(\omega)} u_{\iota\iota'}^{(i)} z_{\iota' , \iota+ k+1}^{(j)}      
+  
 \displaystyle{\sum_{   \iota \leq \iota' \leq \gamma_1 < \gamma_2 = \iota + k + 1 }} 
(-1)^{\deg(\omega)} u_{\iota\iota'}^{(i)} z_{\iota' \gamma_1}^{(j)} u_{\gamma_{1},\iota+k+1}^{(i)}  
= z_{\iota, \iota+k+1}^{(j)}.    
\]   
From the first sum, two cases when $u_{\iota\iota'}^{(i)}\not=0$ are when $\iota=\iota'$ and when $\iota'=\iota+k+1$.   
From the second sum, the only case when 
$u_{\gamma_{1},\iota+k+1}^{(i)} \not= 0$ is when $\gamma_1=\iota$ for   
then $\iota+k+1-\gamma_1 > k$.   
Putting this together, we have 
\[   
\bcancel{u_{\iota\iota}^{(i)} z_{\iota, \iota+ k+1}^{(j)}}
 + u_{\iota,\iota+k+1}^{(i)} z_{\iota+k+1, \iota+ k+1}^{(j)}  
-      
u_{\iota\iota}^{(i)} z_{\iota \iota}^{(j)} u_{\iota ,\iota+k+1}^{(i)}    
=\bcancel{ z_{\iota, \iota+k+1}^{(j)}}, 
\] 
or 
\[ 
u_{\iota ,\iota+k+1}^{(i)}   
( z_{\iota+k+1, \iota+ k+1}^{(j)}  
-      
z_{\iota \iota}^{(j)} ) = 0.  
\] 
Since $z_{\iota\iota}^{(j)}$ are pairwise distinct, $u_{\iota ,\iota+k+1}^{(i)}=0$ and since $\iota$ is arbitary, we are done. 

Now, using the fact that $u^{(i)}$ is the identity matrix representation at vertex $i$, we will prove that 
$u^{(\nu_j)}=\I$ for all $1\leq j\leq q''$ and   
$u^{(\mu_j)}=\I$ for all $1\leq j\leq p''$.  

So consider 
\begin{equation}\label{eq:conjugation-on-bj-first-case}
  u^{(\nu_j)} W(b_j) (u^{(i)})^{-1} = W(b_j). 
\end{equation}  
We have shown that $u^{(i)}=\I$, so Equation~\eqref{eq:conjugation-on-bj-first-case} simplifies to  
\[  u^{(\nu_j)} W(b_j)  = W(b_j).  
\]  
Since the eigenvalues of $W(b_j)$ are nonzero,   $W(b_j)$ is invertible. 
Thus, we have $u^{(\nu_j)}=\I$ for all $1\leq j\leq q''$. 

Similarly, consider 
\begin{equation}\label{eq:conjugation-on-aj-first-case} 
  u^{(i)} W(a_j) (u^{(\mu_j)})^{-1} = W(a_j).   
\end{equation}
We know that  $u^{(i)}$  is the identity matrix and $W(a_j)$ is invertible.   
Thus, Equation~\eqref{eq:conjugation-on-aj-first-case} simplifies as $u^{(\mu_j)}=\I$ for all $1\leq j\leq p''$.   
Thus, we conclude that all the unipotent matrix representations locally near vertex $i$ are identity matrices. 
We repeat similar analysis to the other $|Q_0|-1$ vertices by extending outward from vertex $i$ and using induction at those vertices $j$ such that $d(i,j)=k$ or $d(j,i)=k$ for each $k>1$. 

Now, if there are no loops at vertex $i$, i.e., $r=0$ at all the vertices of $Q$,  
then by assumption, some of the arrows connected to $i$ form a local $k$-generalized path Kronecker quiver, where $k>1$.  Without loss of generality, name the other vertex connected to these local $k$-generalized path Kronecker quiver as $\mu$. 
By Proposition~\ref{prop:k-path-Kronecker-setting} and holding all the other unipotent group representations at vertices $\nu\not= i,\mu$ fixed (since path representations connecting vertices $i$ and $\mu$ only affect the group representations at vertices $i$ and $\mu$),   
we see that there is a point in the filtered representation space of $Q$ such that its stabilizer representations at vertices $i$ and $\mu$ are trivial. 
We then repeat a similar argument as above by extending outward from vertex $i$ and $\mu$ systematically by increasing quiver geodesic distance.

We thus conclude that there is a point in the filtered representation subspace such that its unipotent stabilizer is the trivial subgroup of $\mathbb{U}$.

Finally, 
(\ref{item:conclusion-two}) follows from (\ref{item:conclusion-one})   
and Corollary~\ref{cor:Grosshans-dim-count}.  
\end{proof}

\begin{remark} 
Suppose $Q$ is connected with at least two arrows. 
In the case if $Q$ does not have a loop or a local $k$-generalized path Kronecker subquiver, then   
$Q$ must have a local $1$-step star-shaped quiver at some vertex.  
In fact, we can deduce more than this: we have a quiver with at most one pathway from one vertex to another for if $Q$ has two or more pathways from one vertex to another, then $Q$ has a local $k$-generalized path Kronecker quiver ($k\geq 2$) or $Q$ has a loop, which is a contradiction. 
By Theorem 1.1 in \cite{Im-filtered-semi-invariants}, we have an isomorphism 
$\mathbb{C}[F^{\bullet}Rep(Q,\beta)]^{\mathbb{U}_{\beta}}\cong \mathbb{C}[\mathfrak{t}^{\oplus Q_1}]$ of algebras.   
\end{remark}

\section{Special Case: $2\times 2$ $k$-Jordan filtered representations}\label{section:Jordan-quivers}

Although it is possible to write down an explicit description of the $B$-orbits of $k$-Jordan filtered representations, we will give a description of the algebra of the $U$-orbits of the $k$-Jordan when $\beta =2$.

\begin{example} 
Let $Q$ be the $k$-Jordan quiver and let $\beta=2$.   
Then the algebra $\mathbb{C}[F^{\bullet}Rep(Q,\beta)/\!\!/U]$
of unipotent invariant polynomials is isomorphic to 
\[ 
\mathbb{C}[\{c_{\iota\iota}^{(\gamma)}, c_{12}^{(\nu)}(c_{11}^{(\mu)} - c_{22}^{(\mu)}) 
- c_{12}^{(\mu)}(c_{11}^{(\nu)} - c_{22}^{(\nu)} )  :1\leq \iota \leq 2, 1\leq \gamma\leq k, 1\leq \mu <\nu\leq k\} ]. 
\]  
Let $\mathfrak{h}_2	\subseteq \mathfrak{b}_2$, the set of $2\times 2$ upper triangular matrices over the complex numbers. 
Under the $B$-action, the polynomial 
$f(W(c_1),\ldots, W(c_k) )=c_{12}^{(\nu)}(c_{11}^{(\mu)} - c_{22}^{(\mu)}) 
- c_{12}^{(\mu)}(c_{11}^{(\nu)} - c_{22}^{(\nu)} )$ is a $b_{11}b_{22}^{-1}$-semi-invariant. 
Thus, the affine quotient of the Jordan quiver is 
$F^{\bullet}Rep(Q,2)/\!\!/B = \mathfrak{b}_2^{k}/\!\!/B \cong \spec(\mathbb{C}[\mathfrak{h}_2^{\oplus k}])\cong \mathbb{C}^{2k}$
while the GIT quotient 
$F^{\bullet}Rep(Q,2)/\!\!/_{\chi_{\iota}}B   $ is a projective scheme over $\mathbb{C}^{2k}$, where $\chi_{\iota}:B\rightarrow \mathbb{C}^*$ is a group homomorphism sending $(b_{\mu\nu})\mapsto b_{\iota\iota}b_{\iota+1,\iota+1}^{-1}$.  
\end{example}

\section{The geometry of quiver Grothendieck-Springer resolutions}\label{section:geometry} 
\subsection{The Hamiltonian reduction of almost-commuting varieties for the Borel}
Consider the moment map $\mu_B:T^*(\mathfrak{b}\times \mathbb{C}^n) \cong \mathfrak{b}\times \mathfrak{b}^* \times \mathbb{C}^n \times (\mathbb{C}^n)^*
\rightarrow \mathfrak{b}^*\cong \mathfrak{g}/\mathfrak{n}$, where $(r,s,i,j)\mapsto [r,s]+\overline{ij}$ and  
$\mathfrak{n}$ is the set of strictly upper triangular matrices and $(\mathbb{C}^n)^* $ is the dual of $\mathbb{C}^n$.  
Let 
\[ s_{\iota\iota}'=
 \left[ \tr\left( \prod_{
1\leq k\leq n, k\not=\iota 
} l_k(r) \right)\right]^{-1} 
  \tr \left(\prod_{
1\leq k\leq n, k\not=\iota
} l_k(r) \:s\right),  
\] where $l_k(r) = r -r_{kk}\I$ (cf. Proposition 1.2 in \cite{Im-rss-locus-GS-resolution}).

We describe the regular semisimple locus of generalized almost-commuting variety: 
\begin{proposition}[\cite{Im-rss-locus-GS-resolution}, Proposition 1.4]\label{prop:Im-rss-locus}
The Hamiltonian reduction $\mu_B^{-1}(0)^{rss}/\!\!/B$ is reduced and isomorphic to $\mathbb{C}^{2n}\setminus \Delta_n$, where 
$\Delta_n=\{ (x_1,\ldots, x_n,0,\ldots, 0): x_{\iota} = x_{\gamma} \mbox{ for some }\iota\not=\gamma\}$. 
So $A_0=\mathbb{C}[\mu_B^{-1}(0)^{rss}/\!\!/B] \cong \mathbb{C}[r_{11},\ldots, r_{nn}, s_{11}',\ldots, s_{nn}'][(r_{\nu\nu}-r_{\gamma\gamma})^{-1}]$, 
and the Poisson bracket on the commutative algebra $A_0$ is induced from the standard symplectic structure on $\mathbb{C}^{2n}$. 
\end{proposition}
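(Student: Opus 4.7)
The plan is to exploit the rss hypothesis to conjugate $r$ into diagonal form, collapsing the $B$-quotient to a quotient by the maximal torus $T$, and then to solve the remaining moment equations explicitly. First I would identify $\mathfrak{b}^* \cong \mathfrak{g}/\mathfrak{n}$ with lower triangular (including diagonal) matrices, so that $\mu_B(r,s,i,j)=0$ says the lower-triangular part of $[r,s]+ij^{\top}$ vanishes. On the rss locus $r$ has pairwise distinct diagonal entries, and an essentially unique $b\in B$ (modulo $T$) conjugates such an $r$ to diagonal form. Fixing this slice, a direct computation with $l_k(r)=r-r_{kk}\I$ gives $\tr(\prod_{k\neq \iota} l_k(r)) = \prod_{k\neq \iota}(r_{\iota\iota}-r_{kk})$, and the normalized trace defining $s_{\iota\iota}'$ extracts precisely the entry $s_{\iota\iota}$; in other words the $B$-invariants $(r_{\iota\iota}, s_{\iota\iota}')$ specialize to the genuine diagonal coordinates $(r_{\iota\iota}, s_{\iota\iota})$ on the slice.

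With $r$ diagonal, the moment map equations split cleanly. The diagonal portion forces $i_\iota j_\iota = 0$ for each $\iota$, while for $\iota > \gamma$ the strictly lower-triangular portion reads $(r_{\iota\iota}-r_{\gamma\gamma})s_{\iota\gamma} + i_\iota j_\gamma = 0$, and because the $r_{\iota\iota}$ are distinct this solves uniquely for $s_{\iota\gamma} = i_\iota j_\gamma/(r_{\gamma\gamma}-r_{\iota\iota})$. Thus the off-diagonal entries of $s$ are completely determined by $(r,i,j)$, and the slice is parametrized by the $2n$ diagonal coordinates $(r_{\iota\iota}, s_{\iota\iota})$ together with $n$ pairs $(i_\iota, j_\iota)$ subject only to $i_\iota j_\iota = 0$.

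It then remains to take the residual $T$-quotient. The torus acts trivially on $r_{\iota\iota}$ and $s_{\iota\iota}$ and with opposite weights on each pair $(i_\iota, j_\iota)$, so the computation localizes at each index. The categorical quotient of $\{ij=0\}\subset \mathbb{C}^2$ by the weight-$(1,-1)$ action of $\mathbb{C}^\times$ has invariant ring $\mathbb{C}[ij]/(ij)=\mathbb{C}$, a reduced point, so all of the $(i,j)$ data collapses and only the diagonal coordinates $(r_{\iota\iota}, s_{\iota\iota}')$ survive. This identifies the quotient with the open subscheme of $\mathbb{C}^{2n}$ on which $\prod_{\nu\neq \gamma}(r_{\nu\nu}-r_{\gamma\gamma})$ is invertible, yielding the coordinate ring $\mathbb{C}[r_{11},\ldots,r_{nn},s_{11}',\ldots,s_{nn}'][(r_{\nu\nu}-r_{\gamma\gamma})^{-1}]$ and exhibiting $\mu_B^{-1}(0)^{rss}/\!\!/B$ as $\mathbb{C}^{2n}\setminus \Delta_n$. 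Reducedness follows immediately, as a localization of a polynomial ring is reduced.

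The Poisson claim is automatic from the symplectic reduction framework: the canonical pairing on $T^*(\mathfrak{b}\times \mathbb{C}^n)$ yields the brackets $\{r_{\iota\iota}, s_{\gamma\gamma}\} = \delta_{\iota\gamma}$ between dual coordinates, and since $r_{\iota\iota}$ and $s_{\iota\iota}'$ are $B$-invariant functions restricting to $r_{\iota\iota}$ and $s_{\iota\iota}$ on the diagonal slice, the reduced brackets match the standard symplectic brackets on $\mathbb{C}^{2n}$ at the generators. The main delicate point, I expect, is verifying carefully that the $2^n$ local branches of $\{\prod_\iota i_\iota j_\iota = 0\}$ really do collapse to a single reduced point under the componentwise $T$-action, rather than contributing extra components or nilpotents in the quotient; this is the checkpoint securing both the dimension count and the reducedness claim.
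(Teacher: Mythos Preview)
The paper does not prove this proposition: it is quoted from \cite{Im-rss-locus-GS-resolution} (Proposition~1.4) and stated without argument, so there is no ``paper's own proof'' to compare against. That said, your approach is the natural one and is correct: pass to the diagonal slice for $r$ (valid because the $U$-action on the rss locus of $\mathfrak{b}$ is free and transitive onto diagonals), solve the lower-triangular moment equations for the off-diagonal $s_{\iota\gamma}$, observe that the diagonal equations impose $i_\iota j_\iota=0$, and then compute the residual $T$-quotient coordinatewise as $\bigl(\mathbb{C}[i,j]/(ij)\bigr)^{\mathbb{C}^\times}=\mathbb{C}$. Your verification that the invariant trace formula for $s_{\iota\iota}'$ specializes to $s_{\iota\iota}$ on the diagonal slice is exactly what is needed to match the coordinate-ring description.

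One remark: the description you obtain, namely the open set $\{r_{\iota\iota}\text{ pairwise distinct}\}\times\mathbb{C}^n\subset\mathbb{C}^{2n}$, agrees with the stated coordinate ring $\mathbb{C}[r_{\iota\iota},s_{\iota\iota}'][(r_{\nu\nu}-r_{\gamma\gamma})^{-1}]$, which is the substantive claim. The set $\Delta_n$ as written in the statement (with the trailing zeros) does not literally have this complement, but that is an artifact of the statement's notation rather than a gap in your argument.
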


\begin{remark} 
The algebraic closure of the semisimple locus $\mu_B^{-1}(0)^{rss}/\!\!/B$ is a symplectic manifold. 
\end{remark}

\subsection{Isospectral Hilbert scheme} 
Recall that the Hilbert scheme $\Hilb^n(\mathbb{C}^2)$ of $n$ points on a complex plane is given as the set of ideals $I\subseteq \mathbb{C}[x,y]$ such that the (complex) dimension of $\mathbb{C}[x,y]/I$ is $n$. 
There is another well-known ADHM description of $\Hilb^n(\mathbb{C}^2)$ (cf. \cite{MR2210660}, \cite{MR1711344}). 
Let $Z=\{ (r,s,i)\in \mathfrak{gl}_n\times \mathfrak{gl}_n^* \times \mathbb{C}^n: [r,s]=0 \mbox{ and } r^{\alpha}s^{\beta}i \mbox{ spans }\mathbb{C}^n\}$. 
The Lie group $G=GL_n(\mathbb{C})$ acts on $Z$ by $A.(r, s, i)=(ArA^{-1}, AsA^{-1}, Ai)$. 

\begin{theorem}[\cite{MR1711344}, Theorem 2.1]
We have an isomorphism $\Hilb^{n}(\mathbb{C}^2)\cong Z/G$, which is smooth and projective of dimension $2n$. 
\end{theorem}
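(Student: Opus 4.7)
The plan is to construct the isomorphism by exhibiting mutually inverse morphisms between $Z/G$ and $\Hilb^n(\mathbb{C}^2)$, then deduce smoothness from freeness of the $G$-action and compute the dimension by a standard parameter count.

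First I would define a $G$-equivariant morphism $\Phi \colon Z \to \Hilb^n(\mathbb{C}^2)$ by sending a triple $(r, s, i)$ to the ideal
\[
I_{(r,s,i)} := \{ p \in \mathbb{C}[x, y] : p(r, s)\, i = 0 \}.
\]
The relation $[r, s] = 0$ makes $p(r, s)$ well-defined for every polynomial, and the spanning condition forces the evaluation map $\mathbb{C}[x, y] \twoheadrightarrow \mathbb{C}^n$, $p \mapsto p(r, s)\, i$, to be surjective, so $\mathbb{C}[x, y]/I_{(r,s,i)}$ has dimension exactly $n$; $G$-equivariance is immediate from $p(ArA^{-1}, AsA^{-1})(Ai) = A \, p(r, s)\, i$. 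To build the inverse, given a colength-$n$ ideal $I$ I would set $V := \mathbb{C}[x, y]/I$, choose any linear isomorphism $V \cong \mathbb{C}^n$, and extract the triple $(r, s, i)$ where $r, s$ act as multiplication by $x$ and $y$ and $i$ is the image of $1 + I$. Different bases produce $G$-equivalent triples, so this descends to a map $\Psi \colon \Hilb^n(\mathbb{C}^2) \to Z/G$, and the compositions $\Phi \circ \Psi$ and $\Psi \circ \Phi$ are readily seen to be identities on ideals and on orbits respectively. To promote this bijection to a scheme-theoretic isomorphism I would exhibit the universal flat family $Z \times \mathbb{C}^n \to Z$ together with the tautological surjection $\mathcal{O}_{Z \times \mathbb{C}^2} \to Z \times \mathbb{C}^n$ given pointwise by evaluation at $(r, s, i)$; $G$-equivariance lets it descend to $Z/G$ and represent the Hilbert scheme functor.

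For smoothness and dimension, the decisive point is that the $G$-action on $Z$ is free: any stabilizer of $(r, s, i)$ must commute with both $r$ and $s$ and fix $i$, hence fix every element of $\mathrm{span}\{r^\alpha s^\beta i\} = \mathbb{C}^n$, so must equal the identity. Therefore $Z \to Z/G$ is a principal $G$-bundle, smoothness of $Z/G$ follows from smoothness of $Z$, and $\dim Z/G = \dim Z - \dim G = (n^2 + 2n) - n^2 = 2n$. The hard part is proving $Z$ itself is smooth, since the commuting variety $\{[r, s] = 0\} \subset \mathfrak{gl}_n \times \mathfrak{gl}_n$ is highly singular in general; the cleanest route is to realize $Z$ as the stable locus of the symplectic/GIT reduction of $T^*(\mathfrak{gl}_n \oplus \mathbb{C}^n)$ by $G$ with moment map $\mu(r, s, i, j) = [r, s] + ij$ at level zero, where the Nakajima quiver-variety framework delivers smoothness on the free locus together with the expected dimension $2n$. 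One mild caveat: the statement's word \emph{projective} should be read as projectivity of the Hilbert-Chow morphism $\Hilb^n(\mathbb{C}^2) \to \mathrm{Sym}^n(\mathbb{C}^2)$, since $\Hilb^n(\mathbb{C}^2)$ itself is only quasi-projective.
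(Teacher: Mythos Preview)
The paper does not prove this theorem; it merely cites it as Theorem 2.1 of \cite{MR1711344} (Nakajima's lectures on Hilbert schemes) and uses it as background for the discussion of the isospectral Hilbert scheme. So there is no ``paper's own proof'' to compare against.

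That said, your sketch is the standard argument from the cited reference and is essentially correct. A few remarks. Your dimension count $\dim Z = n^2 + 2n$ is right but not obvious from the naive equation count in $\mathfrak{gl}_n \times \mathfrak{gl}_n \times \mathbb{C}^n$, since the commutator equations $[r,s]=0$ are not a regular sequence; the clean justification is exactly the one you indicate, namely identifying $Z$ with the stable locus of $\mu^{-1}(0)$ for $\mu(r,s,i,j)=[r,s]+ij$ on $T^*(\mathfrak{gl}_n\oplus\mathbb{C}^n)$, where freeness of the $G$-action forces $\mu$ to be submersive and hence $\mu^{-1}(0)^{\mathrm{st}}$ smooth of dimension $2(n^2+n)-n^2=n^2+2n$. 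You should also record the standard lemma that on this stable locus one automatically has $j=0$, which is what makes the identification $\mu^{-1}(0)^{\mathrm{st}}\cong Z$ legitimate. Finally, your caveat about the word \emph{projective} is well taken: $\Hilb^n(\mathbb{C}^2)$ is only quasi-projective, and what is meant is projectivity of the Hilbert--Chow morphism over $\mathrm{Sym}^n(\mathbb{C}^2)$.
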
 
The scheme $X_n$ in the reduced fiber product  
\[ 
\xymatrix@-1pc{ 
X_n  \ar[dd]_{\rho}^{} \ar[rr] & & (\mathbb{C}^2)^n \ar[dd] \\ 
& & \\ 
\Hilb^n(\mathbb{C}^2) \ar[rr]^{\pi} & & S^n(\mathbb{C}^2) \\ 
}
\] 
is known as the isospectral Hilbert scheme, where $\rho$ is flat of degree $n!$. 

\begin{theorem}[\cite{MR1839919}] 
The scheme $X_n$ is flat over $\Hilb^n(\mathbb{C}^2)$. 
\end{theorem}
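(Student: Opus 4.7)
The plan is to invoke a miracle-flatness criterion once the Cohen-Macaulay property of $X_n$ has been established. The map $\rho\colon X_n\to \Hilb^n(\mathbb{C}^2)$ is finite, being obtained by base change from the finite Hilbert-Chow morphism $\pi$, and $\Hilb^n(\mathbb{C}^2)$ is smooth of dimension $2n$ by Fogarty's theorem. Hence once $X_n$ is known to be Cohen-Macaulay and equidimensional of dimension $2n$, miracle flatness forces $\rho$ to be flat, and computing on the open locus of distinct points identifies the generic degree as $n!$.

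The dimension and equidimensionality are the easy half: $\pi$ is generically \'etale of degree $n!$ over the open locus of $n$-tuples of distinct points, so the generic fibre of $\rho$ is an $S_n$-torsor and every irreducible component of $X_n$ has dimension exactly $2n$. The serious work goes into the Cohen-Macaulay property.

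The main obstacle, and essentially the entire content of the theorem, is showing that $X_n$ is Cohen-Macaulay. The approach I would follow is the polygraph method: for each pair $(n,\ell)$, introduce the polygraph
\[
Z(n,\ell) = \bigcup_{f\colon [\ell]\to [n]} \{(P_1,\ldots,P_n,Q_1,\ldots,Q_\ell)\in (\mathbb{C}^2)^{n+\ell} : Q_i = P_{f(i)} \text{ for every } i \},
\]
with its reduced scheme structure, and aim at the polygraph theorem: $R(n,\ell) := \mathcal{O}(Z(n,\ell))$ is free as a module over $A = \mathbb{C}[x_1,\ldots,x_n,y_1,\ldots,y_n]$. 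I would attempt this by producing an explicit combinatorial $A$-basis and verifying freeness by induction on $\ell$, via a delicate case analysis tracking what happens when one specializes the last $Q$-coordinate equal to each $P_j$ in turn and comparing with $R(n,\ell-1)$. With the polygraph theorem in hand, setting $\ell = n$ identifies $\mathcal{O}(X_n)$, locally over $\Hilb^n(\mathbb{C}^2)$, with a direct summand of the free $A$-module $R(n,n)$; combined with the freeness of $A$ over $A^{S_n}=\mathcal{O}(S^n(\mathbb{C}^2))$, a descent argument delivers the Cohen-Macaulay property for $X_n$ over $\Hilb^n(\mathbb{C}^2)$. Miracle flatness then closes the argument.

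The hard step, which accounts for essentially all of the depth of the theorem, is the polygraph theorem itself. Both the correct combinatorial basis and the inductive breakdown on $\ell$ demand substantial commutative algebra and combinatorics, and no shorter route to Cohen-Macaulayness of $X_n$ is currently known. Once freeness of $R(n,\ell)$ is in place, the remaining pieces --- equidimensionality, the transfer of Cohen-Macaulayness through the $S_n$-action, and the concluding appeal to miracle flatness --- are comparatively formal.
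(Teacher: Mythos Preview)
The paper does not contain its own proof of this statement; the theorem is simply quoted from \cite{MR1839919} (Haiman's paper on the isospectral Hilbert scheme and the $n!$ conjecture) and used as a black box. Your outline is an accurate high-level summary of Haiman's original argument in that reference: reduce flatness of $\rho$ to Cohen--Macaulayness of $X_n$ via miracle flatness over the smooth base $\Hilb^n(\mathbb{C}^2)$, and obtain Cohen--Macaulayness from the polygraph theorem asserting that the coordinate ring $R(n,\ell)$ of the polygraph $Z(n,\ell)$ is a free module over $\mathbb{C}[\mathbf{x},\mathbf{y}]$.

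One clarification on your sketch: the passage from freeness of $R(n,n)$ to Cohen--Macaulayness of $X_n$ is slightly more indirect than ``$\mathcal{O}(X_n)$ is locally a direct summand of $R(n,n)$.'' In Haiman's argument one first shows that the polygraph theorem implies that the ideal $J\subseteq \mathbb{C}[\mathbf{x},\mathbf{y}]$ generated by the space $A$ of alternating polynomials satisfies $J^d$ free over $\mathbb{C}[\mathbf{x},\mathbf{y}]^{S_n}$ for all $d$, and then identifies $X_n$ with the blowup $\operatorname{Proj}\bigoplus_d J^d$, from which Cohen--Macaulayness follows. But this is a refinement of the route you describe, not a different strategy, and the polygraph theorem is indeed the entire substance.
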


The Hilbert scheme has generically $n$ distinct points on $\mathbb{C}^2$, where these points are unordered in 
$\Hilb^n(\mathbb{C}^2)/S_n$ and ordered in $X_n$.  
This leads us to relate $\mu^{-1}_B(0)^{rss}/\!\!/B$ or $\mu^{-1}_B(0)^{rss}/\!\!/_{\det}B$ with $X_n$.  

Let 
\begin{equation}\label{eq:Y-rss-description} 
Y^{rss} = \{(r,s,i)\in \mathfrak{b}\times \mathfrak{b}^*\times \mathbb{C}^n: 
[r,s]=0, r^{\alpha}s^{\beta}i \mbox{ spans }\mathbb{C}^n, \mbox{ and } r \mbox{ is regular}
\}.  
\end{equation}
Let $B$ act on $Y^{rss}$ via $b.(r,s,i)=(brb^{-1},bsb^{-1}, bi)$. 

\begin{proposition} 
We have an isomorphism $Y^{rss}/B \cong \mu_B^{-1}(0)^{rss}/\!\!/B$ of schemes.  
\end{proposition}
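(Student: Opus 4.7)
The plan is to construct an explicit morphism $\phi: Y^{rss} \to \mu_B^{-1}(0)^{rss}$ by setting the $(\mathbb{C}^n)^*$-coordinate to zero, and then leverage the explicit description of $\mu_B^{-1}(0)^{rss}/\!\!/B$ from Proposition 1.4 to show that the induced map on quotients is an isomorphism of schemes. The key point is that both quotients should be realized as $\mathbb{C}^{2n}\setminus \Delta_n$ via the diagonal entries of $r$ and $s$.

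First I would define $\phi : Y^{rss} \to \mu_B^{-1}(0)^{rss}$ by $(r,s,i) \mapsto (r,s,i,0)$. This is well-defined, since $[r,s]=0$ in $\mathfrak{g}$ already forces $[r,s]+i\cdot 0 = 0 \in \mathfrak{n}$, and the regularity of $r$ matches on both sides. The map is manifestly $B$-equivariant once we note that $B$ acts trivially on the zero covector. Hence $\phi$ descends to a morphism $\bar\phi: Y^{rss}/B \to \mu_B^{-1}(0)^{rss}/\!\!/B$. Injectivity of $\bar\phi$ is immediate: a $B$-element carrying $(r_1,s_1,i_1,0)$ to $(r_2,s_2,i_2,0)$ restricts to one carrying $(r_1,s_1,i_1)$ to $(r_2,s_2,i_2)$ in $Y^{rss}$.

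For surjectivity, I would give an explicit description of $Y^{rss}/B$. Since $r$ is regular semisimple, one uses the $B$-action to conjugate $r$ into the diagonal Cartan $\mathfrak{t}$. The condition $[r,s]=0$ then forces $s$ into the centralizer $\mathfrak{z}_{\mathfrak{g}}(r) = \mathfrak{t}$, so $s$ is also diagonal. For such diagonal pairs, the operator $r^{\alpha}s^{\beta}$ is diagonal, and the cyclicity of $(r,s,i)$ is equivalent to $i_k \neq 0$ for all $k$. The residual stabilizer in $B$ of diagonal $r$ is $T$, which acts freely and transitively on the open set of such $i$. Hence $Y^{rss}/B$ is parametrized by the diagonal entries $(r_{11},\ldots,r_{nn},s_{11},\ldots,s_{nn})$ with $r_{\iota\iota}$ pairwise distinct, giving $Y^{rss}/B \cong \mathbb{C}^{2n}\setminus \Delta_n$. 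To check compatibility with Proposition 1.4, I would evaluate $s_{\iota\iota}'$ on the image: when $r$ is diagonal, $l_k(r) = r - r_{kk}\mathrm{I}$ is diagonal, and $\prod_{k\neq \iota} l_k(r)$ has only its $(\iota,\iota)$-entry nonzero, equal to $\prod_{k\neq \iota}(r_{\iota\iota}-r_{kk})$. Substituting into the formula for $s_{\iota\iota}'$ gives $s_{\iota\iota}' = s_{\iota\iota}$, so $\bar\phi$ identifies the coordinates on the two copies of $\mathbb{C}^{2n}\setminus\Delta_n$.

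The main obstacle will be verifying the proof works at the scheme level, not just on closed points. In particular, one must confirm that the cyclicity condition cutting out $Y^{rss}$ precisely matches the stability/regularity condition baked into the notation $\mu_B^{-1}(0)^{rss}$ on the other side, and that both quotients $/$ and $/\!\!/$ yield the same scheme structure on this open locus. A secondary subtlety is the interpretation of $s\in \mathfrak{b}^*$ entering the bracket $[r,s]$, which must be done consistently via the identification $\mathfrak{b}^* \cong \mathfrak{g}/\mathfrak{n}$ used in defining $\mu_B$; once $r$ has been diagonalized and the centralizer argument constrains $s$ to $\mathfrak{t}$, this ambiguity evaporates, but one must check it does not obstruct the earlier verification that $\phi$ is a morphism of schemes.
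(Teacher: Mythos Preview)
Your proposal is correct and follows essentially the same line as the paper: both define the forward map $(r,s,i)\mapsto(r,s,i,0)$, and both rely on the observation that regular semisimple $r$ can be $B$-conjugated into $\mathfrak{t}$, whereupon $[r,s]=0$ forces $s\in\mathfrak{t}$ as well. The only real difference is in how the isomorphism is completed: the paper simply writes down the inverse map $(r,s,i,j)\mapsto(r,s,i)$ on the quotients, whereas you instead identify each side separately with $\mathbb{C}^{2n}\setminus\Delta_n$ via Proposition~\ref{prop:Im-rss-locus} and check that $\bar\phi$ respects the diagonal coordinates (your computation $s_{\iota\iota}'=s_{\iota\iota}$). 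Your route is slightly longer but has the virtue of making the scheme-theoretic statement transparent, since both identifications with $\mathbb{C}^{2n}\setminus\Delta_n$ are already scheme isomorphisms; the paper's terse two-map argument leaves exactly the scheme-level and well-definedness checks you flag as ``obstacles'' unaddressed.
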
 

\begin{proof}
Since $r\in \mathfrak{b}$ has pairwise distinct eigenvalues, $r$ is diagonalizable under the action by $B$. For a diagonal matrix $r$, $[r,s]=0$ implies $s$ is also a diagonal matrix. 
Thus, we have a map $Y^{rss}/B \rightarrow \mu_B^{-1}(0)^{rss}/\!\!/B$ given by $(r,s,i)\mapsto (r,s,i,0)$ and 
$\mu_B^{-1}(0)^{rss}/\!\!/B \rightarrow Y^{rss}/B$ given by $(r,s,i,j)\mapsto (r,s,i)$. 
\end{proof}

Let 
\begin{equation}\label{eq:Y-scheme-description} 
Y = \{(r,s,i)\in \mathfrak{b}\times \mathfrak{b}^*\times \mathbb{C}^n: 
[r,s]=0 \mbox{ and }  r^{\alpha}s^{\beta}i \mbox{ spans }\mathbb{C}^n 
\}  
\end{equation}  
and let  
\begin{equation} 
Z^{rss} = 
\{ (r,s,i)\in \mathfrak{gl}_n\times \mathfrak{gl}_n^* \times \mathbb{C}^n: [r,s]=0, r^{\alpha}s^{\beta}i \mbox{ spans }\mathbb{C}^n, \mbox{ and } r \mbox{ is regular}\}.
\end{equation} 

\begin{theorem}\label{thm:birationality-Y/B-IsoHilb} 
There is a birational map $Y/B \DashedArrow[->,densely dashed] X_n$. 
\end{theorem}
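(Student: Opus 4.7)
The plan is to exhibit dense open subsets of $Y/B$ and of $X_n$ between which there is an explicit isomorphism. The natural candidate on the $Y/B$ side is $Y^{rss}/B$ from \eqref{eq:Y-rss-description}, where the diagonal entries of $r$ and $s$, together with a normalized cyclic vector, will form complete coordinates.

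First I would analyze $Y^{rss}/B$ directly. For $(r,s,i)\in Y^{rss}$, regularity of $r\in\mathfrak{b}$ together with $[r,s]=0$ forces $s$ into the centralizer of $r$, so both $r$ and $s$ have well-defined diagonal entries, which are invariant under $B$-conjugation. Using $B$-conjugation one can bring $r$ to $\mathrm{diag}(\lambda_1,\ldots,\lambda_n)$ with pairwise distinct $\lambda_j$, after which $s=\mathrm{diag}(\mu_1,\ldots,\mu_n)$ automatically. The residual stabilizer is the maximal torus $T$, which acts freely on the cyclic vector $i$ (all entries nonzero) and can be used to normalize $i_k=1$. This produces an isomorphism
\[
Y^{rss}/B \;\xrightarrow{\ \sim\ }\; W := \{((\lambda_j,\mu_j))_{j=1}^n\in(\mathbb{C}^2)^n : \lambda_j \text{ pairwise distinct}\}.
\]

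Next I would construct the map $Y^{rss}/B\to X_n$ using the universal property of the reduced fiber product. The commuting pair $(r,s)$ with cyclic vector $i$ determines a cyclic $\mathbb{C}[x,y]$-module structure on $\mathbb{C}^n$, hence a point of $\Hilb^n(\mathbb{C}^2)\cong Z/GL_n$, with $B$-equivalent data giving the same ideal. Separately, the diagonal entries give a point of $(\mathbb{C}^2)^n$. These two maps agree over $S^n(\mathbb{C}^2)$ since the joint spectrum of a simultaneously triangular commuting pair coincides with the multiset of diagonal entries. The fiber product property then yields a morphism $Y^{rss}/B\to X_n$. On the distinct-points locus $X_n^{\circ}\subset X_n$, the flat degree-$n!$ map $X_n\to\Hilb^n(\mathbb{C}^2)$ specializes to the \'etale $S_n$-cover $(\mathbb{C}^2)^n\setminus\Delta\to S^n(\mathbb{C}^2)\setminus\Delta$, identifying $X_n^\circ$ with $(\mathbb{C}^2)^n\setminus\Delta$ via projection to $(\mathbb{C}^2)^n$. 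The image of $Y^{rss}/B$ under this identification is precisely $W$, giving an open immersion.

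A dimension count confirms that $W\subset X_n$ is dense: $B$ acts freely on $Y^{rss}$, so $\dim Y/B=\dim Y^{rss}/B = 2n = \dim X_n$. Composing the explicit isomorphism of the first step with the open immersion of the second produces the desired birational map $Y/B \DashedArrow[->,densely dashed] X_n$. I expect the main obstacle to be the careful verification that the two maps to $\Hilb^n(\mathbb{C}^2)$ and to $(\mathbb{C}^2)^n$ agree over $S^n(\mathbb{C}^2)$, together with the identification of $X_n^\circ$ with $(\mathbb{C}^2)^n\setminus\Delta$; these require invoking the structure of the isospectral Hilbert scheme on the smooth distinct-points stratum, as in \cite{MR1839919}.
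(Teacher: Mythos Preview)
Your proposal is correct and follows essentially the same route as the paper: construct the forward map $Y^{rss}/B\to X_n$ via the fiber-product description of $X_n$ (sending $(r,s,i)B$ to $((r,s,i)G,(r_{11},s_{11},\ldots,r_{nn},s_{nn}))$), and then argue that on the regular semisimple locus this is invertible because $r$ diagonalizes and $[r,s]=0$ forces $s$ diagonal.

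The only notable difference is in how invertibility is established. The paper writes down an inverse directly, as the map $Z^{rss}/G\times_{S^n\mathbb{C}^2}(\mathbb{C}^2)^n\to Y/B$ that uses the ordering data in $(\mathbb{C}^2)^n$ to select a $B$-orbit inside the $G$-orbit. You instead first parametrize $Y^{rss}/B$ explicitly by $W=\{((\lambda_j,\mu_j)):\lambda_j\text{ distinct}\}$, then identify the distinct-points locus $X_n^\circ$ with $(\mathbb{C}^2)^n\setminus\Delta$ and observe that the forward map becomes the inclusion $W\hookrightarrow(\mathbb{C}^2)^n\setminus\Delta$. Your version makes the open-immersion property transparent and justifies more carefully why the two components of the fiber-product map agree over $S^n(\mathbb{C}^2)$; the paper's version is terser but amounts to the same computation once one unpacks what ``projection onto the first triple'' means on the rss locus.
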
   

\begin{proof}
The morphism $Y^{rss}/B\rightarrow X_n$ factors through $Z/G\times_{S^n\mathbb{C}^2}(\mathbb{C}^2)^n$ by sending $(r,s,i)B \mapsto (r,s,i)G \times (r_{11},s_{11},\ldots, r_{nn}, s_{nn})$. 
This map is well-defined since $r\in \mathfrak{b}\subseteq \mathfrak{gl}_n$, $s\in \mathfrak{b}^*$, which could be viewed as a lower triangular matrix, and the $B$-orbits are closed.  
Now, since $r$ is regular, $r$ is diagonalizable. The Lie bracket condition $[r,s]=0$ implies $s$ is diagonal if $r$ is also diagonal. 
So the morphism $Z^{rss}/G \times_{S^2\mathbb{C}^2} (\mathbb{C}^2)^n\rightarrow Y/B$ via 
$(r,s,i)G \times (r_{11},s_{11},\ldots, r_{nn}, s_{nn}) \mapsto (r,s,i)B$ is the projection onto the first triple, and we are done. 
\end{proof} 

\begin{corollary} 
The birationality of $Y/B$ and the isospectral Hilbert scheme implies the existence of an isomorphism between the vector spaces  $H^0(Y/B, K_{Y/B}^d)$ and 
$H^0(X_n, K_{X_n}^d)$, where $K_{Y/B} = \Omega_{Y/B}^{2n}$, the canonical bundle of $2n$ forms. 
\end{corollary}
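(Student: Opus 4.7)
The plan is to invoke the classical birational invariance of plurigenera. By Theorem~\ref{thm:birationality-Y/B-IsoHilb}, there exist dense open subschemes $U \subseteq Y/B$ and $V \subseteq X_n$ together with an isomorphism $\varphi: U \xrightarrow{\sim} V$, and both ambient varieties have dimension $2n$. The pullback $\varphi^{*}$ induces an isomorphism
\[
\varphi^{*}: H^{0}(V,K_{V}^{d}) \xrightarrow{\ \sim\ } H^{0}(U,K_{U}^{d}),
\]
so it suffices to show that restriction to $U$ (resp.\ $V$) induces an isomorphism $H^{0}(Y/B, K_{Y/B}^{d}) \cong H^{0}(U, K_{U}^{d})$ and likewise on the $X_n$ side.

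First I would handle the $X_n$ side, which is easier because $X_n$ is projective (it is finite over the projective scheme $\Hilb^{n}(\mathbb{C}^{2})$). By Hironaka's resolution, pick a smooth projective $\widetilde{X}_n \to X_n$. The standard argument for birational invariance then says: for any two smooth proper birationally equivalent varieties $W_1, W_2$, a common smooth resolution $Z$ with proper birational morphisms $p_i: Z \to W_i$ satisfies $(p_i)_{*}\omega_{Z}^{\otimes d} \cong \omega_{W_i}^{\otimes d}$, so pullback gives $H^{0}(W_i, K_{W_i}^{d}) \cong H^{0}(Z, K_{Z}^{d})$. Applied to $\widetilde{X}_n$ and the closure of the graph of $\varphi$, this yields the isomorphism with $H^{0}(Z, K_{Z}^{d})$ after resolving singularities of $X_n$ if needed (or more simply, one uses Hartogs extension of pluricanonical forms across the codimension-$\geq 2$ locus where $\varphi^{-1}$ is not defined, on the smooth locus of $X_n$).

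The $Y/B$ side is the delicate one, since $Y$ is affine and $B$ is nonreductive, so the quotient $Y/B$ is a priori neither proper nor obviously smooth in all of $Y$. My strategy is: (i) restrict to the smooth locus $(Y/B)^{\mathrm{sm}}$, noting that pluricanonical sections of a normal variety extend across any closed subset of codimension $\geq 2$, and (ii) use Theorem~\ref{thm:birationality-Y/B-IsoHilb}'s explicit description of the birational identification, namely $(r,s,i)B \leftrightarrow (r,s,i)G \times (r_{11},s_{11},\ldots,r_{nn},s_{nn})$, together with Proposition~\ref{prop:Im-rss-locus} (giving $\mu_B^{-1}(0)^{rss}/\!\!/B \cong \mathbb{C}^{2n} \setminus \Delta_n$, an open subset of affine space), to verify that the indeterminacy locus has codimension $\geq 2$. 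One then chains the isomorphisms:
\[
H^{0}(Y/B, K_{Y/B}^{d}) \xrightarrow{\ \sim\ } H^{0}(U, K_{U}^{d}) \xrightarrow[\varphi^{*}]{\ \sim\ } H^{0}(V, K_{V}^{d}) \xleftarrow{\ \sim\ } H^{0}(X_n, K_{X_n}^{d}).
\]

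The main obstacle will be the codimension-$2$ extension step on the $Y/B$ side: one must verify that $Y/B$ is normal (or at least $S_2$) along the boundary between $U$ and its complement, and that the codimension of the indeterminacy locus of $\varphi$ is indeed at least two. If $Y/B$ fails to be normal, one would first replace it with its normalization (harmless for $H^{0}$ of line bundles on a variety whose singular locus has codimension $\geq 2$) or, as a cleaner alternative, pass to a smooth projective compactification of both sides and apply the classical birational invariance theorem verbatim to the compactifications, then descend.
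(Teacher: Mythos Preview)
The paper offers no proof of this corollary at all; it is simply asserted as a consequence of Theorem~\ref{thm:birationality-Y/B-IsoHilb}, presumably by appeal to the general principle of birational invariance of plurigenera. Your proposal is therefore an attempt to supply an argument the paper omits, and in doing so you have surfaced real difficulties rather than resolved them.

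There are two concrete gaps. First, your claim that $X_n$ is projective because $\Hilb^n(\mathbb{C}^2)$ is projective is false: $\mathbb{C}^2$ is not proper, so $\Hilb^n(\mathbb{C}^2)$ is only quasi-projective, and $X_n$ (being finite over it) is likewise non-proper. The entire ``easier'' side of your argument, which relies on properness to invoke the classical birational invariance theorem, therefore collapses. Second, the codimension-$\geq 2$ extension step on both sides is not available: the birational map in Theorem~\ref{thm:birationality-Y/B-IsoHilb} is constructed only on the regular semisimple locus, and the complement of that locus (where the eigenvalues of $r$ collide) is cut out by the vanishing of a discriminant, hence has codimension~$1$, not $\geq 2$. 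Hartogs-type extension of sections of a line bundle across a codimension-$1$ subset fails in general. Your fallback of compactifying both sides and descending does not help either: sections of $K^d$ on an open subvariety have no reason to extend to a smooth compactification, so equality of plurigenera of the compactifications says nothing about the original open varieties.

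In short, birational invariance of plurigenera is a theorem about smooth proper varieties, and neither $Y/B$ nor $X_n$ is proper; the corollary as stated in the paper does not follow from the birationality alone without substantial further input, and your proposal does not supply that input.
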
   

\begin{conjecture}
There is an embedding of the scheme 
$\mu_B^{-1}(0)/\!\!/_{\det}B$ into $X_n$. 
\end{conjecture}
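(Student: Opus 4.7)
The plan is to construct the desired embedding in three stages: (i) pin down the $\det$-semistable locus of $\mu_B^{-1}(0)$ via King's numerical criterion; (ii) produce a morphism from that semistable locus (modulo $B$) to the $B$-quotient $Y/B$ appearing in Theorem~\ref{thm:birationality-Y/B-IsoHilb} by forgetting the dual framing $j$ and lifting $s\in\mathfrak{b}^*$ to a matrix actually commuting with $r$; and (iii) compose with the birational map $Y/B \dashrightarrow X_n$ and verify that the resulting morphism is a locally closed immersion.

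For step (i), King's criterion applied to the $B$-action on $T^*(\mathfrak{b}\times\mathbb{C}^n)$ with character $\det$ should yield that $(r,s,i,j)$ is semistable precisely when no proper subspace $V\subsetneq \mathbb{C}^n$ compatible with the standard flag is simultaneously invariant under $r$ and $s$ and contains $i$; this matches the cyclicity condition built into the definition of $Y$ in~\eqref{eq:Y-scheme-description}. For step (ii), on the rss locus, diagonalization of $r$ pins the image of $s\in\mathfrak{b}^*\cong\mathfrak{g}/\mathfrak{n}$ to a unique diagonal representative $\tilde s$ satisfying $[r,\tilde s]=0$; this is essentially Proposition~\ref{prop:Im-rss-locus}, and the explicit formulas for $s_{\iota\iota}'$ recorded just before that proposition give an algebraic expression for $\tilde s$ as a rational function in $(r,s)$. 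I would then extend this expression to the full $\det$-semistable locus by clearing denominators and showing that the relevant traces $\tr\!\left(\prod_{k\neq \iota} l_k(r)\right)$ do not vanish identically on any semistable orbit, using the cyclicity from step (i).

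For step (iii), composing with Theorem~\ref{thm:birationality-Y/B-IsoHilb} sends $(r,\tilde s,i)B$ to $(r,\tilde s,i)G \times_{S^n\mathbb{C}^2} (r_{11},\tilde s_{11},\ldots,r_{nn},\tilde s_{nn})$. To promote this from birational to an honest morphism on the image, I would exploit normality of $X_n$ (in fact Cohen--Macaulayness by Haiman) together with density of the rss locus inside $\mu_B^{-1}(0)^{ss}/\!\!/B$. Injectivity on points then reduces to the statement that the ordered eigenvalue data $(r_{ii},\tilde s_{ii})$ together with the $GL_n$-orbit of the commuting triple $(r,\tilde s,i)$ uniquely determines the $B$-orbit of the upper-triangular triple, which follows from uniqueness of the flag-adapted form up to the diagonal torus (which is absorbed by $B$). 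Injectivity of the differential can be checked on the rss locus, where the map is a local isomorphism, and then propagated by semicontinuity of fiber dimension.

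The hard part will be step (ii) off the rss locus. When $r$ acquires repeated eigenvalues, the centralizer of $r$ in $\mathfrak{g}$ is no longer a maximal torus, the canonical lift $\tilde s$ is not obviously unique, and $\mu_B^{-1}(0)$ is almost certainly non-reduced along those degenerate strata; constructing $\tilde s$ as a genuine morphism of schemes (rather than a rational map with indeterminacy along the degenerate locus) requires either a delicate resolution argument or a reduction to normalizations followed by descent via projectivity of the GIT quotient over its affine base. A cleaner alternative worth pursuing is to bypass $Y/B$ entirely and use a Nevins-style identification of $\mu_B^{-1}(0)/\!\!/_{\det}B$ with a Hamiltonian reduction of $T^*(\widetilde{\mathfrak{g}}\times \mathbb{C}^n)$ by $G$; the latter admits a direct map to $X_n$ through Haiman's description of the isospectral Hilbert scheme as the blow-up of $\mathrm{Sym}^n(\mathbb{C}^2)$ along the big diagonal, which would sidestep the indeterminacy at the source.
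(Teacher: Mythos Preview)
The statement you are attempting to prove is stated in the paper as a \emph{conjecture}, not a theorem; the paper offers no proof and in fact explicitly remarks immediately afterward that extending Proposition~\ref{prop:Im-rss-locus} beyond the regular semisimple locus ``would be interesting'' and is left ``as an open problem.'' There is therefore no proof in the paper against which to compare your proposal.

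That said, a few remarks on the proposal itself. Your step~(i) invokes King's numerical criterion, but that criterion is formulated for reductive groups, and $B$ is not reductive; GIT for non-reductive groups (in the sense of Doran--Kirwan or B\'erczi--Doran--Hawes--Kirwan) requires substantially more care, and in particular the identification of the $\det$-semistable locus with a cyclicity condition is not automatic. Your step~(ii) is, as you yourself flag, the real obstruction: the rational lift $\tilde{s}$ of $s\in\mathfrak{b}^*$ to a genuine commuting matrix is only defined where the eigenvalues of $r$ are distinct, and the denominators $\tr\bigl(\prod_{k\neq\iota} l_k(r)\bigr)$ vanish precisely on the complement of the rss locus, so ``clearing denominators'' does not obviously extend the map across that locus as a morphism of schemes. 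Your alternative suggestion of passing through a $G$-Hamiltonian reduction of $T^*(\widetilde{\mathfrak{g}}\times\mathbb{C}^n)$ is closer in spirit to what one would expect to work, but establishing that the induced map to $X_n$ is a closed (or even locally closed) immersion, rather than merely a morphism, remains the essential content of the conjecture and is not addressed by anything in the paper.
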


It would be interesting to extend Proposition~\ref{prop:Im-rss-locus} to the entire Hamiltonian reduction and give a full description of $\mu_B^{-1}(0)/B \simeq T^*(\mathfrak{b}\times \mathbb{C}^n/B)$ in terms of well-known schemes. 
For the moment, we leave this as an open problem.

\subsection{Quiver-graded Steinberg varieties}\label{section:quiver-Steinberg-var}
We recall the Grothendieck-Springer resolution: $\widetilde{\mathfrak{g}}:= G\times_B \mathfrak{b} \stackrel{p}{\rightarrow} \mathfrak{g}$, where $(g,x)B\mapsto gxg^{-1}$  and consider the fiber product 
\[
\xymatrix@-1pc{ 
\ar[dd]_{\pi_1} \widetilde{\mathfrak{g}} \times_{\mathfrak{g}} \widetilde{\mathfrak{g}} \ar[rr]^{\pi_2}  & & \widetilde{\mathfrak{g}} \ar[dd]^{p}\\ 
& & \\  
\widetilde{\mathfrak{g}} \ar[rr]_{p} & & \mathfrak{g}. \\  
}  
\]  
The fiber product $\widetilde{\mathfrak{g}} \times_{\mathfrak{g}} \widetilde{\mathfrak{g}}$ is known as the Steinberg variety, also given as the triple 
\[ 
\{ (x,B,B')\in \mathfrak{g}\times G/B \times G/B: x \in \lie(B) \cap \lie(B') \} 
\] 
(cf. \cite{MR2838836}, \cite{MR2078342}, and \cite{MR1472321}). 
Steinberg varieties are important as they provide an alternative approach to the Springer correspondence, play a central role in the proof of Deligne-Langlands conjecture for Hecke algebras (proved by Kazhdan-Lusztig), and appear in a proof of the conjectured formula 
$\dim Z_G(u)=r+2 \dim (G/B)_u$ by Grothendieck, where $u\in U\subseteq G$, a connected reductive algebraic group over an algebraically closed field, $r=\rk( G)$, and $(G/B)_u=\{ B\subseteq G: u\in B\}$ (cf. \cite{MR2510045}). 
This construction easily extends to 
the fiber product  
\begin{equation}\label{eq:quiver-graded-Steinberg-var}
\underbrace{\widetilde{\mathfrak{g}}\times_{\mathfrak{g}} \cdots \times_{\mathfrak{g}} \widetilde{\mathfrak{g}}}_{k}/G \cong \mathfrak{b}^k/B.
\end{equation} 
Equation~\eqref{eq:quiver-graded-Steinberg-var} implies the isomorphism between $G$-orbits on $\widetilde{\mathfrak{g}}\times_{\mathfrak{g}} \cdots \times_{\mathfrak{g}} \widetilde{\mathfrak{g}}$ and $B$-orbits on $\mathfrak{b}^k$. Thus the $G$-equivariant $K$-theory on quiver-graded Steinberg varieties is equivalent to $B$-equivariant $K$-theory on the product of Borel subalgebras. 

Quiver-graded Steinberg varieties may be powerful tools to prove analogous formula or a similar Deligne-Langlands conjecture for other Hecke algebras.

\section{Quantum Hamiltonian reduction for filtered quiver representations}\label{section:quantum-Hamiltonian-reduction}  
In this section, we give a short introduction to quantum Hamiltonian reduction for nonreductive groups. 
Let $Q^{\dagger} = (Q_0^{\dagger}, Q_1^{\dagger})$ be a framed quiver, where $Q_0^{\dagger} = Q_0 \coprod \{ \circ \}$ and 
$Q_1^{\dagger} = Q_1 \coprod \{ \circ \stackrel{a}{\rightarrow} \stackrel{\gamma}{\bullet} \}$;  
the framed quiver has an extra vertex, called a framed vertex, 
and one extra arrow than the quiver $Q$, which points from the framed vertex to some vertex $\gamma$ in $Q$. 
Let $\beta^{\dagger}\in \mathbb{Z}_{\geq 0}^{Q_0^{\dagger}}$ be the dimension vector for the framed quiver. 
Consider the cotangent bundle $T^*(F^{\bullet}Rep(Q^{\dagger},\beta^{\dagger}))$ of the filtered representation space, 
which is a Poisson manifold with a Poisson action by the Lie group  
$\mathbb{P}_{\beta^{}} :=\prod_{i \in Q_0^{}}P_i$. The action is given by the map  
$T^*(F^{\bullet}Rep(Q^{\dagger},\beta^{\dagger})) \rightarrow \prod_{i\in Q_0^{}} \mathfrak{p}_i^*$, where 
$\mathfrak{p}_i =\lie(P_i)$ is the Lie algebra of $P_i$ and $\mathfrak{p}_i^*$ is the dual of $\mathfrak{p}_i$. 
We then have a homomorphism of Lie algebras 
$\phi_0:\prod_{i\in Q_0} \mathfrak{p}_i \rightarrow \Vect_{\Pi}(T^*(F^{\bullet}Rep(Q^{\dagger},\beta^{\dagger})))$, where $\Vect_{\Pi}(T^*(F^{\bullet}Rep(Q^{\dagger},\beta^{\dagger})))$ is the Lie algebra of vector fields on the cotangent bundle and $\Pi\in \Gamma(T^*F^{\bullet}Rep(Q^{\dagger},\beta^{\dagger}), \wedge^2 T (T^*(F^{\bullet}Rep(Q^{\dagger},\beta^{\dagger}))))$ is a Poisson bivector whose Lie bracket with itself is zero: $[\Pi,\Pi]=0$.  

A $\mathbb{P}_{\beta^{}}$-equivariant regular map $\mu_0: T^*(F^{\bullet}Rep(Q^{\dagger},\beta^{\dagger})) \rightarrow \prod_i \mathfrak{p}_i^*$ is a {\em moment map} for the $\mathbb{P}_{\beta^{}}$-action on the cotangent bundle 
if the dual 
$\mu_0^*:\prod_i \mathfrak{p}_i \rightarrow \mathbb{C}[T^*(F^{\bullet}Rep(Q^{\dagger},\beta^{\dagger}))]$ of the moment map satisfies the following: 
given a homomorphism of Lie algebras 
$v:\mathbb{C} [T^*(F^{\bullet}Rep(Q^{\dagger},\beta^{\dagger}))] \rightarrow \Vect_{\Pi}(T^*(F^{\bullet}Rep(Q^{\dagger},\beta^{\dagger})))$ preserving the Poisson structure, 
$v(\mu_0^*(\mathbf{a}))=\phi_0(\mathbf{a})$. 
The map $\mu_0$ is Poisson since the dual $\mu_0^*$ of the moment map is a homomorphism of Lie algebras.

Let $\lie(\mathbb{P}_{\beta^{}})$ be the Lie algebra of $\mathbb{P}_{\beta^{}}$ and let $A$ be an associative algebra equipped with a $\lie(\mathbb{P}_{\beta^{}})$-action (so there is a Lie algebra map $\phi:\lie(\mathbb{P}_{\beta^{}}) \rightarrow \Der(A)$). 
We have a {\em quantum moment map} for the pair $(A,\phi)$, 
which is an associative algebra homomorphism $\mu:U(\prod_{i\in Q_0^{}} \mathfrak{p}_i)\rightarrow A$ such that for any $\mathbf{a}\in \prod_{i\in Q_0^{}} \mathfrak{p}_i$ and $b\in A$, 
we have $[\mu(\mathbf{a}),b]=\phi(\mathbf{a})b$. 

Suppose now $A$ is a filtered associative algebra such that $\gr(A)$ is a Poisson algebra $A_0$ equipped with a $\lie(\mathbb{P}_{\beta^{}})$-action $\phi_0$ and a classical moment map $\mu_0$, and suppose that $\gr(\phi)=\phi_0$. 
A {\em quantization of $\mu_0$} is a quantum moment map $\mu:U(\lie(\mathbb{P}_{\beta^{}}))\rightarrow A$ such that $\gr(\mu)=\mu_0$. 
Define the space $A^{\lie(\mathbb{P}_{\beta^{}})}$ of 
$\lie(\mathbb{P}_{\beta^{}})$-invariants as 
$A^{\lie(\mathbb{P}_{\beta^{}})} := 
\{ b\in A: [\mu(\mathbf{a}), b]=0 \mbox{ for all }\mathbf{a}\in \lie(\mathbb{P}_{\beta^{}})\}$, a subalgebra of $A$. 
Let $J$ be a left ideal of $A$ generated by $\mu(\mathbf{a})$, where $\mathbf{a}\in \lie(\mathbb{P}_{\beta^{}})$.  
We define a two-sided ideal as 
$J^{\lie(\mathbb{P}_{\beta^{}})}:= J\cap A^{\lie(\mathbb{P}_{\beta^{}})} \subseteq A^{\lie(\mathbb{P}_{\beta^{}})}$.  
The associative algebra  
$$A/\!\!/\lie(\mathbb{P}_{\beta^{}}) := A^{\lie(\mathbb{P}_{\beta^{}})}/J^{\lie(\mathbb{P}_{\beta^{}})},$$ 
the {\em quantum Hamiltonian reduction of $A$ with respect to the quantum moment map $\mu$}.

\subsection{Special case}
For the rest of this section, assume $Q$ is the $k$-Jordan quiver, $Q^{\dagger}$ is the framed $k$-Jordan quiver, and 
$\beta^{\dagger}=(n,1)$.  
We impose the complete standard filtration of vector spaces at the vertex of $Q$. 
Consider the diagonal $B$-adjoint action on $\mathfrak{b}^k$. Extend this action to the filtered representation of the framed quiver and consider the $B$-action on 
$\mathfrak{b}^k \times \mathbb{C}^n$. This action is induced to the cotangent bundle 
$T^*(\mathfrak{b}^k \times \mathbb{C}^n)$.

\begin{lemma}
Since $T^*(\mathfrak{b}^k \times \mathbb{C}^n)$ is connected and symplectic,  $\ker (\mathbb{C}[T^*(\mathfrak{b}^k \times \mathbb{C}^n)]\stackrel{v}{\rightarrow} \Vect_{\Pi}(T^*(\mathfrak{b}^k \times \mathbb{C}^n)))=\mathbb{C}$. 
\end{lemma}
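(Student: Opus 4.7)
The plan is to identify the kernel of $v$ with the space of Casimir functions on $T^*(\mathfrak{b}^k\times\mathbb{C}^n)$ and then use nondegeneracy of the symplectic form together with connectedness to force this space to be $\mathbb{C}$.

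First I would unpack the definition of $v$. The map $v\colon \mathbb{C}[T^*(\mathfrak{b}^k\times\mathbb{C}^n)]\to \Vect_{\Pi}(T^*(\mathfrak{b}^k\times\mathbb{C}^n))$ sends a function $f$ to its Hamiltonian vector field $X_f$, characterized by $X_f(g)=\{f,g\}=\Pi(df,dg)$ for every $g$, or equivalently by $\iota_{X_f}\omega=df$, where $\omega$ is the canonical symplectic form on the cotangent bundle whose Poisson bivector is $\Pi$. Thus $f\in\ker v$ precisely when $X_f=0$, i.e.\ when $\{f,g\}=0$ for all $g$; such $f$ are the Casimirs of the Poisson algebra.

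Next I would use the symplectic (hence nondegenerate) nature of $\omega$. Because $T^*(\mathfrak{b}^k\times\mathbb{C}^n)$ is a cotangent bundle of a smooth affine variety, it is smooth and symplectic with $\omega$ nondegenerate at every point. The equation $\iota_{X_f}\omega=df$ combined with nondegeneracy of $\omega$ gives $X_f=0\iff df=0$. Therefore the kernel of $v$ consists exactly of those regular functions whose differential is identically zero on $T^*(\mathfrak{b}^k\times\mathbb{C}^n)$.

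Finally, I would invoke connectedness. A regular function on a smooth connected variety whose differential vanishes identically is locally constant, hence globally constant. Since $T^*(\mathfrak{b}^k\times\mathbb{C}^n)\cong \mathfrak{b}^k\times\mathfrak{b}^{*k}\times\mathbb{C}^n\times(\mathbb{C}^n)^{*}$ is an irreducible (in particular connected) affine space, this forces $f\in\mathbb{C}$. Conversely, every constant function is clearly a Casimir, giving $\ker v=\mathbb{C}$. No step here is truly an obstacle; the only thing requiring any care is the passage from $df=0$ to $f$ constant in the algebraic category, which follows simply because the variety is connected and the function is regular.
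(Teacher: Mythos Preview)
Your proof is correct and takes the same route as the paper: identify $v(f)=\{f,-\}$ as the Hamiltonian vector field of $f$ and argue that its vanishing forces $f$ to be constant. The paper's write-up is terser---it records that the Jacobi identity makes $v$ a Lie algebra homomorphism and then simply asserts that $\ker v$ consists of constants---whereas you make explicit the use of nondegeneracy ($X_f=0\iff df=0$) and connectedness, which is precisely what that final assertion rests on.
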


\begin{proof} 
Let $v:f\mapsto \{ f,-\}$, which assigns to $f$ the Hamiltonian vector field with Hamiltonian $f$. 
Since $[\Pi,\Pi]=0$,  
the Poisson bracket satisfies the Jacobi identity: $\{ \{ f,g\},h\} + \{ \{ g,h\},f \}+ \{ \{ h,f\}, g\}=0$ for polynomials $f,g,h$ on $T^*(\mathfrak{b}^k \times \mathbb{C}^n)$. Thus, the vector space of differentiable functions on the cotangent bundle $T^*(\mathfrak{b}^k \times \mathbb{C}^n)$  has the structure of a Lie algebra. Since $v$ is a Lie algebra homomorphism, $\ker v$ consists of constant functions. 
\end{proof} 

\begin{lemma} 
The first cohomology $H^1(T^*(\mathfrak{b}^k \times \mathbb{C}^n),\mathbb{C})=0$. It follows that 
$v: \mathbb{C} [T^*(\mathfrak{b}^k \times \mathbb{C}^n)] \rightarrow \Vect_{\Pi}(T^*(\mathfrak{b}^k \times \mathbb{C}^n))$ is surjective. 
\end{lemma}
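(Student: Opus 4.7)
The plan is to reduce the vanishing of $H^1$ to contractibility of the ambient space, and then derive the surjectivity from the standard exact sequence connecting functions, Hamiltonian vector fields, and the first de Rham cohomology on a symplectic manifold.

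For the first assertion, I would observe that $\mathfrak{b}^k\times\mathbb{C}^n$ is a finite-dimensional complex vector space, and its cotangent bundle is the trivial bundle, so $T^*(\mathfrak{b}^k\times\mathbb{C}^n)$ is itself a complex vector space. In particular it is contractible, so its reduced singular cohomology vanishes in all positive degrees, yielding $H^1(T^*(\mathfrak{b}^k\times\mathbb{C}^n),\mathbb{C})=0$.

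For the surjectivity, I would appeal to the standard exact sequence
$$0\to\mathbb{C}\to\mathcal{O}(M)\xrightarrow{v}\Vect_\Pi(M)\to H^1(M,\mathbb{C})\to 0$$
valid on any connected symplectic manifold $(M,\omega)$. Here $v$ sends a function $f$ to its Hamiltonian vector field $\{f,-\}$, and the subsequent map sends a Poisson vector field $X$ to the de Rham class of the closed $1$-form $\iota_X\omega$. The underlying point is that under the symplectic duality $TM\cong T^*M$ given by $X\mapsto\iota_X\omega$, Poisson vector fields correspond exactly to closed $1$-forms and Hamiltonian vector fields to exact $1$-forms, so the cokernel of $v$ is $H^1(M,\mathbb{C})$. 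The triviality of the kernel $\mathbb{C}\subseteq\mathcal{O}(M)$ is the content of the previous lemma, and with $H^1=0$ the surjectivity of $v$ is immediate from exactness at $\Vect_\Pi(M)$.

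The one technical point I would want to verify is that in the algebraic setting of the paper, where $\mathcal{O}=\mathbb{C}[T^*(\mathfrak{b}^k\times\mathbb{C}^n)]$ denotes polynomial functions, the relevant $H^1$ is algebraic de Rham, and that this agrees with the singular $H^1$ used above on our contractible affine space. This follows from Grothendieck's algebraic Poincar\'e lemma on affine space (the algebraic de Rham complex on $\mathbb{A}^N$ is exact in positive degree), so no genuine obstacle arises; this is the step where some care is needed, but it is routine.
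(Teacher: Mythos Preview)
Your proposal is correct and follows essentially the same approach as the paper: both note that $T^*(\mathfrak{b}^k\times\mathbb{C}^n)$ is a complex vector space so $H^1$ vanishes, and both deduce surjectivity of $v$ from a complex linking $\mathbb{C}$, functions, Poisson vector fields, and $H^1$. Your version is in fact more complete, since you explicitly justify the exact sequence via the symplectic correspondence between Poisson (resp.\ Hamiltonian) vector fields and closed (resp.\ exact) $1$-forms, and you address the algebraic versus topological de Rham issue that the paper leaves implicit.
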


\begin{proof}
The vector space $T^*(\mathfrak{b}^k \times \mathbb{C}^n)$ is isomorphic to $\mathbb{C}^{(k(n+1)+2)n}$ so it is clear that the first cohomology vanishes. 
Consider the complex:  
\begin{equation}
0 \longrightarrow 
 \mathbb{C} \stackrel{d^{-1}} {\longrightarrow}
\mathbb{C} [T^*(\mathfrak{b}^k \times \mathbb{C}^n)] \stackrel{d^0}{\longrightarrow}   
\Vect_{\Pi}(T^*(\mathfrak{b}^k \times \mathbb{C}^n)) \stackrel{d^1}{\longrightarrow}\ldots .  
\end{equation} 
Since $H^1(T^*(\mathfrak{b}^k \times \mathbb{C}^n),\mathbb{C})=\ker d^1/\im d^0 = 0$, for any $X\in \Vect_{\Pi}(T^*(\mathfrak{b}^k \times \mathbb{C}^n))$ whose $d^1(X)=0$, there is a polynomial $f$ on 
$T^*(\mathfrak{b}^k \times \mathbb{C}^n)$ such that $d^0(f) = X$. Thus, $v$ is surjective. 
\end{proof}

We end this section with a remark that the study of cohomological invariants arising from the (quantum) Hamiltonian reduction in the nonreductive setting may emit interesting geometric invariants.

\section{Towards deformation quantization}\label{section:deformation-quantization} 
Kontsevich in \cite{MR2062626} affirmed that Poisson manifolds admits a quantization. 
We give a short application of our filtered quiver representations to deformation quantization in the nonreductive setting to make it a tool to study interesting varieties arising from our construction. In particular, such tool may be a necessity when studying flat families to investigate the preimage of zero of a moment map for nonreductive groups.

Suppose $A$ is a deformation quantization of a Poisson algebra $A_0$ equipped with a $\lie(\mathbb{P}_{\beta^{}})$-action $\phi_0$ and a classical moment map $\mu_0$, and assume that $\phi=\phi_0 \mod \hbar$. A quantization of $\mu_0$ is a quantum moment map 
$\mu:U(\lie(\mathbb{P}_{\beta^{}}))\rightarrow A[\hbar^{-1}]$ satisfying $\mu(\mathbf{a})=\hbar^{-1}\mu_0(\mathbf{a})+\mathcal{O}(\hbar)$ for $\mathbf{a}\in \lie(\mathbb{P}_{\beta^{}})$. 

Let $A$ be a deformation quantization of the algebra $A_0=\mathbb{C}[T^*(F^{\bullet}Rep(Q^{\dagger},\beta^{\dagger}))]$ 
and let $\mu_0:\lie(\mathbb{P}_{\beta^{}})\rightarrow A_0$ be a classical moment map as before. 
Let $\mu$ be a quantum moment map quantizing $\mu_0$. Let $\mathcal{O}\subseteq \lie(\mathbb{P}_{\beta^{}})^*$ be a $\mathbb{P}_{\beta}$-invariant closed orbit, and $\Red(T^*(F^{\bullet}Rep(Q^{\dagger},\beta^{\dagger})), \mathbb{P}_{\beta^{}},\mathcal{O})$ be the corresponding classical reduction. 
Let $\lie(\mathbb{P}_{\beta^{}})_{\hbar}$ be the Lie algebra over $\mathbb{C}[[\hbar]]$ with Lie bracket 
$[\mathbf{a},\mathbf{b}]_{\hbar}=\hbar[\mathbf{a},\mathbf{b}]$, where 
$\mathbf{a},\mathbf{b}\in \lie(\mathbb{P}_{\beta^{}})$. Note that 
$\lie(\mathbb{P}_{\beta^{}})_{\hbar}\cong \lie(\mathbb{P}_{\beta^{}})[[\hbar]]$ as a vector space. 
Let $U(\lie(\mathbb{P}_{\beta^{}})_{\hbar})$ be the enveloping algebra of $\lie(\mathbb{P}_{\beta^{}})_{\hbar}$; it is a deformation quantization of the symmetric algebra $S(\lie(\mathbb{P}_{\beta^{}}))$, which is a completion of the Rees algebra of $U(\lie(\mathbb{P}_{\beta^{}}))$. 
We can now construct a quantum moment map $\mu_{\hbar}:U(\lie(\mathbb{P}_{\beta^{}})_{\hbar})\rightarrow A$ given by $\mu_{\hbar}(\mathbf{a})=\hbar \mu(\mathbf{a})$ for $\mathbf{a}\in \lie(\mathbb{P}_{\beta^{}})$.

Let $J_0\subseteq S(\lie(\mathbb{P}_{\beta^{}}))$  be an ideal of functions vanishing on the closed orbit $\mathcal{O}$ and let $J\subseteq U(\lie(\mathbb{P}_{\beta^{}})_{\hbar})$ be an ideal deforming $J_0$. 
In the case when $\mathcal{O}$ is a semisimple orbit or in the case of reductive setting, the ideal $J$ exists. 
We define the quantum reduction as 
$$\Red(A,\lie(\mathbb{P}_{\beta^{}}) , J):= A^{\lie(\mathbb{P}_{\beta^{}})}/(A\mu_{\hbar}(J))^{\lie(\mathbb{P}_{\beta^{}})}, 
$$
which is a quotient by an $\hbar$-adically closed ideal. 
The algebra $\Red(A, \lie(\mathbb{P}_{\beta^{}}), J)$ is a deformation of the function algebra on $\Red(T^*(F^{\bullet}Rep(Q^{\dagger},\beta^{\dagger})), \mathbb{P}_{\beta^{}},\mathcal{O})$, but this deformation does not need to be flat. If it is indeed flat, then we are able to conclude that reduction commutes with quantization; 
we leave it as an open problem to find the condition on which reduction and quantization commute for nonreductive group equivariant theory on filtered quiver subrepresentations.

\section{Rational Cherednik algebras and noncommutative deformations of the Hilbert scheme}\label{subsection:rational-Cherednik-algebras}

Gan-Ginzburg quantizes the Hamiltonian reduction of their moment map in their reductive algebra setting in \cite{MR2210660} to obtain the rational Cherednik algebra. In this section, we investigate the quantization of the Hamiltonian reduction in the filtered quiver representation setting. 

\subsection{Symplectic reflection algebras}
We begin with an introduction to symplectic reflection algebras. 

Let $V$ be an $n$-dimensional complex vector space. 
A pseudo reflection is an invertible linear transformation $g\in GL(V)$ of finite order such that the subspace of $V$ invariant under $g$ has dimension $n-1$. 
Pseudo reflections are also known as complex reflections, an invertible linear transformation of $V$ of finite order that fixes a complex hyperplane pointwise. 
Thus, a complex reflection group is a finite group generated by pseudo (complex) reflections. 

\begin{theorem}[\cite{MR0072877}, \cite{MR0234953}, \cite{MR0059914}]
The following are equivalent: 
\begin{enumerate}
\item $V/G$ is smooth, 
\item $\mathbb{C}[V]^{G}$ is a polynomial algebra on $\dim V$ generators, i.e., $\spec(\mathbb{C}[V]^G)\cong \mathbb{A}^{\dim V}$, 
\item $G=(G,V)$ is a complex reflection group. 
\end{enumerate}
\end{theorem}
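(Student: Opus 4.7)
The plan is to establish the equivalence through the cycle $(3) \Rightarrow (2) \Rightarrow (1) \Rightarrow (3)$. The middle implication is immediate, since $\spec\mathbb{C}[x_1,\ldots,x_n]\cong\mathbb{A}^n$ is smooth.

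For $(3) \Rightarrow (2)$, my approach is the classical Chevalley argument via the Reynolds operator $\rho = \frac{1}{|G|}\sum_{g\in G} g$ combined with the following key divisibility: for any pseudo-reflection $s\in G$ with reflecting hyperplane $H_s=\ker(\ell_s)$, the linear form $\ell_s$ divides $f - s(f)$ for every $f\in\mathbb{C}[V]$. Exploiting this inductively, I would construct $n$ algebraically independent homogeneous invariants $f_1,\ldots,f_n$ of degrees $d_i$ satisfying $\prod_{i=1}^n d_i = |G|$. A Hilbert-series comparison via Molien's formula then forces $\mathbb{C}[V]$ to be free of rank $|G|$ over $\mathbb{C}[f_1,\ldots,f_n]$, whence $\mathbb{C}[V]^G = \mathbb{C}[f_1,\ldots,f_n]$.

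For $(1) \Rightarrow (3)$, the deeper Shephard-Todd direction, I would let $G_0 \trianglelefteq G$ be the normal subgroup generated by all pseudo-reflections in $G$. By the direction already established, $V/G_0 \cong \mathbb{A}^n$ is smooth, and the residual group $G/G_0$ acts on it. By the very definition of $G_0$, any element of $G \setminus G_0$ has a fixed locus of codimension at least $2$, so the intermediate morphism $V/G_0 \to V/G$ is \'etale in codimension one. Zariski purity of the branch locus, applied to this finite map between the smooth varieties $V/G_0$ and $V/G$ (using the standing smoothness hypothesis on the target), upgrades this to \'etaleness everywhere. Since $\mathbb{A}^n$ is simply connected, any free finite group action on it must be trivial, so $G/G_0 = 1$ and $G$ is a pseudo-reflection group.

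The main obstacle will be $(1) \Rightarrow (3)$: the Chevalley direction reduces to a concrete averaging-and-degree-counting calculation, but recovering the pseudo-reflection structure from bare smoothness of the quotient requires both purity of the branch locus and the vanishing of $\pi_1(\mathbb{A}^n)$ in a suitable topology. The most delicate point is justifying that $V/G_0 \to V/G$ is \'etale in codimension one before knowing it is \'etale everywhere, which rests on the explicit characterization of pseudo-reflections as precisely those elements of $G$ whose fixed loci are hyperplanes.
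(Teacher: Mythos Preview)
The paper does not supply its own proof of this theorem: it is stated with citations to Chevalley, Shephard--Todd, and Serre and used as background in Section~\ref{subsection:rational-Cherednik-algebras}. So there is no ``paper's proof'' against which to compare your proposal.

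That said, your outline is the standard modern route to Chevalley--Shephard--Todd and is essentially correct. The cycle $(3)\Rightarrow(2)\Rightarrow(1)\Rightarrow(3)$ is the right organization; the Chevalley direction via the Reynolds operator and the divisibility $\ell_s \mid (f - s(f))$ is exactly the classical argument, and the Hilbert-series/Molien comparison to force freeness is the clean way to finish. For $(1)\Rightarrow(3)$, your strategy of passing to the normal subgroup $G_0$ generated by pseudo-reflections, invoking purity of the branch locus for the finite map $V/G_0 \to V/G$ between smooth varieties, and then using simple connectedness of $\mathbb{A}^n$ is correct and is essentially Serre's argument. One small point of care: the claim that $V/G_0 \to V/G$ is \'etale in codimension one needs the observation that the ramification locus of a quotient map $W \to W/(G/G_0)$ is the union of fixed loci of nontrivial elements, and that these all have codimension $\geq 2$ in $W = V/G_0$ because any element of $G$ fixing a hyperplane already lies in $G_0$; you state this, but in a full write-up you would want to check that fixed loci in $V$ of codimension $\geq 2$ descend to loci of codimension $\geq 2$ in $V/G_0$, which follows since $V \to V/G_0$ is finite.
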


Now let $l/p\in \mathbb{Z}$. 
We define $G(l,p,n)$ to be the group of $n\times n$ monomial matrices whose nonzero entries are $l$-th root of unity and so that the product of the nonzero entries is an $l/p$-th root of unity.

\begin{example} 
Consider $G(l,1,n)\cong (\mathbb{Z}/l\mathbb{Z})\wr S_n = (\mathbb{Z}/l\mathbb{Z})^n \rtimes S_n$. 
Then $G(1,1,n)\cong S_n$ and $G(2,1,n)\cong B_n$, the Weyl group of type $B$. 
So for $S_n$ acting on $\mathbb{C}[x_1,\ldots, x_n]$ by permuting the indices of the generators, 
we have $\mathbb{C}[x_1,\ldots, x_n]^{G(1,1,n)} \cong \mathbb{C}[\Sigma_1,\ldots,  \Sigma_n]$, where $\Sigma_i = \displaystyle{\sum_{1\leq j_1<\ldots < j_i\leq n}} x_{j_1}\cdots x_{j_i}$ (symmetric polynomials), and more generally, 
$\mathbb{C}[x_1,\ldots, x_n]^{G(l,1,n)}\cong \mathbb{C}[f_1,\ldots, f_n]$, where $f_i = \Sigma_i(x_1^l,\ldots, x_n^l)$, symmetric polynomials evaluated at $x_1^l,\ldots, x_n^l$. 
\end{example}

Since the orbit space $V/G$ does not need to be smooth in general, we give the construction of a symplectic reflection group. 
Let $(V,\omega)$ be a symplectic vector space, where $\omega:V\times V\rightarrow \mathbb{C}$ is a nondegenerate bilinear symplectic form. 
Define the symplectic linear group $\Sp(V)$ as the set of all $g\in GL(V)$ such that $\omega(gu,gv)=\omega(u,v)$ for all $u,v\in V$, i.e., $\omega$ is invariant under $G$. 
We say $s\in G$ is a symplectic reflection if $\rk(s-\Id)=2$.

\begin{definition}
The triple $(G,V,\omega)$ is a symplectic reflection group if 
\begin{enumerate}
\item $(V,\omega)$ is a symplectic vector space, 
\item $G\leq \Sp(V)$ consists of symplectic transformations of $V$, and 
\item $G$ is generated by the symplectic reflections on $V$. 
\end{enumerate}
\end{definition}

From this point forward, we will let $(G,V,\omega)$ be a symplectic reflection group. 
Let $G$ act on $\mathbb{C}[V]$ via ${}^{g}f(v):= f(g^{-1}v)$ for all $f\in \mathbb{C}[V]$, $v\in V$, and $g\in G$. 
The skew group ring $\mathbb{C}[V]\rtimes G$ is a noncommutative algebra, which is isomorphic to $\mathbb{C}[V]\otimes_{\mathbb{C}} \mathbb{C}G$ as a vector space, that satisfies $g\cdot f = {}^{g}f\cdot g$ for all $f\in \mathbb{C}[V]$ and $g\in G$.  
The center $Z(\mathbb{C}[V]\rtimes G)$ of the skew group ring is isomorphic to the ring $\mathbb{C}[V]^G$ of $G$-invariant functions on $V$.

\begin{definition}\label{defn:symplectic-reflection-algebra}
Let $S$ be the set of symplectic reflections of a symplectic reflection group $(G,V,\omega)$. 
Let $TV^*$ be the tensor algebra $\mathbb{C}\oplus V^*\oplus (V^*\otimes V^*)\oplus\ldots$, and let the symplectic form $\omega_{V^*} =  \omega$ under the identification of $V$ and $V^*$. Let 
$\omega_s=\omega$ on $\im(s-\Id)$ and $0$ on $\ker(s-\Id)$.  
A {\em symplectic reflection algebra} is  
\begin{equation}\label{eq:symplectic-reflection-algebra-defn}
H_{t,\mathbf{c}} := H_{t,\mathbf{c}}(G)= TV^* \rtimes G\big/\langle 
[u,v]-t\omega_{V^*}(u,v)1_G+2\sum_{s\in S} \mathbf{c}(s)\omega_s(u,v)\cdot s: u,v\in V^*
\rangle,  
\end{equation}
where 
 $t\in \mathbb{C}$ and $\mathbf{c}:S\rightarrow \mathbb{C}$ is a $G$-conjugacy invariant function, where 
$\mathbf{c}(s)=\mathbf{c}(gsg^{-1})$ for all $s\in S$ and $g\in G$. 
\end{definition}

It is a classical result that  $H_{t, \mathbf{c}}$ are Poincar\'e-Birkhoff-Witt (PBW) deformations of the skew group ring $\mathbb{C}[V]\rtimes G=\Sym(V^*)\rtimes G$ (\cite{MR1881922}, Theorem 1.3). 
That is, letting generators of $G$ have degree $0$ and the generators of $V^*$ have degree $1$, we have a natural filtration $F^{\bullet}$ of $H_{t,\mathbf{c}}$ such that the associated graded $\gr_{F^{\bullet}}(H_{t,\mathbf{c}})$ is isomorphic to $\mathbb{C}[V]\rtimes G$ as algebras. 
So symplectic reflection algebras are deformations of a skew group ring, and since 
$H_{\lambda t,\lambda \mathbf{c}} \cong H_{t,\mathbf{c}}$ where $\lambda \in \mathbb{C}$ is a nonzero scalar, we focus on the cases when $t=0$ or $1$. 
It is straightforward to check that $H_{0,\mathbf{0}}\cong \mathbb{C}[V]\rtimes G$.

Now let $e=|G|^{-1}\sum_{g\in G}g$, the trivial idempotent in the group ring 
$\mathbb{C}G$.  Then 
\[ 
\mathbb{C}[V]^G   \stackrel{\simeq}{\longrightarrow}  
e(\mathbb{C}[V]\rtimes G)e, \mbox{ where } f \mapsto efe . 
\] 
The spherical subalgebra of $H_{t,\mathbf{c}}$ is defined to be 
\[ 
U_{t,\mathbf{c}} := eH_{t,\mathbf{c}} e. 
\] 
By PBW, the graded spherical subalgebra $\gr_{F^{\bullet}}U_{t,\mathbf{c}}$ is isomorphic to the ring $\mathbb{C}[V]^G$ of invariants as algebras. 
The center of the spherical subalgebra is $\mathbb{C}$ for $t\not=0$, while $U_{t,\mathbf{c}}$ is commutative for $t=0$. 
In fact, Satake isomorphism gives the result that $Z(H_{t,\mathbf{c}})\cong Z(U_{t,\mathbf{c}})$. 
It follows that $Z(H_{t,\mathbf{c}})=\mathbb{C}$ for $t\not=0$ while  
$Z(H_{0,\mathbf{c}})=U_{0,\mathbf{c}}$. 

The variety $X_{\mathbf{c}}(G):= \spec U_{0,\mathbf{c}}=\spec (Z(H_{0,\mathbf{c}}))$ is known as a generalized Calogero-Moser space, which is smooth if and only if the dimension of any simple $H_{0,\mathbf{c}}$-module equals $|G|$.

\begin{definition}\label{defn:symplectic-resolution}  
Let $(V/G)_{\sm}$ be the smooth locus of $V/G$. 
A symplectic resolution of $V/G$ is a resolution of singularities $\pi:X\rightarrow V/G$ such that a symplectic form $\omega_X$ satisfying 
\[ 
\pi^*(\omega_{(V/G)_{\sm}}) = \omega_X|_{\pi^{-1}((V/G)_{\sm})}
 \] 
exists on $X$. 
\end{definition} 
A symplectic resolution induces a symplectic isomorphism $\pi|_{\pi^{-1}((V/G)_{\sm})}:\pi^{-1}((V/G)_{\sm})\stackrel{\simeq}{\rightarrow} (V/G)_{\sm}$. 
An interesting result relating orbit spaces and symplectic reflection algebras is the following: 

\begin{theorem}[\cite{MR2065506}, \cite{MR2426349}]
The orbit space $V/G$ admits a symplectic resolution if and only if $X_{\mathbf{c}}(G)$ is smooth for a generic $\mathbf{c}$. 
\end{theorem}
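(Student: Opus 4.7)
The plan is to invoke Namikawa's theory of Poisson deformations of symplectic singularities together with the identification (due to Ginzburg--Kaledin and Losev) of the Calogero--Moser family $\{X_{\mathbf{c}}(G)\}_{\mathbf{c} \in \mathfrak{c}}$ as a universal Poisson deformation of $V/G$. Since $G \subseteq \Sp(V)$ is finite and acts symplectically, $V/G$ is a symplectic singularity in the sense of Beauville, so Namikawa's theorem applies: its Poisson deformations are parametrized by a finite-dimensional affine space, and any crepant (in particular symplectic) resolution $\widetilde{X} \to V/G$ lifts to a Poisson deformation $\widetilde{\mathcal{X}} \to \mathfrak{c}$ compatible with the family $\mathcal{X} = \{X_{\mathbf{c}}(G)\} \to \mathfrak{c}$, with the contraction $\widetilde{\mathcal{X}}_{\mathbf{c}} \to X_{\mathbf{c}}(G)$ realized fiberwise.

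For the forward direction ($\Rightarrow$), I would assume that $\pi: \widetilde{X} \to V/G$ is a symplectic resolution. Then $\widetilde{\mathcal{X}}_{\mathbf{c}}$ is smooth for all $\mathbf{c}$ in a Zariski-open subset of $\mathfrak{c}$, by openness of the smooth locus and the fact that $\widetilde{\mathcal{X}}_0 = \widetilde{X}$ is smooth. A standard rigidity argument, using that $R^1(\pi_{\mathbf{c}})_{*}\mathcal{O}_{\widetilde{\mathcal{X}}_{\mathbf{c}}} = 0$ for generic $\mathbf{c}$ (since the base locus of the contraction collapses in a small Poisson deformation), shows that $\pi_{\mathbf{c}}: \widetilde{\mathcal{X}}_{\mathbf{c}} \to X_{\mathbf{c}}(G)$ becomes an isomorphism for generic $\mathbf{c}$, so $X_{\mathbf{c}}(G)$ is smooth.

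For the reverse direction ($\Leftarrow$), suppose $X_{\mathbf{c}}(G)$ is smooth for generic $\mathbf{c}$. Namikawa's existence theorem produces a $\mathbb{Q}$-factorial terminalization $\widetilde{X} \to V/G$, which is a birational Poisson morphism whose source has at worst terminal symplectic singularities. Lifting to the deformation family and using that generic fibers downstairs are smooth, one deduces smoothness of the generic fiber upstairs. Since terminal symplectic singularities are rigid under Poisson deformation (their singular locus is preserved in codimension by Namikawa's classification), $\widetilde{X}$ must itself already be smooth, making $\widetilde{X} \to V/G$ the desired symplectic resolution.

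The principal obstacle is the identification of the Calogero--Moser parameter space $\mathfrak{c} = \{\mathbf{c}: S \to \mathbb{C}\}^G$ with Namikawa's intrinsic deformation base $H^2(\widetilde{X}_{\mathrm{reg}}, \mathbb{C})$; both directions rely on this correspondence in order to transfer smoothness between the two sides. This identification is carried out in the cited references by computing the second Poisson cohomology $HP^2(\mathbb{C}[V]^G)$ directly, matching its dimension with the number of PBW deformation parameters of $\mathbb{C}[V] \rtimes G$ via the Satake isomorphism $Z(H_{0,\mathbf{c}}) \cong U_{0,\mathbf{c}}$, and then verifying that the classifying map from $\mathfrak{c}$ to the Poisson deformation functor is \'etale (in fact an isomorphism after a suitable finite cover).
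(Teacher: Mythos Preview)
The paper does not prove this theorem; it merely states it as a cited result from \cite{MR2065506} (Ginzburg--Kaledin) and \cite{MR2426349} (Namikawa), so there is no in-paper argument to compare against. Your sketch is broadly faithful to the strategy of those references: the identification of the Calogero--Moser family as the universal Poisson deformation of $V/G$ is the Ginzburg--Kaledin contribution, and the interplay with $\mathbb{Q}$-factorial terminalizations and rigidity of terminal symplectic singularities is Namikawa's. One historical correction: the direction ``generic $X_{\mathbf c}(G)$ smooth $\Rightarrow$ symplectic resolution exists'' was already obtained by Ginzburg--Kaledin via a more direct degeneration argument (taking a simultaneous resolution of the one-parameter family through a smooth fiber), rather than by first invoking the existence of a terminalization as you do; Namikawa's input is primarily needed for the converse. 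Also, Losev's work postdates the cited papers and is not needed for the statement as given, so attributing the deformation identification to him is anachronistic here.
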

Symplectic resolutions do not need to be unique, but one of many interesting properties about these resolutions is that they are semismall. Note that the Springer resolution is a symplectic resolution and it is semismall: 
\[ 
\dim \widetilde{\mathcal{N}}\times_{\mathcal{N}} \widetilde{\mathcal{N}} \leq \dim \widetilde{\mathcal{N}}. 
\]

\subsection{Rational Cherednik algebras}
Let $V=\mathfrak{h}\oplus \mathfrak{h}^*$ with basis $y_1,\ldots, y_n$ for $\mathfrak{h}$ and $x_1,\ldots, x_n$ for $\mathfrak{h}^*$. Then $V$ has a symplectic form $\omega :V\times V\rightarrow \mathbb{C}$. A standard symplectic form $\omega_V$ is defined to be $\omega_V(y \oplus x , y' \oplus x' ) = x' (y) -x(y')$, where $y, y'\in \mathfrak{h}$ and $x, x'\in \mathfrak{h}^*$. 
Let $\Gamma$ be the image of a complex reflection group $G$ in $GL(\mathfrak{h})\times GL(\mathfrak{h}^*)$, i.e., for $g\in G$, $g(y_i,x_j)= (gy_i,gx_j)$, which preserves the symplectic form. 
Then $s\in \Gamma$ is a symplectic reflection if and only if $s$ is a complex reflection in $G$.  
If $s$ is a complex reflection, then $\rk(s-\Id)=1$ but if $s$ is a symplectic reflection, then $\rk(s-\Id)=2.$

Rational Cherednik algebras are symplectic reflection algebras associated to the triple $(\mathfrak{h}\oplus \mathfrak{h}^*,\omega,W)$, where $W$ is a complex reflection group (also the Weyl group of $GL(\mathfrak{h})$). So $W$ acts diagonally on $\mathfrak{h}\oplus \mathfrak{h}^*$. 
 Let $S$ be the set of all symplectic reflections in $W$ and let $H_s$ be the reflecting hyperplane of $s$, $\alpha_s\in \mathfrak{h}^*$ such that the kernel of $\alpha_s$ is $H_s$, and $\alpha_s^{\vee}\in \mathfrak{h}$ such that $(\alpha_s^{\vee},\alpha_s)=1-\det(s)$. 
Let $\mathbf{c}:S\rightarrow \mathbb{C}$ be the conjugacy invariant function as before. 
We simplify Equation~\eqref{eq:symplectic-reflection-algebra-defn} for rational Cherednik algebras as follows: 
\[  
H_{t,\mathbf{c}} = TV^*\rtimes W/\langle [x_i,x_j]=0, [y_i,y_j]=0,  [y,x]=t(y,x) -2\sum_{s\in S}\dfrac{\mathbf{c}(s)}{1-\det(s)} (y,\alpha_s)(\alpha_s^{\vee},x)s  \rangle,
\] 
where $t\in \mathbb{C}$, $x,x_i,x_j\in \mathfrak{h}^*$ and $y,y_i,y_j\in \mathfrak{h}$.

In type $A$, the presentation of rational Cherednik algebras is given as follows: 
let $\mathfrak{h}=\mathbb{C}^n$. Then $W= S_n$, the symmetric group on $n$ letters. Then $S$ becomes the set of transpositions $s_{ij} = (i\:\:j)$. 
Then 
\begin{equation*}
\begin{aligned}
\sigma\cdot x_i &= x_{\sigma(i)}\sigma ,  \\ 
\alpha_{s_{ij}} &= x_i -x_j,    \\ 
[x_i,x_j]&=0 ,  \\ 
[y_i,x_j] &=\mathbf{c}s_{ij} \mbox{ for all }1\leq i < j\leq n \\ 
\end{aligned}
\hspace{8mm}
\begin{aligned}
\sigma \cdot y_i &= y_{\sigma(i)}\sigma \mbox{ for all } \sigma \in S_n, \\ 
\alpha_{s_{ij}}^{\vee} &= y_i-y_j \mbox{ for } 1\leq i< j\leq n, \\ 
[y_i,y_j] &= 0 \mbox{ for all }i < j,  \\ 
[y_i, x_i] &= t - \mathbf{c} \sum_{j\not= i} s_{ij} \mbox{ for all } 1\leq i\leq n, \\ 
\end{aligned}
\end{equation*}
where $t,\mathbf{c}\in \mathbb{C}$. 
We have an embedding of the spherical subalgebra $eH_{t,\mathbf{c}}e\subseteq H_{t,\mathbf{c}}$ in the rational Cherednik algebra, which in turn contains the following two subalgebras: 
\[ (\Sym\mathfrak{h})^{S_n} =\mathbb{C}[y_1,\ldots, y_n]^{S_n}\hookrightarrow 
eH_{t,\mathbf{c}}e, \hspace{4mm}
\mathbb{C}[\mathfrak{h}]^{S_n}=\mathbb{C}[x_1,\ldots, x_n]^{S_n} \hookrightarrow eH_{t,\mathbf{c}}e \hspace{2mm}\mbox{ via }
\hspace{2mm}
a\mapsto a\cdot e = e\cdot a, 
\] 
where the algebras $(\Sym\mathfrak{h})^{S_n} $ and $\mathbb{C}[\mathfrak{h}]^{S_n}$ generate $eH_{t,\mathbf{c}}e$ as an algebra.   
Furthermore, we have 
\[ 
\gr eH_{t,\mathbf{c}}e = \mathbb{C}[\mathfrak{h}\times \mathfrak{h}^*]^{S_n}
\] 
by PBW. 

Let $\mathcal{D}(\mathfrak{b}\times \mathbb{P},c)$ be the algebra of $c$-twisted differential operators on $\mathfrak{b}\times \mathbb{P}$ and let $\mathfrak{b}_c := \im(\mathfrak{b}\cap \mathfrak{sl}(V)\rightarrow \mathcal{D}(\mathfrak{b}\times \mathbb{P},c))$. 
We end this section with a conjecture analogous to the classical Harish-Chandra homomorphisms. 
\begin{conjecture}\label{conj:Cherednik-algebras-diff-operators}
There exists a subalgebra $H_{1,\mathbf{c}}'\subseteq H_{1,\mathbf{c}}$ such that
\[ 
\Big(\mathcal{D}(\mathfrak{b}\times \mathbb{P},c)/\mathcal{D}(\mathfrak{b}\times \mathbb{P},c)\cdot \mathfrak{b}_c\Big)^{\ad \mathfrak{b}_c} \stackrel{\simeq}{\longrightarrow} e H_{1,\mathbf{c}}' e
\]  
and 
\[ 
\gr \Big(\mathcal{D}(\mathfrak{b}\times \mathbb{P},c)/\mathcal{D}(\mathfrak{b}\times \mathbb{P},c)\cdot \mathfrak{b}_c\Big)^{\ad \mathfrak{b}_c} \stackrel{\simeq}{\longrightarrow} \mathbb{C}[V]^H,  
\]  
where $H=B\cap SL(V)$.  
\end{conjecture}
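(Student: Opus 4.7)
The plan is to adapt the Harish-Chandra radial parts technique of Etingof-Ginzburg and Gan-Ginzburg from the reductive setting $(G,\mathfrak{g})$ to the Borel setting $(B,\mathfrak{b})$. In the classical (graded) setting, the quantized Hamiltonian reduction of $\mathcal{D}(\mathfrak{g}\times\mathbb{P},c)$ by $\mathfrak{g}_c$ recovers $eH_{1,\mathbf{c}}e$ because the reduction of $T^*(\mathfrak{g}\times V)$ by $G$ models the Calogero-Moser space. In our Borel setting, the reduction model is provided by Proposition~\ref{prop:Im-rss-locus} together with Theorem~\ref{thm:birationality-Y/B-IsoHilb}, which states that $\mu_B^{-1}(0)^{rss}/\!\!/B$ is the coordinate ring of an open subscheme of $\mathbb{C}^{2n}\setminus \Delta_n$ that is birational to the isospectral Hilbert scheme. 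This is exactly the classical shadow of $\mathbb{C}[V]^H$ on the regular semisimple locus.

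First, I would construct a map
\[
\Phi_{\mathbf{c}}:\Bigl(\mathcal{D}(\mathfrak{b}\times\mathbb{P},c)/\mathcal{D}(\mathfrak{b}\times\mathbb{P},c)\cdot\mathfrak{b}_c\Bigr)^{\ad\mathfrak{b}_c}\longrightarrow eH_{1,\mathbf{c}}e
\]
by restricting $\ad\mathfrak{b}_c$-invariant $c$-twisted differential operators to the regular semisimple locus $\mathfrak{b}^{rss}\subseteq\mathfrak{b}$ and conjugating by an appropriate power of a product of root differences $\Delta^{c}$, producing Dunkl-type operators. On the regular semisimple locus any $r\in\mathfrak{b}$ is diagonalizable under $B$, so the residual action is that of the torus $T\subseteq B$, which matches the local picture used in the reductive Harish-Chandra homomorphism, and the twisted differential operators acquire precisely the prescribed rational singularities along the root hyperplanes.

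Second, I would identify the image. Because the Borel $B$ has no normalizer action by the Weyl group $S_n$ (a standard basis of $\mathbb{C}^n$ is fixed once and for all in the excerpt, breaking any symmetry among coordinates), the image of $\Phi_{\mathbf{c}}$ does not contain the $S_n$-antisymmetric combinations that appear in the reductive case. The subalgebra $H_{1,\mathbf{c}}'$ should therefore be the one generated by the coordinate functions $x_i$, the Dunkl operators $y_i$, and the Cartan-like elements, with the $\mathbb{C}S_n$-factor replaced by the subalgebra of upper-triangular idempotents; equivalently, it is the subalgebra whose associated graded is $\mathbb{C}[V]^H$ with $H=B\cap SL(V)$. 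Closedness of $H_{1,\mathbf{c}}'$ under the defining relations of $H_{1,\mathbf{c}}$ should be verified by checking the commutators $[y_i,x_j]$ do not leak outside this subalgebra. A PBW argument (which is available for $H_{1,\mathbf{c}}$) then shows that $eH_{1,\mathbf{c}}'e$ is a flat deformation of $\mathbb{C}[V]^H$, giving the second displayed isomorphism in Conjecture~\ref{conj:Cherednik-algebras-diff-operators} at the graded level.

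Finally, to upgrade the graded isomorphism to the quantum one, I would run a standard filtered-algebra argument: both sides carry compatible filtrations (the PBW filtration on the right, and the order filtration on twisted differential operators on the left), the graded map is identified with the classical statement above, and any filtration-preserving map whose associated graded is an isomorphism of graded algebras is itself an isomorphism. The main obstacle is twofold. The first difficulty is pinning down $H_{1,\mathbf{c}}'$ precisely, since in the reductive case the $S_n$-symmetry comes free from the Weyl group inside $G$, whereas in the Borel case one must isolate the correct \emph{asymmetric} analogue — most likely a quotient or subalgebra related to the degenerate affine Hecke algebra or to the polynomial representation of $H_{1,\mathbf{c}}$. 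The second, more serious, difficulty is that Proposition~\ref{prop:Im-rss-locus} controls only the regular semisimple locus, while the conjecture is stated over the full reduced space; extending from $\mu_B^{-1}(0)^{rss}$ to $\mu_B^{-1}(0)$ requires either a codimension-two normality argument for the closure or a direct analysis of the degeneration, and this is precisely the open problem flagged at the end of Section~\ref{section:geometry}.
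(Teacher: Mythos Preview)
The statement you are addressing is a \emph{conjecture} in the paper, not a theorem: the paper gives no proof whatsoever and explicitly writes, immediately after stating it, ``We leave it as a part of our near future work to investigate the quantization of the Hamiltonian reduction in the filtered quiver representation setting.'' There is therefore no paper proof to compare your proposal against.

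Your write-up is not a proof but a strategy sketch, and you correctly flag its own gaps at the end. Two points deserve emphasis. First, the identification of $H_{1,\mathbf{c}}'$ is genuinely the crux: your description (``generated by $x_i$, Dunkl operators $y_i$, Cartan-like elements, with $\mathbb{C}S_n$ replaced by upper-triangular idempotents'') is not yet a definition, and the claim that the commutators $[y_i,x_j]=\mathbf{c}\,s_{ij}$ do not ``leak outside'' such a subalgebra is exactly what fails for the most na\"ive candidates, since $s_{ij}$ is a full transposition. Until $H_{1,\mathbf{c}}'$ is pinned down so that it is closed under these relations and has the correct associated graded $\mathbb{C}[V]^H$, the filtered-to-graded argument in your last paragraph has nothing to act on. Second, the passage from the regular semisimple locus to the whole reduction is, as you say, precisely the open problem the paper highlights after Theorem~\ref{thm:birationality-Y/B-IsoHilb}; invoking a ``codimension-two normality argument'' presupposes normality of $\mu_B^{-1}(0)/\!\!/B$, which is not established anywhere in the paper. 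So your outline is a reasonable roadmap in the spirit of Etingof--Ginzburg and Gan--Ginzburg, but it does not close either of the two gaps that make the statement a conjecture rather than a theorem.
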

We leave it as a part of our near future work to investigate the quantization of the Hamiltonian reduction in the filtered quiver representation setting. 



\section{Future directions}\label{section:applications-future-work} 
The coordinate ring $\mathbb{C}[Rep(Q,\beta)]$ has two gradings. 
One way is called {\em $Q_1$-grading}, where the ring is graded by $\mathbb{Z}^{Q_1}$ 
since the quiver variety $Rep(Q,\beta)$ $=$ $ \displaystyle{\bigoplus_{a\in Q_1}M_{\beta(ha)\times \beta(ta)}(\mathbb{C})}$ is a product of matrices.  The second way is called {\em $Q_0$-grading}, where the ring is graded by $\mathbb{Z}^{Q_0}$. To explain this further, 
there exists a natural action of $GL_{\beta}(\mathbb{C}) := \mathbb{G}_{\beta} =
\displaystyle{\prod_{i\in Q_0}GL_{\beta_i}(\mathbb{C})}$ on $Rep(Q,\beta)$ which induces an action on the ring $\mathbb{C}[Rep(Q,\beta)]$. 
So $\displaystyle{\prod_{i\in Q_0}\mathbb{C}^*}$ acts on $\mathbb{C}[Rep(Q,\beta)]$ via the characters of the group, where 
$\mathbb{C}^*=\mathbb{C}\setminus \{ 0\}$. Thus, we can decompose the ring as a direct sum of weight spaces for the action of 
$\displaystyle{\prod_{i\in Q_0}\mathbb{C}^*}$. Let $\mathbb{C}[Rep(Q,\beta)]^{GL_{\beta}(\mathbb{C}),\bullet}$ $:=$ 
$\displaystyle{\bigoplus_{\chi} \mathbb{C}[Rep(Q,\beta)]^{GL_{\beta}(\mathbb{C}),\chi}}$, where $\chi$ is a character of $GL_{\beta}(\mathbb{C})$.  Then polynomials $f\in \mathbb{C}[Rep(Q,\beta)]^{GL_{\beta}(\mathbb{C}),\bullet}$ 
are homogeneous with respect to the $Q_0$-grading.

A polynomial $f\in \mathbb{C}[Rep(Q,\beta)]$ is an invariant polynomial if $g.f=f$ for all $g\in GL_{\beta}(\mathbb{C})$, and the polynomial $f$ is $\chi$-semi-invariant if $g.f=\chi(g)f$ for all $g\in GL_{\beta}(\mathbb{C})$, where $\chi:GL_{\beta}(\mathbb{C})\longrightarrow\mathbb{C}^*$ is a group homomorphism. 
Semi-invariants under the $GL_{\beta}(\mathbb{C})$-action are invariants for $SL_{\beta}(\mathbb{C})$ $:=$ 
$\displaystyle{\prod_{i\in Q_0}SL_{\beta_i}(\mathbb{C})}$-action and 
$SL_{\beta}(\mathbb{C})$-invariant polynomials that are homogeneous with respect to the $Q_0$-grading are also semi-invariant (for some $\chi$) for the $GL_{\beta}(\mathbb{C})$-action.  
Therefore, $\mathbb{C}[Rep(Q,\beta)]^{GL_{\beta}(\mathbb{C}),\bullet} \cong  \mathbb{C}[Rep(Q,\beta)]^{SL_{\beta}(\mathbb{C})}$. 
In the literature, one writes 
$SI(Rep(Q,\beta))$ to mean $\mathbb{C}[Rep(Q,\beta)]^{SL_{\beta}(\mathbb{C})}$. 
Earlier works in the study of invariants of quiver representations include \cite{MR557581}, \cite{MR1113382} and \cite{MR1162487}, with techniques given in 
\cite{MR1758750},  
\cite{MR1825166}, and \cite{MR1908144}.

Other future directions include taking the study of filtered representations to construct GIT quotients to study wall-crossing under the variation of various characters.  
Just as the geometry of $\mathfrak{p}/P$ or $\mathfrak{p}/U$ is interesting, one could generalize this space to $F^{\bullet}Rep(Q,\beta)/\mathbb{P}_{\beta}$, $F^{\bullet}Rep(Q,\beta)/\mathbb{U}$, or their corresponding (quantum) Hamiltonian reduction setting, generalizing the Springer resolution  $T^*(G/B)\twoheadrightarrow \mathcal{N}$. 
The study of the Schubert varieties, vector bundles, or derived categories of coherent sheaves on these varieties 
\[
\xymatrix@-1pc{
\widetilde{\mathcal{N}} \ar@{^{(}->}[rr]  \ar@{->>}[dd] &  & \widetilde{\mathfrak{g}} \ar@{->>}[dd] \\ 
& & \\ 
\mathcal{N} \ar@{^{(}->}[rr] & & \mathfrak{g} \\ 
}
\] 
is rich with many connections to quiver Hecke (KLR) algebras, Hochschild homology of Soergel bimodules, Khovanov-Rozansky homology of a torus knot, and modular representation theory.








   

%



\bibliography{KLR-algebras-Im}

\def\cprime{$'$} \def\cprime{$'$}
\providecommand{\bysame}{\leavevmode\hbox to3em{\hrulefill}\thinspace}
\providecommand{\MR}{\relax\ifhmode\unskip\space\fi MR }
\providecommand{\MRhref}[2]{%
  \href{http://www.ams.org/mathscinet-getitem?mr=#1}{#2}
}
\providecommand{\href}[2]{#2}
\begin{thebibliography}{SvdB01}

\bibitem[BLP11]{rep-theory-infinite-quiver}
Raymundo Bautista, Shiping Liu, and Charles Paquette, \emph{Representation
  theory of an infinite quiver}, \url{https://arxiv.org/abs/1109.3176}, 2011.

\bibitem[Bri08]{Brion-rep-of-quivers}
Michel Brion, \emph{Representations of quivers},
  \url{http://www-fourier.ujf-grenoble.fr/~mbrion/notes_quivers_rev.pdf}, 2008.

\bibitem[CB92]{Crawley-Boevey-rep-quivers}
William Crawley-Boevey, \emph{Lectures on representations of quivers},
  \url{http://www.maths.leeds.ac.uk/~pmtwc/quivlecs.pdf}, 1992.

\bibitem[CB01]{MR1834739}
\bysame, \emph{Geometry of the moment map for representations of quivers},
  Compositio Math. \textbf{126} (2001), no.~3, 257--293.

\bibitem[CG10]{MR2838836}
Neil Chriss and Victor Ginzburg, \emph{Representation theory and complex
  geometry}, Modern Birkh\"auser Classics, Birkh\"auser Boston Inc., Boston,
  MA, 2010, Reprint of the 1997 edition.

\bibitem[Che55]{MR0072877}
Claude Chevalley, \emph{Invariants of finite groups generated by reflections},
  Amer. J. Math. \textbf{77} (1955), 778--782.

\bibitem[Cra11]{MR2772068}
Alastair Craw, \emph{Quiver flag varieties and multigraded linear series}, Duke
  Math. J. \textbf{156} (2011), no.~3, 469--500.

\bibitem[DR04]{MR2078342}
J.~Matthew Douglass and Gerhard R{\"o}hrle, \emph{The geometry of generalized
  {S}teinberg varieties}, Adv. Math. \textbf{187} (2004), no.~2, 396--416.

\bibitem[DR09]{MR2510045}
\bysame, \emph{The {S}teinberg variety and representations of reductive
  groups}, J. Algebra \textbf{321} (2009), no.~11, 3158--3196.

\bibitem[DW00]{MR1758750}
Harm Derksen and Jerzy Weyman, \emph{Semi-invariants of quivers and saturation
  for {L}ittlewood-{R}ichardson coefficients}, J. Amer. Math. Soc. \textbf{13}
  (2000), no.~3, 467--479 (electronic).

\bibitem[DZ01]{MR1825166}
M.~Domokos and A.~N. Zubkov, \emph{Semi-invariants of quivers as determinants},
  Transform. Groups \textbf{6} (2001), no.~1, 9--24.

\bibitem[EG02]{MR1881922}
Pavel Etingof and Victor Ginzburg, \emph{Symplectic reflection algebras,
  {C}alogero-{M}oser space, and deformed {H}arish-{C}handra homomorphism},
  Invent. Math. \textbf{147} (2002), no.~2, 243--348.

\bibitem[EGL15]{MR3336085}
Pavel Etingof, Eugene Gorsky, and Ivan Losev, \emph{Representations of rational
  {C}herednik algebras with minimal support and torus knots}, Adv. Math.
  \textbf{277} (2015), 124--180.

\bibitem[GG06]{MR2210660}
Wee~Liang Gan and Victor Ginzburg, \emph{Almost-commuting variety,
  $\mathscr{D}$-modules, and {C}herednik algebras}, IMRP Int. Math. Res. Pap.
  (2006), 26439, 1--54, With an appendix by Ginzburg.

\bibitem[Gin09]{Ginzburg-Nakajima-quivers}
Victor Ginzburg, \emph{Lectures on {N}akajima's quiver varieties},
  \url{http://arxiv.org/pdf/0905.0686v2}, 2009.

\bibitem[GK04]{MR2065506}
Victor Ginzburg and Dmitry Kaledin, \emph{Poisson deformations of symplectic
  quotient singularities}, Adv. Math. \textbf{186} (2004), no.~1, 1--57.

\bibitem[Gor10]{MR2827838}
Iain~G. Gordon, \emph{Rational {C}herednik algebras}, Proceedings of the
  {I}nternational {C}ongress of {M}athematicians. {V}olume {III}, Hindustan
  Book Agency, New Delhi, 2010, pp.~1209--1225.

\bibitem[Gro97]{MR1489234}
Frank~D. Grosshans, \emph{Algebraic homogeneous spaces and invariant theory},
  Lecture Notes in Mathematics, vol. 1673, Springer-Verlag, Berlin, 1997.

\bibitem[GS05]{MR2183255}
I.~Gordon and J.~T. Stafford, \emph{Rational {C}herednik algebras and {H}ilbert
  schemes}, Adv. Math. \textbf{198} (2005), no.~1, 222--274.

\bibitem[GS06]{MR2219255}
\bysame, \emph{Rational {C}herednik algebras and {H}ilbert schemes. {II}.
  {R}epresentations and sheaves}, Duke Math. J. \textbf{132} (2006), no.~1,
  73--135.

\bibitem[Hai01]{MR1839919}
Mark Haiman, \emph{Hilbert schemes, polygraphs and the {M}acdonald positivity
  conjecture}, J. Amer. Math. Soc. \textbf{14} (2001), no.~4, 941--1006
  (electronic).

\bibitem[Im12]{Im-rss-locus-GS-resolution}
Mee~Seong Im, \emph{The regular semisimple locus of the affine quotient of the
  cotangent bundle of the {G}rothendieck-{S}pringer resolution},
  \url{http://arxiv.org/abs/1607.02208}, 2012.

\bibitem[Im14a]{Im-doctoral-thesis}
\bysame, \emph{On semi-invariants of filtered representations of quivers and
  the cotangent bundle of the enhanced {G}rothendieck-{S}pringer resolution},
  \url{https://www.ideals.illinois.edu/handle/2142/49392}, 2014.

\bibitem[Im14b]{Im-filtered-semi-invariants}
\bysame, \emph{Semi-invariants of filtered quiver representations with at most
  two pathways}, \url{http://arxiv.org/pdf/1409.0702.pdf}, 2014.

\bibitem[Kac80]{MR557581}
V.~G. Kac, \emph{Infinite root systems, representations of graphs and invariant
  theory}, Invent. Math. \textbf{56} (1980), no.~1, 57--92.

\bibitem[Kho07]{MR2339573}
Mikhail Khovanov, \emph{Triply-graded link homology and {H}ochschild homology
  of {S}oergel bimodules}, Internat. J. Math. \textbf{18} (2007), no.~8,
  869--885.

\bibitem[Kin94]{MR1315461}
A.~D. King, \emph{Moduli of representations of finite-dimensional algebras},
  Quart. J. Math. Oxford Ser. (2) \textbf{45} (1994), no.~180, 515--530.

\bibitem[KL08]{Khovanov-Lauda-III}
Mikhail Khovanov and Aaron~D. Lauda, \emph{A diagrammatic approach to
  categorification of quantum groups {III}}, arXiv preprint arXiv:0807.3250
  (2008).

\bibitem[KL09]{MR2525917}
\bysame, \emph{A diagrammatic approach to categorification of quantum groups.
  {I}}, Represent. Theory \textbf{13} (2009), 309--347.

\bibitem[KL11]{MR2763732}
\bysame, \emph{A diagrammatic approach to categorification of quantum groups
  {II}}, Trans. Amer. Math. Soc. \textbf{363} (2011), no.~5, 2685--2700.

\bibitem[Kon03]{MR2062626}
Maxim Kontsevich, \emph{Deformation quantization of {P}oisson manifolds}, Lett.
  Math. Phys. \textbf{66} (2003), no.~3, 157--216.

\bibitem[KV15]{Kazhdan-Varshavsky}
David Kazhdan and Yakov Varshavsky, \emph{Geometric approach to parabolic
  induction}, \url{http://arxiv.org/abs/1504.07859}, 2015.

\bibitem[Lus84]{MR732546}
G.~Lusztig, \emph{Intersection cohomology complexes on a reductive group},
  Invent. Math. \textbf{75} (1984), no.~2, 205--272.

\bibitem[Lus97]{MR1472321}
\bysame, \emph{Constructible functions on the {S}teinberg variety}, Adv. Math.
  \textbf{130} (1997), no.~2, 287--310.

\bibitem[Nak99]{MR1711344}
Hiraku Nakajima, \emph{Lectures on {H}ilbert schemes of points on surfaces},
  University Lecture Series, vol.~18, American Mathematical Society,
  Providence, RI, 1999.

\bibitem[Nam08]{MR2426349}
Yoshinori Namikawa, \emph{Flops and {P}oisson deformations of symplectic
  varieties}, Publ. Res. Inst. Math. Sci. \textbf{44} (2008), no.~2, 259--314.

\bibitem[Nev11]{Nevins-GSresolutions}
Thomas Nevins, \emph{Stability and {H}amiltonian reduction for
  {G}rothendieck-{S}pringer resolutions},
  \url{www.math.illinois.edu/~nevins/papers/b-hamiltonian-reduction-2011-0316.pdf},
  2011.

\bibitem[Oh16]{Se-jin-Oh}
Se-jin Oh, \emph{Auslander-{R}eiten quiver and representation theories related
  to {KLR}-type {S}chur-{W}eyl duality}, \url{http://arxiv.org/abs/1509.04949},
  2016.

\bibitem[ORS12]{Oblomkov-Rasmussen-Shende-HilbScheme-HOMFLY}
Alexei Oblomkov, Jacob Rasmussen, and Vivek Shende, \emph{The {H}ilbert scheme
  of a plane curve singularity and the {HOMFLY} homology of its link}, arXiv
  preprint arXiv:1201.2115v1 (2012).

\bibitem[Rei13]{MR3102955}
Markus Reineke, \emph{Every projective variety is a quiver {G}rassmannian},
  Algebr. Represent. Theory \textbf{16} (2013), no.~5, 1313--1314.

\bibitem[Rou12]{MR2908731}
Rapha{\"e}l Rouquier, \emph{Quiver {H}ecke algebras and 2-{L}ie algebras},
  Algebra Colloq. \textbf{19} (2012), no.~2, 359--410.

\bibitem[Sch91]{MR1113382}
Aidan Schofield, \emph{Semi-invariants of quivers}, J. London Math. Soc. (2)
  \textbf{43} (1991), no.~3, 385--395.

\bibitem[Sch92]{MR1162487}
\bysame, \emph{General representations of quivers}, Proc. London Math. Soc. (3)
  \textbf{65} (1992), no.~1, 46--64.

\bibitem[Ser68]{MR0234953}
Jean-Pierre Serre, \emph{Groupes finis d'automorphismes d'anneaux locaux
  r\'eguliers}, Colloque d'{A}lg\`ebre ({P}aris, 1967), {E}xp. 8, Secr\'etariat
  math\'ematique, Paris, 1968, p.~11.

\bibitem[ST54]{MR0059914}
G.~C. Shephard and J.~A. Todd, \emph{Finite unitary reflection groups},
  Canadian J. Math. \textbf{6} (1954), 274--304.

\bibitem[SvdB01]{MR1908144}
Aidan Schofield and Michel van~den Bergh, \emph{Semi-invariants of quivers for
  arbitrary dimension vectors}, Indag. Math. (N.S.) \textbf{12} (2001), no.~1,
  125--138.

\bibitem[VV11]{MR2837011}
M.~Varagnolo and E.~Vasserot, \emph{Canonical bases and {KLR}-algebras}, J.
  Reine Angew. Math. \textbf{659} (2011), 67--100.

\bibitem[Zel02]{Zelikson-Shmuel-AR-quivers}
Shmuel Zelikson, \emph{Auslander-{R}eiten quivers and the {C}oxeter complex},
  \url{https://arxiv.org/abs/math/0208098}, 2002.

\end{thebibliography}

\appendix

\end{document}